 \font\gothic=eufm10 scaled 1100
\newtheorem{theorem}{\bf{Theorem}}[section]
\newtheorem{corollary}[theorem]{\bf{Corollary}}
\newtheorem{proposition}[theorem]{\bf{Proposition}}
\theoremstyle{definition}
\newtheorem{example}[theorem]{\bf{Example}}
\newtheoremstyle{remark}{\topsep}{\topsep}
{}
{}
{\bfseries}
{.}
{ }
{\thmname{#1}\thmnumber{ #2}\thmnote{ #3}}
\theoremstyle{definition}
\newtheorem{remark}[theorem]{\bf{Remark}}
\theoremstyle{definition}
\newtheorem{definition}[theorem]{\bf{Definition}}
\def \Hom{\operatorname{Hom}}
\def \Id{\operatorname{Id}}
\def \k{\operatorname{k}}
\def\csum{\ensuremath{\mbox{{\gothic S}}}}
\def\tr{\rhd}
\def\vartr{\RHD} 
\def\ctr{\Yright} 
\def\Bplus{\operatorname{B}^+}
\def\Bminus{\operatorname{B}^-}
\def\D{\operatorname{D}}
\def\ass{\operatorname{ass}}
\begin{document}


\thispagestyle{empty}

\title{On post-Lie algebras, Lie--Butcher series and moving frames}

\date{}

\author{Hans Z. Munthe-Kaas\footnote{\small Department of Mathematics, University of Bergen, Norway. {\small{Email: hans.munthe-kaas@math.uib.no}}} \and Alexander Lundervold\footnote{\small Inria Bordeaux Sud-Ouest, France. {\small{Email: alexander.lundervold@gmail.com}}}}




\maketitle
\noindent{\bf Mathematics subject classification:} 65L, 53C, 16T.\ \\

\noindent{\bf Keywords:} B-series, Combinatorial Hopf algebras, connections, homogeneous spaces, Lie group integrators, Lie--Butcher series, moving frames, post-Lie algebras, post-Lie algebroids, pre-Lie algebras, rooted trees.


\begin{abstract} 
\emph{Pre-Lie}  (or Vinberg) algebras arise from flat and torsion-free connections on differential manifolds. They have been extensively studied in recent years, both from algebraic operadic points of view and through numerous applications in numerical analysis, control theory, stochastic differential equations and renormalization. Butcher series are formal power series founded on pre-Lie algebras, used in numerical analysis to study geometric properties of flows on euclidean spaces. Motivated by the analysis of flows on manifolds and homogeneous spaces, we investigate algebras arising from flat connections with constant torsion, leading to the definition of \emph{post-Lie} algebras, a generalization of pre-Lie algebras. Whereas pre-Lie algebras are intimately associated with euclidean geometry, post-Lie algebras occur naturally in the differential geometry of homogeneous spaces, and are also closely related to Cartan's method of moving frames. Lie--Butcher series combine Butcher series with Lie series and are used to analyze flows on manifolds. In this paper we show that Lie--Butcher series are founded on post-Lie algebras. The functorial relations between post-Lie algebras and their enveloping algebras, called D-algebras, are explored. Furthermore, we develop new formulas for computations in free post-Lie algebras and D-algebras, based on recursions in a magma, and we show that Lie--Butcher series are related to invariants of curves described by moving frames.
\end{abstract}




\section{Introduction} 
In 1857 Arthur Cayley published a remarkable paper~\cite{cayley1857taf} introducing the idea of using trees to describe differential operators. This became a fundamental tool in the analysis of numerical flows through the seminal work of John Butcher~\cite{butcher1963cft,butcher1972aat} in the 1960s, where he introduced the Butcher group, or, in modern language, the character group of the Butcher--Connes--Kreimer Hopf algebra. In 1963 \emph{pre-Lie algebras} appeared simultaneously from two different sources, Vinberg \cite{vinberg1963chc} from differential geometry (classification of homogeneous cones) and Gerstenhaber \cite{gerstenhaber1963tcs} from algebra (Hochschild cohomology). Pre-Lie algebras later appeared in control theory under the name \emph{chronological algebras} \cite{agrachev1981chronological}. 

B-series, named after John Butcher in~\cite{hairer1974butcher}, are formal infinite series used to study properties of numerical integration algorithms on a euclidean space $V$. Over the last decades, B-series have evolved into powerful algebraic tools used for studying a variety of geometric properties of flows, such as symplecticity, volume and energy preservation, preservation of first integrals and backward error analysis. In the late 1990s similar structures appeared in the combinatorial approach to renormalisation by  Connes and Kreimer~\cite{connes1998har}.  In \cite{brouder2000rkm}, Christian Brouder pointed out connections between the theory of Connes--Kreimer and numerical analysis. 

A B-series is indexed over the set of rooted trees (non-planar trees, where the order of the branches is neglected),
${\mathcal T} = \left\{\ab, \aabb,\aaabbb, \aababb, \aaaabbbb,\aaababbb,\aaabbabb, \aabababb,\ldots
\right\}$.
For a given vector field $f\colon V\rightarrow V$ and a real valued function $\alpha\colon {\mathcal T}\rightarrow \RR$, a B-series is defined as
\begin{equation}\label{eq:bser}{\mathcal B}_f(\alpha) =  \sum_{\tau\in {\mathcal T}}\alpha(\tau)\F(\tau), \end{equation}
where
 $\F(t)$, called the \emph{elementary differentials},  are vector fields on $V$ defined recursively for all  $\tau\in\mathcal T$, starting with $\F(\ab) = f$. The recursive extension of $\F$ to all of $\mathcal T$ can be obtained from the theory of pre-Lie algebras.
 
Pre-Lie algebras provide an axiomatic foundation of B-series. A pre-Lie algebra is an abstract algebra $\{A,\tr\}$
with a product $\tr$ satisfying a skew associator relationship given in~(\ref{eq:prelie}). 
The free pre-Lie algebra~\cite{chapoton2001pla} is the algebra spanned by all rooted trees, where the product $\tr$
is given as tree-grafting: attach the root of the left tree in all possible ways to the nodes of the right tree and sum everything. An example is
\begin{equation*}
  \aabb\tr\aababb=\aaabbababb +\aaaabbbabb+\aabaaabbbb=\aaabbababb +2\aaaabbbabb.
\end{equation*}
As we will detail in Section~\ref{subsec:conn}, the set of vector fields on a euclidean space also forms a pre-Lie algebra, with the product $x\tr y:= \nabla_x y$, where $\nabla$ is a flat and torsion free connection associated with the euclidean structure.
In this light, a B-series is an instance of a formal infinite series in a pre-Lie algebra, and the elementary differential map $\F$ is the universal morphism from the free pre-Lie algebra to the pre-Lie algebra of euclidean vector spaces, defined by the recursion $\F(\ab) = f$, $\F(\tau\tr \tau') = {\F(\tau)}\tr\F(\tau')$.

During the 1990s numerical integration was generalized from euclidean spaces to manifolds~\cite{crouch1993nio,munthe-kaas1995lbt,munthe-kaas1998rkm,owren1999rkm}. In this work it was necessary to introduce a generalization of B-series to manifolds, called Lie--Butcher series (LB-series). Inspired by the unexpected connections between 
numerical analysis and renormalization, the algebraic foundations of LB-series were investigated in several papers in the last 
decade~\cite{berland2005aso,lundervold12oas,lundervold2009hao,munthe-kaas2003oep,munthe-kaas2008oth}. A Hopf algebra unifying the Butcher--Connes--Kreimer and the shuffle Hopf algebra of a free Lie algebra emerged. This Hopf algebra is defined on the linear space spanned by \emph{forests} of planar trees. The underlying algebraic structure was termed a D-algebra, which is a generalization of the \emph{enveloping algebras} of  pre-Lie algebras. However, the definition of this underlying algebraic structure and the exact understanding of D-algebras as an universal enveloping algebra of  `something', has so-far been an open problem. The present paper rectifies this.

The study of `something' leads to the definition of \emph{post-Lie} algebras, first found by Bruno Vallette~\cite{vallette2007hog} in 2007 through the purely operadic technique of Koszul dualization. In this paper we show that post-Lie algebras also arise naturally from the differential geometry of homogeneous spaces and Klein geometries, topics that are closely related to Cartan's method of \emph{moving frames}. 
Applications of moving frames in computational mathematics have been pioneered by Peter Olver and his co-workers~\cite{fels1998moving,fels1999moving,olver1995equivalence}. 
We show that post-Lie algebras and LB-series are related to moving frame theory, and point to possible applications of moving frames in the design of numerical methods.
Furthermore, we pursue a detailed description of post-Lie algebras, their universal enveloping algebras and the free post-Lie algebra. Finally, we develop some new interpretations of LB-series in the light of moving frames. This leads to a lot of questions about the design of numerical algorithms, of both a theoretical and a practical nature. These are not discussed in this paper.

The paper is organized as follows. Section 2 motivates the definition of post-Lie algebras and presents post-Lie algebras from differential geometric points of view. Several concrete examples are examined, such as post-Lie algebras associated with Cartan-Schouten connections on Lie groups, post-Lie structures on vector bundles over homogeneous manifolds and relations to moving frames. 
In Section~\ref{sec:postliealgebroid} we define a structure we call \emph{post-Lie algebroid}, which generalizes all the specific examples from differential geometry.
Section 3 is devoted to algebraic properties of post-Lie algebras and related structures. Sections 2 and 3 can be read  independently of each other. Section 4 gives a brief presentation of LB-series, where the topics of Sections 2 and 3 are combined in a discussion of different ways of representing flows on manifolds in terms of their LB-series, and some final remarks on the relationship between LB-series, developments of curves and moving frames.

\section{Post-Lie algebras in differential geometry}
\subsection{Algebras of  connections}\label{subsec:conn}
In this section we motivate the abstract definition of pre-Lie, post-Lie and Lie-admissible algebras by considering algebras of vector fields originating from  connections on a manifold.
In the sequel, we assume that all algebras are vector spaces over a field $\k$ of characteristic 0, e.g.\ $\k\in \{\RR,\CC\}$. 

The most fundamental concept in differential geometry is \emph{connections}, defining parallel transport and covariant derivations. Connections appear in various abstractions, e.g.\ Koszul, Ehresmann and Cartan connections. To motivate pre-Lie, post-Lie and Lie-admissible algebras, it is sufficient to consider the simplest definition: a  Koszul connection on the tangent bundle. 

Let $\XM$ denote the smooth vector fields on a manifold $\M$. A \emph{Koszul connection}~\cite{spivak2005aci}
 is defined as a map $\nabla\colon \XM\times \XM \rightarrow\XM$ such that
 \begin{align*}\nabla_{fx} y &= f \nabla_x y \\
\nabla_x (fy) &= df(x) y + f\nabla_x y,
\end{align*}
for any $x,y\in \XM$ and scalar field $f$.
The connection defines a (non-commutative and non-associative) $\RR$-bilinear product on $\XM$, henceforth written as \[x\tr  y := \nabla_x y.\]
The \emph{torsion} of the connection is a skew-symmetric tensor $T\colon \TM\wedge \TM\rightarrow \TM$ defined as
\begin{equation}T(x,y) = x\tr y -y\tr x - \llbracket x,y\rrbracket_J ,\label{eq:torsion}\end{equation}
 where $\llbracket\cdot,\cdot\rrbracket_J$ denotes the Jacobi--Lie bracket of vector fields, defined such that  $\llbracket x,y\rrbracket_J(\phi) = x(y(\phi))-y(x(\phi))$ for all vector fields $x,y$ and scalar fields $\phi$.
The \emph{curvature tensor} $R\colon \TM\wedge \TM \rightarrow \mbox{End}(\TM)$ is defined as
\begin{equation}R(x,y)z = x\tr (y\tr z)- y\tr (x\tr z)- {\llbracket x,y\rrbracket_J}\tr z  = a_\tr(x,y,z)-a_\tr(y,x,z)+T(x,y)\tr z,\label{eq:curvature}
\end{equation}
where $a_\tr(x,y,z)$ is the (negative) \emph{associator}  of the product $\tr$, defined for any product $\ast$ as
\begin{equation} a_{\ast}(x,y,z) := x\ast (y\ast z)-(x\ast y)\ast z.\label{eq:associator}\end{equation}
The torsion (resp.\ curvature) is \emph{constant} if the covariant derivative vanishes, $\nabla T=0$ (resp.\ $\nabla R=0$).
\noindent The relationship between torsion and curvature is given by the Bianchi identities
\begin{eqnarray}
\csum(T(T(x,y),z)+(\nabla_xT)(y,z)) &=& \csum(R(x,y)z)\label{eq:bianchi1}\\
\csum((\nabla_xR)(y,z)+R(T(x,y),z)) & =& 0 ,\label{eq:bianchi2}
\end{eqnarray}
where $\csum$ denotes the sum over the three cyclic permutations of $(x,y,z)$.

\paragraph{Torsion free connection  $\Rightarrow$ Lie-admissible algebra.} If $T=0$ and $\nabla R=0$ then $R(x,y)z = a_\tr(x,y,z)-a_\tr(y,x,z)$, and the Bianchi identities reduce to a single equation
\begin{equation}\csum(a_\tr(x,y,z)-a_\tr(y,x,z))=0 .\label{eq:lie-adm}\end{equation} 
A general algebra $\{A,\ast\}$ with a product $\ast$ satisfying~(\ref{eq:lie-adm}) is called a \emph{Lie-admissible algebra}. Lie-admissible algebras are exactly those algebras that give rise to Lie algebras by skew-symmetrization of the product~\cite{goze2004laa}. In this example skew symmetrization yields the Jacobi--Lie bracket
\[ x\tr y-y\tr x = \llbracket x,y\rrbracket_J .\]
Any associative algebra is clearly Lie-admissible. A more general example
is pre-Lie algebras,  defined below.

\paragraph{Flat and torsion free connection $\Rightarrow$ Pre-Lie algebra.} Consider a connection which is both flat $R=0$ and torsion free $T=0$.  Equation~(\ref{eq:curvature}) implies that
\begin{equation}a_\tr(x,y,z)-a_\tr(y,x,z)=0 .\label{eq:prelie}
\end{equation}
A general algebra $\{A,\tr\}$ with a product $\tr$  satisfying~(\ref{eq:prelie}) is called a \emph{pre-Lie algebra} or a \emph{Vinberg algebra}.
Pre-Lie algebras appear in many settings. A fundamental algebraic result is that the free pre-Lie algebra can be described as the set of rooted trees with product given by grafting~\cite{chapoton2001pla}.

Note that a Riemannian manifold with a flat, torsion free connection is locally isometric to a euclidean space $\RR^n$ with the standard metric~\cite{spivak2005aci}, hence pre-Lie algebras are tightly associated with the differential geometry of euclidean spaces. For Lie groups and homogeneous spaces (Klein geometries), pre-Lie algebras are not sufficiently general to capture the basic differential geometry algebraically.

\paragraph{Flat and constant torsion connection $\Rightarrow$ Post-Lie algebra.}
Given a connection which is flat $R=0$ and has constant torsion $\nabla T=0$, then~(\ref{eq:bianchi1}) reduces to the Jacobi identity $\csum(T(T(x,y),z)) = 0$. Recall that a \emph{Lie bracket} is a skew symmetric bilinear form satisfying the Jacobi identity,  hence the torsion defines a Lie bracket (see Remark~\ref{rem:sign} about the negative sign)
\begin{equation}[x,y] := -T(x,y).\end{equation}
Note that this \emph{torsion bracket} is related to the Jacobi-Lie bracket by~(\ref{eq:torsion}).

The covariant derivation formula $
\nabla_x (T(y,z)) = (\nabla_x T)(y,z)+T(\nabla_x y,z)+T(y,\nabla_x z)$ together with $\nabla_x T=0$ imply
\begin{equation}x\tr [y,z] = [x\tr y, z]+[y,x\tr z] .\label{eq:postlie1}\end{equation}
On the other hand, (\ref{eq:curvature}) together with $R=0$ imply
\begin{equation}[x,y]\tr z = a_\tr(x,y,z)-a_\tr(y,x,z) .\label{eq:postlie2}\end{equation}
This motivates the general abstract definition of a \emph{post-Lie algebra}.
\begin{definition}A post-Lie algebra $\{\A,[\cdot,\cdot],\tr\}$ is a Lie algebra $\{\A,[\cdot,\cdot]\}$ together with a product $\tr\colon \A\times\A\rightarrow \A$ 
such that~(\ref{eq:postlie1})-(\ref{eq:postlie2}) hold. We call $[\cdot,\cdot]$ the \emph{torsion} and $\tr$ the \emph{connection} of the post-Lie algebra.
\end{definition}

\noindent{\bf Note:} Henceforth $\tr$ and $[\cdot,\cdot]$ denote these abstractly defined torsion and connection on a post-Lie algebra, of which the  examples of torsion and connections on a manifolds are concrete special cases.

\begin{remark}\label{rem:sign}In many applications one may naturally obtain~(\ref{eq:postlie2}) with opposite sign
\[[x,y]\tr z = a_\tr(y,x,z)-a_\tr(x,y,z).\]
We could have defined \emph{left} and \emph{right} post-Lie algebras according to these sign changes. This terminology  makes sense e.g.\ in the case of the Maurer--Cartan form on a Lie group.
However, the sign in~(\ref{eq:postlie2}) can always be absorbed into a change of the sign in the definition of the torsion bracket, since $\{\A,-[\cdot,\cdot]\}$ is also a Lie algebra and~(\ref{eq:postlie1}) is invariant under a sign change in the torsion. There is therefore no need to define both left- and right-versions of post-Lie algebras.
\end{remark}

\begin{remark} Post-Lie algebras were  introduced around 2007 by  B.\ Vallette~\cite{vallette2007hog}, who found the structure in a purely operadic manner as the Koszul dual of the operad governing commutative trialgebras. The enveloping algebra of a post-Lie algebra was independently introduced about the same time under the name $D$-algebra in~\cite{munthe-kaas2008oth}, and studied in the context of Lie--Butcher series. A basis for the free post-Lie algebra was presented in~\cite{munthe-kaas2003oep}, before the post-Lie definition was formalized. 
Vallette defines a post-Lie operad  and proves that post-Lie algebras have the important algebraic property of being Koszul. This property is shared by many other fundamental algebras, such as Lie algebras, associative algebras, commutative algebras, pre-Lie algebras, dendriform algebras etc. He also defines the operadic homology of post-Lie algebras and computes this for the free post-Lie algebra.
\end{remark}

\begin{remark} 
The present differential geometric explanation of a post-Lie algebra is, as far as we are aware, new.  Since
condition~(\ref{eq:postlie2}) is expressing the flatness of the connection, while (\ref{eq:postlie1}) derives from the constant torsion, we initially called this structure a FCT (flat, constant torsion) algebra, but will henceforth adhere to the name \emph{post-Lie algebra}.

A pre-Lie algebra is a post-Lie algebra where $[\cdot,\cdot]=0$, hence most results about post-Lie algebras also yield information about pre-Lie algebras. In particular, the
$D$-algebra provides a definition for the enveloping algebra of a pre-Lie algebra.  We return to this in the sequel.
\end{remark}

\subsubsection{Some basic results about post-Lie algebras.}
\begin{proposition}\label{prop:jlbracket}If $\{A,[\cdot,\cdot],\tr\}$ is post-Lie, then the bracket $\llbracket x,y\rrbracket$ defined as
 \begin{equation}\llbracket x,y\rrbracket := x\tr y - y\tr x + [x,y]\label{eq:jlb} \end{equation}
 is a Lie bracket.
\end{proposition}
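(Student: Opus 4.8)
The plan is to verify the two defining properties of a Lie bracket. Bilinearity of $\llbracket\cdot,\cdot\rrbracket$ is clear, and skew-symmetry is immediate: $x\tr y - y\tr x$ is skew and $[\cdot,\cdot]$ is already a Lie bracket, so $\llbracket x,y\rrbracket = -\llbracket y,x\rrbracket$. Hence everything reduces to the Jacobi identity $\csum\llbracket\llbracket x,y\rrbracket,z\rrbracket = 0$, where $\csum$ is the sum over cyclic permutations of $(x,y,z)$.

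The two post-Lie axioms enter through two reformulations. Introduce the left-multiplication operators $D_x\colon A\to A$, $D_x(w):=x\tr w$. Expanding the associators in~(\ref{eq:postlie2}) by $a_\tr(x,y,z)=x\tr(y\tr z)-(x\tr y)\tr z$ and rearranging gives $(x\tr y-y\tr x)\tr z+[x,y]\tr z = x\tr(y\tr z)-y\tr(x\tr z)$; that is, $D_{\llbracket x,y\rrbracket}=D_xD_y-D_yD_x$, and in particular $\llbracket x,y\rrbracket\tr z = x\tr(y\tr z)-y\tr(x\tr z)$. Axiom~(\ref{eq:postlie1}) says precisely that each $D_x$ is a derivation of $[\cdot,\cdot]$, so in particular $z\tr[x,y] = [z\tr x,y]+[x,z\tr y]$.

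Now expand $\llbracket\llbracket x,y\rrbracket,z\rrbracket = \llbracket x,y\rrbracket\tr z - z\tr\llbracket x,y\rrbracket + [\llbracket x,y\rrbracket,z]$. Using the first reformulation on the first term, the derivation property on the summand $z\tr[x,y]$ inside the second term, and bilinearity of $[\cdot,\cdot]$ on the third, one finds that $\llbracket\llbracket x,y\rrbracket,z\rrbracket$ is a sum of three pieces: (I) $x\tr(y\tr z)-y\tr(x\tr z)-z\tr(x\tr y)+z\tr(y\tr x)$; (II) $-[z\tr x,y]-[x,z\tr y]+[x\tr y,z]-[y\tr x,z]$; (III) $[[x,y],z]$. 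The cyclic sum of (III) vanishes by the Jacobi identity of $[\cdot,\cdot]$. For (I) and (II) I would write out the three cyclic instances explicitly: after rewriting $[x,z\tr y]=-[z\tr y,x]$ in (II), each monomial appearing in the cyclic sum of (I), and each appearing in that of (II), occurs exactly twice, once with a plus and once with a minus sign, so both cyclic sums vanish. Adding the three contributions yields the Jacobi identity.

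The only genuine care needed is the sign bookkeeping in the cancellations of the cyclic sums of (I) and (II); no input beyond the two axioms and the Jacobi identity of the underlying Lie algebra is required. As a consistency check, setting $[\cdot,\cdot]=0$ makes (II) and (III) disappear, the first reformulation becomes the pre-Lie identity~(\ref{eq:prelie}), and one recovers the classical fact that a pre-Lie algebra is Lie-admissible with bracket $x\tr y-y\tr x$.
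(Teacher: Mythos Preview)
Your proof is correct and follows the same approach as the paper, which simply notes skew-symmetry and says the Jacobi identity ``is verified by a direct computation'' without giving details. Your organization via the identity $D_{\llbracket x,y\rrbracket}=[D_x,D_y]$ (derived from~(\ref{eq:postlie2})) and the derivation property of $D_z$ on $[\cdot,\cdot]$ (from~(\ref{eq:postlie1})) is a clean way to carry out that computation, and the decomposition into the three cyclic sums (I), (II), (III) makes the cancellations transparent.
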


\begin{proof}Clearly $\llbracket x,y\rrbracket=-\llbracket y,x\rrbracket$, and the Jacobi rule $\csum\left( \llbracket\llbracket x,y\rrbracket,z\rrbracket\right)=0$ is verified by a direct computation. \end{proof}
Note that in the motivating example of connections on manifolds, (\ref{eq:jlb}) is identical to (\ref{eq:torsion}), where $\llbracket\cdot,\cdot\rrbracket$ is the Jacobi-Lie bracket and $[\cdot,\cdot]$ the negative torsion.

By a modification of the product $\tr$ in $A$, we obtain another post-Lie algebra. 
\begin{proposition}\label{prop:twist}Let $\{A,[\cdot,\cdot],\tr\}$ be post-Lie.  Define the product $\vartr$ as
\[x\vartr y = x\tr y + [x,y],\]
then $\{A,-[\cdot,\cdot],\vartr\}$ is also post-Lie.
\end{proposition}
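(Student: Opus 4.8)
The plan is to check directly that $\{A,-[\cdot,\cdot],\vartr\}$ satisfies the two defining identities (\ref{eq:postlie1})--(\ref{eq:postlie2}) of a post-Lie algebra. One ingredient is immediate: if $[\cdot,\cdot]$ is a Lie bracket then so is $-[\cdot,\cdot]$, so all the work lies in the two compatibility relations. In each of them I would substitute the definition $x\vartr y = x\tr y + [x,y]$, expand, and reorganize using the hypotheses on $\{A,[\cdot,\cdot],\tr\}$ together with the Jacobi identity of $[\cdot,\cdot]$.

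For the analogue of (\ref{eq:postlie1}): the assertion for the twisted data is $x\vartr(-[y,z]) = -[x\vartr y,z] - [y,x\vartr z]$, which after cancelling the overall sign reads $x\vartr[y,z] = [x\vartr y,z]+[y,x\vartr z]$. Expanding both sides via the definition of $\vartr$, the terms carrying a single $\tr$ are exactly (\ref{eq:postlie1}) for the original product and hence cancel, and the leftover terms involving only $[\cdot,\cdot]$ reduce to $[x,[y,z]] = [[x,y],z] + [y,[x,z]]$, which is the Jacobi identity. This step is short.

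For the analogue of (\ref{eq:postlie2}): I would first expand the twisted associator. Writing $a_\vartr(x,y,z) = x\vartr(y\vartr z) - (x\vartr y)\vartr z$, substituting the definition of $\vartr$ throughout, and recalling $a_\tr(x,y,z) = x\tr(y\tr z)-(x\tr y)\tr z$, one obtains $a_\vartr(x,y,z)$ as $a_\tr(x,y,z)$ plus bracket-valued correction terms; the corrections linear in $\tr$ simplify via (\ref{eq:postlie1}). Antisymmetrizing in $x$ and $y$, the double-bracket corrections are absorbed by the Jacobi identity, and the right-hand side of (\ref{eq:postlie2}) for the twisted data collapses to $-([x,y]\tr z)-[[x,y],z]$. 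On the other hand, by the definition of $\vartr$ and bilinearity of $[\cdot,\cdot]$ one has $(-[x,y])\vartr z = -([x,y]\tr z)-[[x,y],z]$, so after using the original identity (\ref{eq:postlie2}) to rewrite $[x,y]\tr z$ if needed, both sides coincide.

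The routine-but-delicate part, and the only place where an error could creep in, is the second identity: one juggles the full expansion of two twisted associators together with (\ref{eq:postlie1}), (\ref{eq:postlie2}) and the Jacobi identity simultaneously, and several sign cancellations --- from the antisymmetrization, from $[y,x]=-[x,y]$, and from the sign-flipped bracket --- must line up. A useful consistency check is that the bracket of Proposition~\ref{prop:jlbracket} is invariant under the twist, since $x\vartr y - y\vartr x + (-[x,y]) = x\tr y - y\tr x + [x,y]$; thus the deformation moves within a fixed Lie-admissible structure, exactly as one expects.
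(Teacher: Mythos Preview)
Your proposal is correct and follows essentially the same route as the paper: the paper phrases the verification of (\ref{eq:postlie1}) slightly more conceptually, noting that $x\tr\cdot$ and $[x,\cdot]$ are both derivations of $[\cdot,\cdot]$ so their sum is too, and for (\ref{eq:postlie2}) it simply says ``a direct computation shows that (\ref{eq:postlie2}) holds with a sign change,'' which is exactly the expansion-and-antisymmetrization you outline.
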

\begin{proof} Since both $x\tr\cdot$ and $[x,\cdot]$ are derivations on the torsion bracket, $x\tr \cdot + \alpha [x,\cdot]$ is also a derivation, for
any $\alpha\in \k$. A direct computation shows that~(\ref{eq:postlie2}) holds with a sign change, which is corrected by negating the torsion bracket. 
\end{proof}

\begin{proposition}\label{prop:levicivita}Let $\{A,[\cdot,\cdot],\tr\}$ be post-Lie.  Define the product $\ctr$ as
\[x\ctr y = x\tr y + \frac12[x,y],\]
then $\{A,\ctr\}$ is Lie-admissible. The skew associator is
\[a_\ctr(x,y,z)-a_\ctr(y,x,z)= -\frac14[[x,y],z] =: R(x,y)z,\]
which can be interpreted as a curvature. 
\end{proposition}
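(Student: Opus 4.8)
The proof is a direct, if slightly intricate, computation: one expands the associator of $\ctr$ in terms of $\tr$ and $[\cdot,\cdot]$ and watches the $\tr$-dependent terms cancel by virtue of the two post-Lie axioms, leaving only an expression in the torsion bracket. First I would substitute $x\ctr y = x\tr y + \tfrac12[x,y]$ into $a_\ctr(x,y,z) = x\ctr(y\ctr z) - (x\ctr y)\ctr z$ and collect the resulting terms into three groups: the pure connection associator $a_\tr(x,y,z)$; a ``mixed'' group $\tfrac12\,x\tr[y,z] - \tfrac12\,[x,y]\tr z + \tfrac12\,[x,y\tr z] - \tfrac12\,[x\tr y,z]$ involving exactly one bracket and one $\tr$; and a pure bracket group $\tfrac14[x,[y,z]] - \tfrac14[[x,y],z]$. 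In the mixed group I would apply (\ref{eq:postlie1}), which says that $x\tr\cdot$ is a derivation of the torsion bracket, to rewrite $x\tr[y,z] = [x\tr y,z] + [y,x\tr z]$; this makes $\tfrac12[x\tr y,z]$ cancel $-\tfrac12[x\tr y,z]$, so the mixed group collapses to $\tfrac12[x,y\tr z] + \tfrac12[y,x\tr z] - \tfrac12[x,y]\tr z$, while the pure bracket group is untouched.

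Next I would form the skew associator $a_\ctr(x,y,z) - a_\ctr(y,x,z)$, treating the three groups separately. By (\ref{eq:postlie2}) the connection part yields exactly $[x,y]\tr z$. The pair $\tfrac12[x,y\tr z] + \tfrac12[y,x\tr z]$ is symmetric under $x\leftrightarrow y$, hence cancels against its swap; and $-\tfrac12[x,y]\tr z$ becomes, after antisymmetrising, $-\tfrac12[x,y]\tr z - \tfrac12[y,x]\tr z = -[x,y]\tr z$, which exactly annihilates the $[x,y]\tr z$ coming from (\ref{eq:postlie2}). Thus every $\tr$-term disappears and only $\tfrac14\big([x,[y,z]] - [y,[x,z]]\big) - \tfrac12[[x,y],z]$ survives; applying the Jacobi identity for $[\cdot,\cdot]$ in the form $[x,[y,z]] - [y,[x,z]] = [[x,y],z]$, this collapses to $\tfrac14[[x,y],z] - \tfrac12[[x,y],z] = -\tfrac14[[x,y],z]$, the asserted value of $R(x,y)z$.

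Lie-admissibility is then immediate: summing $a_\ctr(x,y,z) - a_\ctr(y,x,z) = -\tfrac14[[x,y],z]$ over the three cyclic permutations of $(x,y,z)$ gives $-\tfrac14\,\csum\big([[x,y],z]\big) = 0$ by the Jacobi identity, so $\ctr$ satisfies (\ref{eq:lie-adm}). As a consistency check one may note that skew-symmetrising $\ctr$ recovers $x\ctr y - y\ctr x = (x\tr y - y\tr x) + [x,y] = \llbracket x,y\rrbracket$, the Lie bracket of Proposition~\ref{prop:jlbracket}.

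I do not expect a genuine conceptual obstacle; the content is purely computational. The one place requiring care is the sign bookkeeping in the bracket terms under the swap $x\leftrightarrow y$ — in particular, noticing that the surviving $-\tfrac12[x,y]\tr z$ \emph{doubles} rather than cancels under antisymmetrisation, so that it kills the $[x,y]\tr z$ produced by (\ref{eq:postlie2}); overlooking this would leave a stray $\tr$-term and obscure the clean curvature formula.
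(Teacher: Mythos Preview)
Your proof is correct and follows essentially the same computational route as the paper's: expand $\ctr$ in terms of $\tr$ and $[\cdot,\cdot]$, use the two post-Lie axioms to make all $\tr$-dependent terms cancel, and finish with the Jacobi identity. The paper organises the bookkeeping a little differently---it writes the skew associator in the curvature form $x\ctr(y\ctr z) - y\ctr(x\ctr z) - \llbracket x,y\rrbracket\ctr z$ (so the pure-$\tr$ terms are absorbed at once by the relation $\llbracket x,y\rrbracket\tr z = x\tr(y\tr z) - y\tr(x\tr z)$) and it deduces Lie-admissibility first from $x\ctr y - y\ctr x = \llbracket x,y\rrbracket$ via Proposition~\ref{prop:jlbracket} rather than afterwards from the cyclic sum of $-\tfrac14[[x,y],z]$---but these are cosmetic rearrangements of the same argument, not a different one.
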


\begin{proof}Lie-admissibility means that the skew symmetrisation of the product is a Lie bracket. This follows from $x\ctr y - y\ctr x = \llbracket x,y\rrbracket$, defined in~(\ref{eq:jlb}). The formula for the skew associator follows from
\begin{align*}
R(x,y)z &:= x\ctr(y\ctr z) - x\leftrightarrow y - {\llbracket x,y\rrbracket}\ctr z \\ &= x\tr (y\tr z + \frac12 [y,z]) + \frac12 [x,y\tr z + \frac12 [y,z]]
- x\leftrightarrow y - \llbracket x,y\rrbracket \tr z - \frac12 [\llbracket x,y\rrbracket,z] \\
&= \frac12 [x\tr y,z] + \frac12 [y,x\tr z] + \frac12 [x,y\tr z]+ \frac14[x,[y,z]] - x\leftrightarrow y-\frac12[\llbracket x,y\rrbracket,z] \\
&= \frac14[x,[y,z]]-x\leftrightarrow y -\frac12[[x,y],z] = \frac14 [[x,y],z] - \frac12 [[x,y],z] \\
&= -\frac14 [[x,y],z] ,
\end{align*}
where $x\leftrightarrow y$ means swap $x$ and $y$ in everything to the left. Note that the algebraic condition of Lie-admissibility, $\csum\left(a_\ctr(x,y,z)-a_\ctr(y,x,z)\right)=0$, follows from the Jacobi identity of $[\cdot,\cdot]$.

If $\llbracket\cdot,\cdot\rrbracket$ is interpreted as a Jacobi-Lie bracket of vector fields, it follows from (\ref{eq:torsion})-(\ref{eq:curvature}) that $T=0$ and $R(x,y)z$ is the curvature tensor.
\end{proof}

\begin{remark}\label{rem:cs}In the case of vector fields on a Lie group,  $\ctr$ corresponds to the (torsion free) Levi--Civita connection, while  $\tr$ and $\vartr$ correspond to connections obtained by trivializing the tangent bundle $TG$ from right or left (using the right and left Maurer--Cartan forms). This is detailed in Section~\ref{subsec:mcform}. These three connections on Lie groups were first introduced by Cartan and Schouten~\cite{cartan1926geometry}, and are often called the (+), (-) and (0) Cartan--Schouten connections. The differential geometric computation of the torsion and curvature for these is found in~\cite[Proposition~2.12]{kobayashi1969foundations}. It is remarkable that these objects can be defined in the abstract formalism of post-Lie algebras, and computed in this purely algebraic setting.
\end{remark}

\subsection{Lie groups and homogeneous spaces}\label{LieGps} 

\subsubsection{Post-Lie algebras and numerical Lie group integrators}\label{sec:lgi} We will briefly review the fundamentals  of numerical Lie group integration on a homogeneous space, as formulated in~\cite{munthe-kaas1999hor}, and reveal the underlying post-Lie algebra of this formulation.
Let $G$ be a Lie group with Lie algebra $\g$, and let  
\[\lambda\colon G\times \M\rightarrow \M,\quad (g,u)\mapsto g\cdot u\] be a transitive left action of $G$ on a homogeneous space $\M$, with infinitesimal generator
\begin{equation}\lambda_\star\colon \g\times \M\rightarrow T\M,\quad (v,u)\mapsto \left.\frac{\partial}{\partial t}\right|_{t=0}\exp(tv)\cdot u \in T_u\M.\label{eq:lambdastar}\end{equation}
Let $\Omega^k(\M,\g)$ be the space of $\g$-valued k-forms on $\M$. In particular, $\Omega^0(\M,\g)$ is the space of maps from $\M$ to $\g$. Any $x\in \Omega^0(\M,\g)$ generates a vector field $X\in \XM$ as
\begin{equation}X(u) = \lambda_\star(x(u),u) ,\label{eq:leftiso}\end{equation}
which by abuse of notation is written compactly as $X=\lambda_\star(x)$, where $\lambda_*\colon \Omega^0(\M,\g)\rightarrow\XM$. 
%

\begin{remark}
Most Lie group integrators for the differential equation $u'(t) = F(u(t))$, where $u(t)\in \M$ and $F\in \XM$, are based on rewriting the equation as $u'(t) = \lambda_* f(u(t))$ for $f\in \Omega^0(\M,\g)$. It is important to note that if the action of $G$ is transitive but not free on $\M$, then $\lambda_*\colon \Omega^0(\M,\g)\rightarrow\XM$ is surjective but not injective.
The freedom in choice of a $f$ to represent $F$ is given by the isotropy (stabilizer) subgroup of $G$ at a point $u\in \M$. Different choices of isotropy can lead to significantly different numerical integrators. As pointed out by Lewis and Olver~\cite{lewis2002gia}, moving frames is an important tool in the study of isotropy choice for Lie group integrators. We return to this in the sequel.
\end{remark}

Numerical analysis of Lie group integrators is intimately related to post-Lie algebras, due to the following result. 

\begin{proposition}\label{prop:homogen} Let $G$ be a Lie group, with Lie algebra $\{\g,[\cdot,\cdot]_\g\}$, acting from the left on a manifold $\M$. Let the Lie bracket 
$[\cdot,\cdot]\colon \Omega^0(\M,\g)\times \Omega^0(\M,\g)\rightarrow \Omega^0(\M,\g)$ and the product $\tr\colon \Omega^0(\M,\g)\times \Omega^0(\M,\g)\rightarrow \Omega^0(\M,\g)$ be defined pointwise at $u\in \M$ as
\begin{eqnarray}
[x,y](u) &=& [x(u),y(u)]_\g\\
x\tr y & = & \lambda_*(x)(y)\quad\mbox{(the Lie derivative of $y$ along $\lambda_*(x)$).}\label{eq:triangle}
\end{eqnarray}
Then $\{\Omega^0(\M,\g),[\cdot,\cdot],\tr\}$ is a post-Lie algebra.
\end{proposition}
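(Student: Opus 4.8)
The plan is to verify the three defining conditions of a post-Lie algebra one at a time: that $[\cdot,\cdot]$ is a Lie bracket, that $\tr$ is a derivation of it (relation~(\ref{eq:postlie1})), and the flatness relation~(\ref{eq:postlie2}); only the last carries any content. That $[\cdot,\cdot]$ is bilinear, skew-symmetric and satisfies the Jacobi identity is immediate, since each identity is checked pointwise at $u\in\M$ in the Lie algebra $\{\g,[\cdot,\cdot]_\g\}$ and the $\k$-vector-space structure on $\Omega^0(\M,\g)$ is the pointwise one; smoothness of $[x,y]$ is clear. For $\tr$, the key elementary fact is that $\lambda_\star(\cdot,u)\colon\g\to T_u\M$ is the differential at $e$ of the orbit map $g\mapsto g\cdot u$, hence \emph{linear in its first slot}. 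This has three consequences I will use: $\lambda_*(x)$ is a smooth vector field when $x$ is smooth, so $x\tr y=\lambda_*(x)(y)$ again lies in $\Omega^0(\M,\g)$ and $\tr$ is well defined; $\lambda_*$ is $C^\infty(\M)$-linear in its argument, $\lambda_*(\phi\,x)=\phi\,\lambda_*(x)$ for scalar fields $\phi$; and on \emph{constant} sections $v,w\in\g\subset\Omega^0(\M,\g)$ the map $v\mapsto\lambda_*(v)$ is the fundamental-vector-field map of the $G$-action, which is a morphism of Lie algebras up to a fixed sign $\varepsilon=\pm1$ (an anti-morphism for the left action written in~(\ref{eq:lambdastar})): $\llbracket\lambda_*(v),\lambda_*(w)\rrbracket_J=\varepsilon\,\lambda_*([v,w]_\g)$.

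Relation~(\ref{eq:postlie1}) then follows from the Leibniz rule alone: since $[\cdot,\cdot]_\g$ is a \emph{fixed} bilinear map of vector spaces, differentiation of $\g$-valued functions along $\lambda_*(x)$ satisfies the Leibniz rule with respect to it, whence $x\tr[y,z]=\lambda_*(x)\bigl(u\mapsto[y(u),z(u)]_\g\bigr)=[\lambda_*(x)(y),z]+[y,\lambda_*(x)(z)]=[x\tr y,z]+[y,x\tr z]$.

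The core of the argument is the identity
\begin{equation*}
\llbracket\lambda_*(x),\lambda_*(y)\rrbracket_J \;=\; \lambda_*\bigl(x\tr y - y\tr x + \varepsilon\,[x,y]\bigr),\qquad x,y\in\Omega^0(\M,\g),
\end{equation*}
which says $\lambda_*$ intertwines the Jacobi--Lie bracket of vector fields with the bracket~(\ref{eq:jlb}) of Proposition~\ref{prop:jlbracket}, up to the sign $\varepsilon$. I would prove it by expanding $x=\sum_i\phi_i v_i$ and $y=\sum_j\psi_j v_j$ in a basis $\{v_i\}$ of $\g$ with $\phi_i,\psi_j\in C^\infty(\M)$, applying the Leibniz rule $\llbracket\phi X,\psi Y\rrbracket_J=\phi\psi\llbracket X,Y\rrbracket_J+\phi(X\psi)Y-\psi(Y\phi)X$ to each $\llbracket\phi_i\lambda_*(v_i),\psi_j\lambda_*(v_j)\rrbracket_J$, invoking the constant-section identity on the terms $\llbracket\lambda_*(v_i),\lambda_*(v_j)\rrbracket_J$, and reassembling: the bracket terms give $\varepsilon\,\lambda_*\bigl(\sum_{i,j}\phi_i\psi_j[v_i,v_j]_\g\bigr)=\varepsilon\,\lambda_*([x,y])$ by bilinearity of $[\cdot,\cdot]_\g$ and $C^\infty(\M)$-linearity of $\lambda_*$, while the derivative terms give $\lambda_*(x\tr y)-\lambda_*(y\tr x)=\lambda_*(x\tr y-y\tr x)$. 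This local expansion, together with keeping the sign $\varepsilon$ straight, is the one genuinely fiddly step; everything else is formal. (One could instead quote the standard formula for the Jacobi--Lie bracket of fundamental vector fields of a Lie group action, but its extension from constant generators to $\g$-valued ones is exactly this computation.)

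Granting the identity, (\ref{eq:postlie2}) is a two-line computation: from the definition~(\ref{eq:associator}) of the associator,
\begin{equation*}
a_\tr(x,y,z)-a_\tr(y,x,z)=\bigl(x\tr(y\tr z)-y\tr(x\tr z)\bigr)-(x\tr y-y\tr x)\tr z,
\end{equation*}
and $x\tr(y\tr z)-y\tr(x\tr z)=\llbracket\lambda_*(x),\lambda_*(y)\rrbracket_J(z)=\bigl(x\tr y-y\tr x+\varepsilon\,[x,y]\bigr)\tr z$ (the Jacobi--Lie bracket of vector fields acting on $\g$-valued functions componentwise), so subtraction leaves $a_\tr(x,y,z)-a_\tr(y,x,z)=\varepsilon\,[x,y]\tr z$. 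For $\varepsilon=+1$ this is precisely~(\ref{eq:postlie2}); for $\varepsilon=-1$ it is the sign-reversed version, which by Remark~\ref{rem:sign} presents the same post-Lie structure after replacing $[\cdot,\cdot]$ by $-[\cdot,\cdot]$, under which~(\ref{eq:postlie1}) is unchanged, so no generality is lost. Hence $\{\Omega^0(\M,\g),[\cdot,\cdot],\tr\}$ is a post-Lie algebra.
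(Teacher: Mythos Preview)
Your proof is correct and follows essentially the same route as the paper: fix a basis $\{v_i\}$ of $\g$, use the fundamental-vector-field identity $\llbracket\lambda_*(v_i),\lambda_*(v_j)\rrbracket_J=\varepsilon\,\lambda_*([v_i,v_j]_\g)$, expand $x,y$ in that basis, and verify the post-Lie axioms by direct computation (the paper writes out $[x,y]$ and $x\tr y$ in these coordinates and then defers the remaining verification to~\cite[Lemma~3]{munthe-kaas2008oth}). Your argument is in fact more explicit than the paper's sketch, and your careful bookkeeping of the sign $\varepsilon$---with the appeal to Remark~\ref{rem:sign} to absorb it---is exactly the right way to reconcile the paper's stated sign $\lambda_*([e_j,e_k]_\g)=-\llbracket\partial_j,\partial_k\rrbracket_J$ with the stated bracket in the proposition.
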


\begin{proof} This can be verified by a coordinate computation. Let $\{e_j\}$ be a basis for $\g$ and $\partial_j=\lambda_*(e_j)$ the corresponding right invariant vector fields on $\M$. Note that $\lambda_*([e_j,e_k]_\g) = -\llbracket \partial_j,\partial_k\rrbracket_J$, where the right hand side is the Jacobi--Lie bracket of vector fields. Letting $x(p) = \sum_j x^j(p) e_j$ and $y(p)= \sum_k y^k(p) e_k$, where $x^j$ and $y^k$ are scalar functions on $\M$,  we obtain
\begin{eqnarray*}
[x,y] & = & \sum_{j,k}x^jy^k[e_j,e_k]\\
x\tr y & = & \sum_{j,k} x^j\partial_j(y^k)e_k.
\end{eqnarray*}
The post-Lie conditions follow by a straightforward computation, see~\cite[Lemma 3]{munthe-kaas2008oth} for a similar computation in the enveloping algebra. 
\end{proof}

The connection $\tr$ leads to a parallel transport of vector fields $\phi^*x (p) = x(\phi(p))$ for $x\in \Omega^0(\M,\g)$  and
$\phi\in \mbox{Diff}(\M)$. This parallel transport is used in numerical Lie group integrators to collect tangent vectors at a common
base point in order to compute the timestep of the algorithm. The parallel transport is  independent of paths, since the connection is flat, and it is given  as the exponential of $\tr$, which  can be expressed algebraically in the enveloping algebra of the post-Lie algebra of Proposition~\ref{prop:homogen}.

\subsubsection{The Maurer--Cartan forms}\label{subsec:mcform} 
In this section we return to the post-Lie structure obtained from the Maurer--Cartan forms on a Lie group, as mentioned in Remark~\ref{rem:cs}.
The (left) Maurer--Cartan (MC) form on a Lie group $G$ is
a $\g$-valued one-form  $\omega\in \Omega^1(G,\g)$, defined as the map $\omega\colon TG\rightarrow \g$ moving $v\in T_gG$ to $\g$ by left translation \[\omega(v) = TL_{g^{-1}} v ,\]
where $L_g h = g h$ is left multiplication in the group\footnote{The MC form can also be defined by right translation, but the left form is more convenient for moving frames.}. 

The Maurer--Cartan form defines a linear isomorphism $\omega_p\colon T_pG\rightarrow \g$ and hence defines an isomorphism between $\Omega^0(G,\g)$ and
vector fields  $\XG$. For the \emph{right} Maurer--Cartan form, this isomorphism is given by $\lambda_*$ defined in~(\ref{eq:lambdastar})-(\ref{eq:leftiso}) with $\M=G$. For the left MC form, the corresponding isomorphism   $\rho_*\colon\Omega^0(G,\g)\rightarrow \XG$ is given as the infinitesimal right action
\begin{equation}\label{eq:rhomap1}\rho_*(x)(g) = \left.\frac{\partial}{\partial t}\right|_{t=0}g \exp(t x(g)).\end{equation}

The  Maurer--Cartan form satisfies the structural equation
\begin{equation}d\omega + \frac12\omega\wedge\omega = 0.\label{eq:mcstructure}\end{equation}
On a general (connected, smooth) manifold $\M$, the
existence of a $\g$-valued one-form which is an isomorphism on the fibre and satisfies~(\ref{eq:mcstructure}) implies that $\M$ can be given the structure of a Lie group (up to a covering)~\cite[Theorem \S 8.8.7]{sharpe1997dg}. Thus the Maurer--Cartan form is fundamental in a differential geometric characterization of Lie groups.

The curvature of $\omega\in\Omega^1(G,\g)$ is given as $R = d\omega + \frac12\omega\wedge\omega\in\Omega^2(G,\g)$, and~(\ref{eq:mcstructure}) is therefore a flatness condition $R=0$. Taking $\theta = -\omega$ as a solder form, we compute the torsion form 
$\Theta = d\theta + \omega\wedge\theta = -\frac12 \omega\wedge\omega\in \Omega^2(G,\g)$. This yields
\[\Theta(X,Y) = -[\omega(X),\omega(Y)]_\g .\]
Therefore, the Maurer--Cartan form is flat with constant torsion. 
\begin{proposition}\label{prop:rightMC}
Given a Lie group $G$ and the inverse of the (left) MC form $\rho_*\colon \Omega^0(G,\g)\rightarrow \XG$ in~(\ref{eq:rhomap1}), then 
$\{\Omega^0(G,\g),-[\cdot,\cdot],\vartr\}$ is a post-Lie algebra, where
\begin{eqnarray*}
x\vartr y &:=& \rho_*(x)(y)\\
\ [x,y](g) &:=& [x(g),y(g)]_\g.
\end{eqnarray*}
The connection $x\vartr y$ of the left MC form is related to the connection of the right MC form $x\tr y$, see~(\ref{eq:triangle}), as
\begin{equation}x\vartr y = x\tr y + [x,y].\label{eq:tritri}\end{equation}
\end{proposition}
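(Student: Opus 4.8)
The plan is to obtain the proposition as a \emph{twist}, in the sense of Proposition~\ref{prop:twist}, of the post-Lie structure supplied by Proposition~\ref{prop:homogen}, with the relation~(\ref{eq:tritri}) playing the role of linchpin. Since $G$ acts transitively on itself by left translation, Proposition~\ref{prop:homogen} applies with $\M=G$; the associated infinitesimal action is precisely the map $\lambda_*$ of~(\ref{eq:lambdastar})--(\ref{eq:leftiso}), equivalently the inverse of the right Maurer--Cartan form, so $\{\Omega^0(G,\g),[\cdot,\cdot],\tr\}$ is a post-Lie algebra with $x\tr y=\lambda_*(x)(y)$ as in~(\ref{eq:triangle}) and $[x,y](g)=[x(g),y(g)]_\g$. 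Granting~(\ref{eq:tritri}), Proposition~\ref{prop:twist} then yields at once that $\{\Omega^0(G,\g),-[\cdot,\cdot],\vartr\}$ is a post-Lie algebra, its connection being by definition $x\tr y+[x,y]$, which by~(\ref{eq:tritri}) coincides with $\rho_*(x)(y)=x\vartr y$; this is exactly the structure attached to the left Maurer--Cartan form. So the whole content of the proposition is the identity~(\ref{eq:tritri}).

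To prove~(\ref{eq:tritri}) I would argue pointwise. Fix $g\in G$ and put $v=x(g)$. By~(\ref{eq:triangle}) and~(\ref{eq:rhomap1}), $(x\tr y)(g)$ and $(x\vartr y)(g)$ are the derivatives at $t=0$ of the $\g$-valued map $y$ along the curves $t\mapsto\exp(tv)\,g$ and $t\mapsto g\exp(tv)$ respectively. These two curves --- equivalently, the right- and left-invariant trivializations of $TG$ underlying $\tr$ and $\vartr$ --- are interchanged by the adjoint action $\operatorname{Ad}\colon G\to\operatorname{GL}(\g)$, and when $y$ is differentiated through this change of trivialization the only surviving discrepancy comes from the derivative of the adjoint, $\left.\frac{d}{dt}\right|_{t=0}\operatorname{Ad}_{\exp(tv)}=[v,\cdot]$; a short calculation turns this into the term $[x(g),y(g)]_\g=[x,y](g)$. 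This is essentially the coordinate computation from the proof of Proposition~\ref{prop:homogen}, now performed in a left-invariant rather than a right-invariant frame (compare the enveloping-algebra version in~\cite[Lemma~3]{munthe-kaas2008oth}), and it is consistent with the flatness and constant torsion of the Maurer--Cartan form established above.

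The delicate point --- and the only genuine work --- is the bookkeeping of signs here: one must obtain the bracket in~(\ref{eq:tritri}) with coefficient exactly $+1$ (see Remark~\ref{rem:sign} on the freedom in the sign of the torsion) so that Proposition~\ref{prop:twist} applies verbatim. Once this is in place, the post-Lie axioms~(\ref{eq:postlie1})--(\ref{eq:postlie2}) for $\{\Omega^0(G,\g),-[\cdot,\cdot],\vartr\}$ need no further computation; alternatively one can check them head-on by rerunning the coordinate argument of Proposition~\ref{prop:homogen} with left-invariant vector fields, but routing through the twist is shorter.
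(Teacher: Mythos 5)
Your proposal follows essentially the same route as the paper's own proof: invoke Proposition~\ref{prop:homogen} with $\M=G$ to get the post-Lie structure $\{\Omega^0(G,\g),[\cdot,\cdot],\tr\}$ of the right Maurer--Cartan form, obtain~(\ref{eq:tritri}) by differentiating the adjoint correspondence $x(g)=\Ad_g\tilde{x}(g)$ between the two trivializations (the derivative of $\Ad$ supplying the bracket term), and then conclude via the twist of Proposition~\ref{prop:twist}. Your identification of~(\ref{eq:tritri}) and its sign as the only real content matches the paper, and your sketch of that differentiation is at the same level of detail as the paper's one-line argument.
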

\begin{proof}From the right MC form we get the post-Lie algebra $\{\Omega^0(G,\g),[\cdot,\cdot],\tr\}$ (Proposition~\ref{prop:homogen}). The
correspondence between a right trivialized $\tilde{x}$ and the corresponding left trivialized $x\in\Omega^0(G,\g)$ is given as $x(g) = \omega(\lambda_*\tilde{x})(g) = \Ad_g\tilde{x}(g)$, 
from which~(\ref{eq:tritri}) follows by differentiation. Hence, by Proposition~\ref{prop:twist}, $\{\Omega^0(G,\g),-[\cdot,\cdot],\vartr\}$ is post-Lie.
\end{proof}

\begin{remark}Since $\rho_*\colon \Omega^0(G,\g)\rightarrow \XG$ is an isomorphism, we can equivalently define $\vartr\colon\XG\xpr\XG\rightarrow\XG$ as
$X\vartr Y = \rho_*\left(X(\omega(Y))\right)$. This is a flat Koszul connection on $\XG$ with torsion $[X,Y]=-\rho_*[\omega(X),\omega(Y)]_\g$. The vector fields on a Lie group with this connection and torsion is a prime example of a post-Lie algebra.
\end{remark}

\subsubsection{Homogeneous spaces, a leftist view} \label{sec:leftist}
In this section we return to a post-Lie structure related to vector fields on a homogeneous manifold, but unlike the conventional formulation of numerical integration, outlined in Section~\ref{sec:lgi}, we will here
discuss an alternative formulation which has hitherto not been applied in numerical algorithms. This formulation is better suited for using moving frames to  choose isotropy in numerical integration.

We recall some aspects of the Klein--Cartan geometry of a homogeneous space $\M$ from~\cite{sharpe1997dg}. Given a transitive left action $\cdot\colon \G\times \M\rightarrow \M$, and an arbitrary point $o\in \M$, let $H = \stset{h\in G}{h\dpr o =o}$ be the isotropy subgroup at $o$ with Lie algebra $\h<\g$. Define 
\begin{equation}\label{eq:atiyahbundle}G\times_{\! H}\g := \left(G\times \g\right)/\sim\ ,\end{equation}
where $(g,v)\sim (gh,\Ad_{h^{-1}} v)$ for all $h\in H$. Define the map
\begin{equation}\label{eq:rhomap}{\rho_{\! M}}\colon G\times_{\! H}\g\rightarrow \M,\quad (g,v)\mapsto g\exp(v)\cdot o , \end{equation}
and its differential with respect to the second variable ${\rho_{\! M}}_*\colon G\times_{\! H}\g\rightarrow \TM$ as
\begin{equation}\label{eq:rhostarmap}{\rho_{\! M}}_*(g,v):= \left.\frac{\partial}{\partial t}\right|_{t=0}g \exp(t v)\dpr o\in T_{g\cdot o}\M.\end{equation}
Since ${\rho_{\! M}}_*(g,v+v^\perp) = {\rho_{\! M}}_*(g,v)$ for all $v^\perp\in \h$, it follows that ${\rho_{\! M}}_*$ induces a smooth quotient mapping
\begin{equation}\overline{{\rho_{\! M}}}_* \colon G\times_{\! H}\g/\h\rightarrow \TM, (g,v+\h)\mapsto {\rho_{\! M}}_*(g,v),
\end{equation}
where $\g/\h$ denotes the quotient as vector spaces. In~\cite[Prop. 5.1]{sharpe1997dg} it is shown:

\begin{proposition} $\overline{{\rho_{\! M}}}_*$ defines an isomorphism $G\times_{\! H}\g/\h \simeq \TM$ as vector bundles over $\M$.
\end{proposition}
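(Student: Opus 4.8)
The plan is to show that $\overline{{\rho_{\! M}}}_*$ is a morphism of vector bundles over $\M$ covering the identity map and that it restricts to a linear isomorphism on each fibre. A smooth, fibrewise linear isomorphism of vector bundles over a fixed base is automatically a vector bundle isomorphism (in a local trivialization it reads $(q,v)\mapsto(q,A(q)v)$ with $A$ smooth and pointwise invertible, so $A^{-1}$ is smooth by Cramer's rule), and smoothness of $\overline{{\rho_{\! M}}}_*$ is already supplied by the construction preceding the statement; hence this reduction does the job.

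First I would pin down the bundle structures. Identify $\M$ with $G/H$ via the diffeomorphism $gH\mapsto g\cdot o$, which exists by transitivity of the action together with the quotient manifold theorem, $H$ being the stabilizer of $o$. Then $G\times_{\!H}\g$ is the vector bundle associated to the principal $H$-bundle $G\to G/H$ through $\Ad\colon H\to\mathrm{GL}(\g)$. Since $h\exp(X)h^{-1}=\exp(\Ad_h X)$ and $h\exp(X)h^{-1}\in H$ whenever $X\in\h$ and $h\in H$, the subspace $\h\subset\g$ is $\Ad(H)$-invariant, so $G\times_{\!H}\h$ is a subbundle and the quotient $G\times_{\!H}\g/\h$ is a vector bundle over $\M$ with fibre $\g/\h$. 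By~(\ref{eq:rhostarmap}), ${\rho_{\! M}}_*(g,v)\in T_{g\cdot o}\M$ and $g\cdot o$ depends only on the class $[g]$, so $\overline{{\rho_{\! M}}}_*$ is fibre-preserving over the bundle projection $[g,v+\h]\mapsto g\cdot o$ and covers the identity of $\M$; it is linear along fibres because the $t$-derivative at $t=0$ of $g\exp(tv)\cdot o$ is linear in $v$.

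It remains to analyse the fibre maps. Fix $g\in G$ and set $\theta_g\colon\M\to\M$, $u\mapsto g\cdot u$. By~(\ref{eq:rhostarmap}) and the chain rule, ${\rho_{\! M}}_*(g,v)=(d\theta_g)_o\big(\lambda_*(v,o)\big)$, and $\theta_g$ is a diffeomorphism, so $v\mapsto{\rho_{\! M}}_*(g,v)$ has the same kernel and image as the map $\lambda_*(\cdot,o)\colon\g\to T_o\M$. The latter is the differential at $e$ of the orbit map $G\to\M$, $a\mapsto a\cdot o$, which factors as the submersion $G\to G/H$ followed by the diffeomorphism $G/H\xrightarrow{\ \sim\ }\M$ and is therefore a surjective submersion; hence $\lambda_*(\cdot,o)$ is surjective onto $T_o\M$ with kernel $T_eH=\h$. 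Transporting this by the linear isomorphism $(d\theta_g)_o$, the map $v\mapsto{\rho_{\! M}}_*(g,v)$ is surjective onto $T_{g\cdot o}\M$ with kernel exactly $\h$, so it descends to a linear isomorphism $\g/\h\xrightarrow{\ \sim\ }T_{g\cdot o}\M$. Under the identification of the fibre of $G\times_{\!H}\g/\h$ over $g\cdot o$ with $\g/\h$ using the representative $g$, this descended map is precisely the fibre of $\overline{{\rho_{\! M}}}_*$ there, and the proof is complete.

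The one genuinely geometric input is the identification $\ker\big(\lambda_*(\cdot,o)\big)=\h$ — that the stabilizer of $o$ is $H$ with Lie algebra $\h$, and that transitivity makes the orbit map a submersion; everything else is bundle bookkeeping plus the standard smoothness of the inverse. I expect the only mild friction to be the well-definedness checks, namely that ${\rho_{\! M}}_*$ descends through the relation $(g,v)\sim(gh,\Ad_{h^{-1}}v)$ and then through the quotient by $\h$; both follow from the single identity $gh\exp\!\big(t\,\Ad_{h^{-1}}v\big)\cdot o=g\exp(tv)h\cdot o=g\exp(tv)\cdot o$ for $h\in H$, using $h\cdot o=o$, which I would record once and reuse. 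Equivalently one could exhibit the inverse $\TM\to G\times_{\!H}\g/\h$ directly, sending $w\in T_{g\cdot o}\M$ to $\big[\,g,\ \big(\lambda_*(\cdot,o)\big)^{-1}\!\big((d\theta_g)_o^{-1}w\big)+\h\,\big]$ and checking independence of the choice of $g$ and of the lift modulo $\h$ by the same identity; the two routes are interchangeable.
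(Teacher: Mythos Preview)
Your argument is correct. The paper does not actually supply its own proof of this proposition: it simply states the result and cites \cite[Prop.~5.1]{sharpe1997dg}. What you have written is the standard argument for this identification---reduce to the fibre, identify the fibre map with the differential of the orbit map at $o$ composed with $(d\theta_g)_o$, and use transitivity to get surjectivity and the stabilizer condition to identify the kernel with $\h$---and is essentially the proof one finds in Sharpe. Your well-definedness check via the identity $gh\exp(t\Ad_{h^{-1}}v)\cdot o=g\exp(tv)\cdot o$ is exactly right.
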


Thus, tangents in $T_{g\cdot o}\M$ are uniquely represented as $(g,v)\in G\times_{\! H}\g/\h$, while  finite motions are not invariant under change of isotropy; in general ${\rho_{\! M}}(g,v)\neq {\rho_{\! M}}(g,v+v^\perp)$  for $v^\perp\in \h$. In order to fix a choice of isotropy, it is useful to discuss the notion of \emph{gauges}. 

A \emph{Cartan gauge} is a  local section of the principal $H$-bundle $\pi\colon G\rightarrow \M, g\mapsto g\dpr o$, i.e.\ a map $\sigma\colon U\subset \M\rightarrow G$ such that
$\pi\opr\sigma = \Id$ on an open set $U$. We denote this $(\sigma,U)$.

The \emph{Darboux derivative} of a map $f\colon \M\rightarrow G$, denoted  $\omega_f \colon \TM\rightarrow \g$, is defined as 
the pullback of the Maurer--Cartan form $\omega$ on $G$ along $f$, 
\begin{equation}\label{eq:darboux}\omega_f := f^*\omega = \omega\opr f_*,\end{equation}
where $f_*\colon \TM\rightarrow TG$ is the differential of $f$. This will always satisfy the Cartan condition
\begin{equation}\label{eq:cartanflat}d\omega_f + \frac12\omega_f\wedge\omega_f = 0,\end{equation}
and thus it is a flat $\g$-valued one-form $\omega_f\in \Omega^1(\M,\g)$. We call $f$ the \emph{primitive} of $\omega_f$. A generalization of the
fundamental theorem of calculus~\cite{sharpe1997dg} states that, \emph{locally}, a one-form $\theta\in\Omega^1(\M,\g)$ has a primitive $f\colon\M\rightarrow G$ if and only if
$\theta$ is flat, i.e.\  iff $\theta$ satisfies~(\ref{eq:cartanflat}). Furthermore, the primitive is determined uniquely up to left multiplication by a constant $c\in G$: if $\omega_f=\omega_{\tilde{f}}$ then $\tilde{f} = c f$ for some constant  of integration $c\in G$.

The Darboux derivative of the Cartan gauge $\omega_\sigma\in \Omega^1(\M,\g)$ is called an \emph{infinitesimal Cartan gauge}. Since $\omega_{\sigma}$ is flat, we can uniquely recover $\sigma$ from $\omega_{\sigma}$ (the integration constant is found by the condition that $\sigma$ is a section).

\begin{proposition}A Cartan gauge $(\sigma,U)$ with derivative $\omega_\sigma$,  defines a map $\sigma_*\colon \TM\rightarrow G\times_{\! H}\g$ as
\begin{equation}\label{eq:sigmastar}\sigma_*(v) = \sigma(\pi v)\xpr_H \omega_{\sigma}(v),
\end{equation} 
which extends to $\overline{\sigma}_*\colon \TM\rightarrow G\times_{\! H}\g/\h$ by composition with the projection $\g\to \g/\h$
\begin{equation}\overline{\sigma}_*\colon \TM\stackrel{\sigma_*}{\longrightarrow}G\times_{\! H}\g\rightarrow G\times_{\! H}\g/\h .
\end{equation} 
The maps $\overline{\sigma}_*$ and $\overline{{\rho_{\! M}}}_*$ are locally inverse vector bundle isomorphisms, 
$\overline{\sigma}_*\opr\overline{{\rho_{\! M}}}_*=\Id$ and $\overline{{\rho_{\! M}}}_*\opr\overline{\sigma}_*=\Id$ on their domains of definition.
\end{proposition}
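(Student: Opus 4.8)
The plan is to check first that $\sigma_*$ and $\overline{\sigma}_*$ are genuine vector bundle morphisms over $U$, then to establish the single identity $\overline{{\rho_{\! M}}}_*\circ\overline{\sigma}_*=\Id_{\TM|_U}$ by a short computation with the Maurer--Cartan form, and finally to read off the reverse identity from the previous proposition ($\overline{{\rho_{\! M}}}_*$ is a vector bundle isomorphism). For well-definedness, I would note that~\eqref{eq:sigmastar} involves no choice of representative: for $v\in T_p\M$ with footpoint $p=\pi v$, the pair $(\sigma(p),\omega_\sigma(v))$ is a definite element of $G\times\g$, so $\sigma_*(v)$ is a well-defined point of $G\times_{\! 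H}\g$ over $p$; it depends linearly on $v$ because $\omega_\sigma\in\Omega^1(\M,\g)$ is linear on $T_p\M$ while $\sigma(p)$ is constant along the fibre, and it is smooth because $\sigma$ and $\omega_\sigma$ are. Composing with $\g\to\g/\h$ preserves all of this, and (as observed above) $\overline{{\rho_{\! M}}}_*$ does descend to the $\h$-quotient, so both $\overline{\sigma}_*$ and $\overline{{\rho_{\! M}}}_*$ are vector bundle morphisms between $\TM|_U$ and $(G\times_{\! H}\g/\h)|_U$ covering $\Id_U$.

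The substance is the computation of $\overline{{\rho_{\! M}}}_*\circ\overline{\sigma}_*$ on $v\in T_p\M$, $p\in U$. Set $X:=\omega_\sigma(v)\in\g$. By the definition of the Darboux derivative and of the left MC form, $X=\omega\bigl(T\sigma(v)\bigr)=TL_{\sigma(p)^{-1}}\bigl(T\sigma(v)\bigr)$, where $T\sigma\colon\TM\to TG$ is the tangent map of the gauge $\sigma$. Since $\overline{\sigma}_*(v)$ is the class of $(\sigma(p),X)$, formula~\eqref{eq:rhostarmap} gives
\begin{align*}
\overline{{\rho_{\! M}}}_*\bigl(\overline{\sigma}_*(v)\bigr)&={\rho_{\! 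M}}_*(\sigma(p),X)=\left.\frac{d}{dt}\right|_{t=0}\sigma(p)\exp(tX)\cdot o\\
&=\left.\frac{d}{dt}\right|_{t=0}\pi\bigl(\sigma(p)\exp(tX)\bigr).
\end{align*}
Now $\left.\frac{d}{dt}\right|_{t=0}\sigma(p)\exp(tX)=TL_{\sigma(p)}X=T\sigma(v)$, so the curve $t\mapsto\sigma(p)\exp(tX)$ in $G$ has the same $1$-jet at $t=0$ as $t\mapsto\sigma(\gamma(t))$, where $\gamma$ is any curve in $U$ with $\gamma(0)=p$ and $\dot\gamma(0)=v$. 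Post-composing with the smooth map $\pi\colon G\to\M$ then yields curves in $\M$ with the same velocity at $0$; since $\pi\circ\sigma=\Id_U$, the latter curve is just $\gamma$, so $\overline{{\rho_{\! M}}}_*\bigl(\overline{\sigma}_*(v)\bigr)=\dot\gamma(0)=v$. This establishes $\overline{{\rho_{\! M}}}_*\circ\overline{\sigma}_*=\Id_{\TM|_U}$.

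Finally, $\overline{{\rho_{\! M}}}_*$ restricts over $U$ to a map that is bijective on each fibre, so the identity just obtained forces $\overline{\sigma}_*$ to be its fibrewise inverse; hence $\overline{\sigma}_*\circ\overline{{\rho_{\! M}}}_*=\Id$ on $(G\times_{\! H}\g/\h)|_U$ as well, and, being smooth and fibrewise linear, $\overline{\sigma}_*$ is a vector bundle isomorphism. I expect the only real obstacle to be the bookkeeping in the displayed step: one must unwind $\omega_\sigma$ as the pullback of the left MC form so that the two left translations cancel (this is precisely why $\omega(w)=TL_{g^{-1}}w$ is defined by \emph{left} translation), and be comfortable replacing a tangent vector by a representing curve and by the exponential curve with the same initial velocity. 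As an optional independent check one could verify $\overline{\sigma}_*\circ\overline{{\rho_{\! M}}}_*=\Id$ head-on: for a class $[g,v+\h]$ with $g\cdot o\in U$, write $\sigma(g\cdot o)=gh$ with $h\in H$ and confirm $\Ad_h\,\omega_\sigma\bigl(\overline{{\rho_{\! M}}}_*[g,v+\h]\bigr)\equiv v\pmod{\h}$; this route is longer and unnecessary.
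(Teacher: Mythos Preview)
Your proof is correct and is precisely the ``straightforward computation'' the paper alludes to but omits. The key step---unwinding $\omega_\sigma(v)=TL_{\sigma(p)^{-1}}\bigl(T\sigma(v)\bigr)$ so that the left translation in ${\rho_{\! M}}_*(\sigma(p),X)$ cancels it, and then using $\pi\circ\sigma=\Id$---is exactly what is needed, and your appeal to the previous proposition for the reverse identity is the natural way to finish.
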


\begin{proof} This is a straightforward computation.
\end{proof}

Infinitesimally all gauges are equivalent, but how does a change of gauge affect the finite motions induced by ${\rho_{\! M}}$?
A \emph{retraction map} is a smooth, locally defined map ${\cal R}\colon\TM\rightarrow \M$ such that:
\begin{itemize}
\item ${\cal R}(v) = \pi_M v$ if and only if $v$ is a 0-tangent, where
$\pi_M\colon\TM\rightarrow \M$ is the natural projection.
\item ${\cal R}'(0)=\Id$ (the identity on the tangent fibre). 
\end{itemize}Retractions provide a useful way of formulating numerical integration schemes~\cite{celledoni2003implementation}.
For any Cartan gauge $(\sigma,U)$ there corresponds a retraction map ${\cal R}_\sigma\colon TU\rightarrow \M$ defined as
\begin{equation}{\cal R}_\sigma(v) = {\rho_{\! M}}\opr \sigma_*(v) = \sigma(\pi v)\exp(\omega_{\sigma}(v))\dpr o,\quad\mbox{for $v\in TU$.}\label{eq:retraction}
\end{equation}
If $(\sigma,U)$ and $(\sigma',U)$ are two gauges on $U\subset\M$, then $\sigma'(u) = \sigma(u)h(u)$ for some smooth $h\colon U\rightarrow H$.
The corresponding infinitesimal gauges $\omega_{\sigma}$ and $\omega_{\sigma'}$ are related as~\cite[p.\ 168]{sharpe1997dg}
\begin{equation}
\omega_{\sigma}' = \Ad_{h^{-1}}\omega_{\sigma} + \omega_h,
\end{equation}
where $\omega_h\in \Omega^1(U,\h)$ is the Darboux derivative of $h$. Thus, the modified retraction is given as
 \begin{equation}{\cal R}_{\sigma'} = \sigma(\pi v)\exp\left(\omega_{\sigma}(v)\! +\!\omega_h(v)\right)\dpr o.
 \end{equation}
 If $h$ is constant then $\omega_h=0$ and the two retractions are equal, but generally they differ. 
 Numerical algorithms are built from finite motions where the choice of gauge (isotropy) influences the numerical methods. This has been discussed in~\cite{berland2002isotropy,lewis2002gia}. However, we believe that there is still much work to be done to better understand this aspect of geometric numerical integration. In the sequel we will discuss choice of gauge using moving frames.

Let $\Gamma(G\times_{\! H}\g)$ denote sections of the vector bundle $G\times_{\! H}\g\rightarrow \M$. This space can be identified 
with the subspace $\Omega_H^0(G,\g)\subset\Omega^0(G,\g)$ defined as
\begin{equation}\label{eq:omegaH}\Omega^0_H(G,\g) := \stset{x\in \Omega^0(G,\g)}{x(gh) = \Ad_{h^{-1}}x(g)}.\end{equation}
\begin{proposition}\label{prop:11cor}There is a 1--1 correspondence between elements $x\in \Omega^0_H(G,\g)$ and elements $X\in\Gamma(G\times_{\! H}\g)$,
defined locally by a Cartan gauge $(\sigma,U)$ as
\[X(u) =\sigma(u)\times_{\! H}x(\sigma(u)),\quad \mbox{for $u\in U\subset \M$.}\]
The map $x\mapsto X$ is independent of the choice of gauge $\sigma$. 
\end{proposition}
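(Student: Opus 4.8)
The plan is to recognize this as the standard identification of sections of an associated bundle with equivariant functions on the total space of the corresponding principal bundle. Recall that a Cartan gauge $(\sigma,U)$ is a local section of the principal $H$-bundle $\pi\colon G\to\M$, and hence simultaneously trivializes $G\to\M$ and the associated bundle $G\times_{\! H}\g\to\M$ over $U$. I would prove the proposition in three steps: (i) the formula defines a smooth section over each gauge domain; (ii) the section is independent of the gauge, so the local pieces glue; (iii) the resulting map $x\mapsto X$ is a bijection, by exhibiting its inverse.

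For (i), note that under the bundle projection $G\times_{\! H}\g\to\M$ the class of $(\sigma(u),x(\sigma(u)))$ maps to $\sigma(u)\cdot o=\pi(\sigma(u))=u$, so $u\mapsto\sigma(u)\times_{\! H}x(\sigma(u))$ is a section over $U$; it is smooth since $\sigma$ and $x$ are smooth and the quotient map $G\times\g\to G\times_{\! H}\g$ is smooth. For (ii), if $(\sigma',U)$ is another gauge then $\sigma'(u)=\sigma(u)h(u)$ for a smooth $h\colon U\to H$ (both are sections of the same $H$-bundle), and the defining equivariance $x(gh)=\Ad_{h^{-1}}x(g)$ gives $\sigma'(u)\times_{\! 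H}x(\sigma'(u))=\sigma(u)h(u)\times_{\! H}\Ad_{h(u)^{-1}}x(\sigma(u))$, which equals $\sigma(u)\times_{\! H}x(\sigma(u))$ by the relation $(g,v)\sim(gh,\Ad_{h^{-1}}v)$. Thus the local formulas agree on overlaps and patch to a global section $X\in\Gamma(G\times_{\! H}\g)$.

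For (iii), I would construct the inverse directly. Given $X\in\Gamma(G\times_{\! H}\g)$ and $g\in G$, the value $X(\pi(g))$ lies over $g\cdot o$ and so admits a representative with first coordinate $g$; this representative is unique because $(g,v_1)\sim(g,v_2)$ forces $gh=g$, hence $h=e$ and $v_1=v_2$. Define $x(g)$ to be the unique $v\in\g$ with $X(\pi(g))=g\times_{\! H}v$. Then $\pi(gh)=\pi(g)$ together with $g\times_{\! H}x(g)=gh\times_{\! H}\Ad_{h^{-1}}x(g)$ shows $x(gh)=\Ad_{h^{-1}}x(g)$, i.e.\ $x\in\Omega^0_H(G,\g)$, and smoothness of $x$ follows by writing it in a gauge as $x(g)=\Ad_{(\sigma(u)^{-1}g)^{-1}}w(u)$ with $u=\pi(g)$ and $w$ the smooth $\g$-component of $X$ in the trivialization defined by $\sigma$. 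A short check then shows the two constructions are mutually inverse: passing $x\mapsto X\mapsto x'$ one gets $x'(\sigma(u))=x(\sigma(u))$ by uniqueness of representatives, hence $x'=x$ by equivariance; passing $X\mapsto x\mapsto X'$ one gets $X'(u)=\sigma(u)\times_{\! H}x(\sigma(u))=X(u)$ by construction of $x$.

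There is no real obstacle here; the proposition is routine once local triviality of the principal bundle is invoked. The only points that need genuine care are the well-definedness claims: uniqueness of a representative of a point of $G\times_{\! H}\g$ with prescribed first coordinate, the compatibility of the local formulas under the transition cocycle $h$, and checking that the inverse map really lands in the equivariant subspace $\Omega^0_H(G,\g)$ rather than just in $\Omega^0(G,\g)$. The smoothness statements in both directions reduce to the fact that a Cartan gauge trivializes the associated bundle $G\times_{\! H}\g\to\M$ over its domain.
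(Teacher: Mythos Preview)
Your proposal is correct and follows essentially the same route as the paper: gauge independence is shown via the equivalence $(g,v)\sim(gh,\Ad_{h^{-1}}v)$ combined with the equivariance of $x$, and the inverse is obtained by reading off the $\g$-component of $X$ in a chosen representative and checking it lands in $\Omega^0_H(G,\g)$. The paper's proof is terser---it recovers $x$ first on the image $\sigma(U)$ and then extends along fibres by the equivariance rule, whereas you define $x(g)$ globally at once via the unique representative with first coordinate $g$---but these are the same argument, and your additional checks (smoothness, mutual inverse) simply make explicit what the paper leaves to the reader.
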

\begin{proof}The map $x\mapsto X$ is independent of $\sigma$ since $\sigma(u)h\times x(\sigma(u)h)\sim \sigma(u)\times \Ad_h x(\sigma(u)h) = 
\sigma(u)\times x(\sigma(u))$. Given $X$ we recover $x$ on $\sigma(U)\subset G$, and reconstruct $x$ on the fibres of $H\rightarrow G\rightarrow\M$
by $x(gh) = \Ad_{h^{-1}}x(g)$.
\end{proof}

Thus, to sum up, we have an identification $\Omega^0_H(G,\g)\simeq \Gamma(G\times_{\! H}\g)$. An $X\in \Gamma(G\times_{\! H}\g)$ corresponds to a
vector field ${\rho_{\! M}}_*\opr X\in \XM$ and a diffeomorphism ${\rho_{\! M}}\opr X\in \mbox{Diff}(\M)$. Furthermore, given a Cartan gauge $(\sigma,U)$, we
can map a vector field $Y\in \XM$ to $\sigma_*\opr Y\in \Gamma(G\times_{\! H}\g)$, but this map depends on the gauge $\sigma$.
Finally, there is a post-Lie algebra also associated with this view of homogeneous spaces:

\begin{proposition}\label{prop:atiyahpostlie} $\{\Omega_H^0(G,\g),-[\cdot,\cdot],\vartr\}$, with $\vartr$ and $[\cdot,\cdot]$ defined in Prop.~\ref{prop:rightMC},  is post-Lie.
\end{proposition}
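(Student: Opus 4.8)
The plan is to realize $\{\Omega_H^0(G,\g),-[\cdot,\cdot],\vartr\}$ as a sub-post-Lie algebra of $\{\Omega^0(G,\g),-[\cdot,\cdot],\vartr\}$, which is already known to be post-Lie by Proposition~\ref{prop:rightMC}. Since $\Omega_H^0(G,\g)$ is by~(\ref{eq:omegaH}) a linear subspace of $\Omega^0(G,\g)$, it is enough to check that it is closed under the two structure maps $[\cdot,\cdot]$ and $\vartr$; the post-Lie identities~(\ref{eq:postlie1})--(\ref{eq:postlie2}) are then inherited verbatim from the ambient algebra, with no new sign bookkeeping, since the torsion bracket $-[\cdot,\cdot]$ is simply the ambient one restricted.

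Closure under the torsion bracket is immediate. For $x,y\in\Omega_H^0(G,\g)$ and $h\in H$ we compute, using that $\Ad_{h^{-1}}$ is a Lie algebra automorphism of $\g$,
\[
[x,y](gh)=[x(gh),y(gh)]_\g=[\Ad_{h^{-1}}x(g),\Ad_{h^{-1}}y(g)]_\g=\Ad_{h^{-1}}[x,y](g),
\]
so $[x,y]\in\Omega_H^0(G,\g)$.

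The real content is closure under $\vartr$, and the key lemma I would prove is that whenever $x\in\Omega_H^0(G,\g)$ the vector field $\rho_*(x)\in\XG$ is invariant under right translation $R_h\colon g\mapsto gh$ for every $h\in H$, i.e.\ $(R_h)_*\rho_*(x)=\rho_*(x)\opr R_h$. This follows directly from the definition~(\ref{eq:rhomap1}) and the equivariance $x\opr R_h=\Ad_{h^{-1}}\opr x$: pushing the defining curve of $\rho_*(x)(g)$ forward by $R_h$ gives $t\mapsto g\exp(tx(g))h=gh\exp(t\,\Ad_{h^{-1}}x(g))=gh\exp(tx(gh))$, whose derivative at $t=0$ is exactly $\rho_*(x)(gh)$. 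Granting this, since $x\vartr y=\rho_*(x)(y)$ is the directional derivative $dy(\rho_*(x))$ of the $\g$-valued function $y$, the chain rule gives
\[
(x\vartr y)(gh)=dy_{gh}\big((R_h)_*\rho_*(x)(g)\big)=d(y\opr R_h)_g\big(\rho_*(x)(g)\big)=\Ad_{h^{-1}}\,dy_g\big(\rho_*(x)(g)\big)=\Ad_{h^{-1}}(x\vartr y)(g),
\]
where in the third equality we used $y\opr R_h=\Ad_{h^{-1}}\opr y$ together with the fact that $\Ad_{h^{-1}}$ is a fixed linear endomorphism of $\g$, so its differential is itself. Hence $x\vartr y\in\Omega_H^0(G,\g)$, completing the verification that $\Omega_H^0(G,\g)$ is a sub-post-Lie algebra, and therefore post-Lie.

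I expect the only genuine obstacle to be the right-invariance lemma for $\rho_*(x)$ and the bookkeeping of how that invariance passes through the Lie derivative $\rho_*(x)(y)$; once that is in hand everything is formal. An alternative would be to rerun the pointwise coordinate computation of Proposition~\ref{prop:homogen} on equivariant representatives, but the subalgebra argument above is shorter and makes transparent why the structure survives the quotient by $H$.
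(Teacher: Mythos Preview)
Your proof is correct and follows exactly the same strategy as the paper: the paper's proof simply asserts that a straightforward computation gives $(x\vartr y)(gh)=\Ad_{h^{-1}}(x\vartr y)(g)$ and $[x(gh),y(gh)]=\Ad_{h^{-1}}[x(g),y(g)]$, so that $\Omega_H^0(G,\g)$ is closed under both operations and inherits the post-Lie structure from Proposition~\ref{prop:rightMC}. You have merely supplied the details of that computation, in particular the right-$H$-invariance of $\rho_*(x)$, which the paper leaves implicit.
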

\begin{proof}A straightforward computation reveals that $(x\vartr y)(gh) = \Ad_{h^{-1}} (x\vartr y) (g)$ and\\ $[x(gh),y(gh)] = \Ad_{h^{-1}}[x(g),y(g)]$.
Hence $\vartr$ and $[\cdot,\cdot]$ are well defined on the subspace $\Omega^0_H(G,\g)$.
\end{proof}

What is the parallel transport in this post-Lie algebra? Note that  $x\in\Omega^0_H(G,\g)$ defines a vector field ${\rho_{\! M}}_*x\in \XG$ with a flow $\Phi_t\colon G\rightarrow G$ satisfying 
\begin{equation}\Phi_t(gh) = \Phi_t(g)h,\quad \mbox{for all $h\in H$. }\label{eq:flow1}\end{equation}
Define parallel transport along the flow $\Phi_t^*\colon \Omega^0_H(G,\g)\rightarrow \Omega^0_H(G,\g)$ as
\begin{equation}\Phi_t^*y(g) := y(\Phi_t(g)).\label{eq:ptdef}
\end{equation}

\begin{proposition}
Parallel transport in $\Omega^0_H(G,\g)$ is given as 
\begin{equation}\Phi^* y=\exp(x\vartr)y=\exp(x\vartr)y=y+x\vartr y + \frac12(x\vartr(x\vartr y))+\frac16(x\vartr(x\vartr(x\vartr y)))+\cdots .\label{eq:parallel}\end{equation}
where $\Phi^*\colon G\rightarrow G$ is the $t=1$ flow of ${\rho_{\! M}}_*x\in \XG$.
\end{proposition}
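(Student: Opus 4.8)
The plan is to reduce the statement to solving a linear initial-value problem in $\Omega^0_H(G,\g)$. Fix $x\in\Omega^0_H(G,\g)$, let $\Phi_t$ be the (local) flow on $G$ of the associated vector field $\rho_*(x)\in\XG$ from~(\ref{eq:rhomap1}) (so that $\Phi=\Phi_1$), and recall from~(\ref{eq:ptdef}) that parallel transport is $(\Phi_t^*y)(g)=y(\Phi_t(g))$. I would first show that $t\mapsto\Phi_t^*y$ satisfies
\[\tfrac{d}{dt}\,\Phi_t^*y = x\vartr(\Phi_t^*y),\qquad \Phi_0^*y=y,\]
and then identify its solution as $\exp(t\,x\vartr)y$, which at $t=1$ is exactly~(\ref{eq:parallel}).

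Establishing the ODE has two short steps. \emph{Generator.} Differentiating $(\Phi_s^*z)(g)=z(\Phi_s(g))$ at $s=0$ and using $\left.\tfrac{d}{ds}\right|_{s=0}\Phi_s(g)=\rho_*(x)(g)$ together with the chain rule shows that, for every $z\in\Omega^0_H(G,\g)$, $\left.\tfrac{d}{ds}\right|_{s=0}\Phi_s^*z$ equals the derivative of $z$ along $\rho_*(x)$, i.e.\ $x\vartr z$ by the definition of $\vartr$ in Proposition~\ref{prop:rightMC}. \emph{Propagation.} Since $\{\Phi_t\}$ is a one-parameter (local) group, $\Phi_{t+s}=\Phi_t\circ\Phi_s$, and pullback is contravariant, so $\Phi_{t+s}^*=\Phi_s^*\circ\Phi_t^*$; applying the generator computation with $z:=\Phi_t^*y$ gives $\tfrac{d}{dt}\Phi_t^*y=\left.\tfrac{d}{ds}\right|_{s=0}\Phi_s^*(\Phi_t^*y)=x\vartr(\Phi_t^*y)$. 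Everything stays in $\Omega^0_H(G,\g)$: $\Phi_t^*y$ does by the equivariance~(\ref{eq:flow1}), and $x\vartr(\cdot)$ preserves $\Omega^0_H(G,\g)$ by Proposition~\ref{prop:atiyahpostlie}. As $x\vartr(\cdot)$ is a fixed, $t$-independent linear operator, differentiating the ODE repeatedly at $t=0$ gives $\left.\tfrac{d^k}{dt^k}\right|_{t=0}\Phi_t^*y=(x\vartr)^ky$, so the Taylor expansion of $t\mapsto\Phi_t^*y$ at $t=0$ is $\sum_{k\ge0}\tfrac{t^k}{k!}(x\vartr)^ky=\exp(t\,x\vartr)y$, and $t=1$ gives the claim.

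The only genuinely delicate point is analytic rather than algebraic: in the smooth category $\Phi_1$ is only defined locally and the series $\exp(x\vartr)y$ need not converge, so the identity~(\ref{eq:parallel}) should be read as an equality of Taylor expansions at $t=0$ — equivalently in the analytic category, or as formal series, which is the setting relevant to Lie--Butcher series — and the computation above establishes precisely this. If one prefers to avoid the flow altogether, one can instead set $u(t):=\exp(t\,x\vartr)y$, check that it solves the same linear ODE with the same initial value $y$, and invoke uniqueness of solutions (valid formally and in the analytic category).
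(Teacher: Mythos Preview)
Your proof is correct and follows the same approach as the paper, whose entire argument is the single line ``This follows from the Taylor expansion of~(\ref{eq:ptdef}).'' You have simply spelled out what that Taylor expansion actually entails: the flow property gives the linear ODE $\tfrac{d}{dt}\Phi_t^*y = x\vartr(\Phi_t^*y)$, and iterated differentiation at $t=0$ yields the exponential series. Your added remarks on equivariance and on reading~(\ref{eq:parallel}) as an equality of formal Taylor series are welcome clarifications that the paper leaves implicit.
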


\begin{proof}This follows from the Taylor expansion of~(\ref{eq:ptdef}).
\end{proof}

\begin{remark}The post-Lie structure captures algebraically both infinitesimal aspects of  homogeneous spaces and also finite motions such as flows and parallel transport. It is therefore clear that the post-Lie structure cannot carry over to the quotient space $\Gamma\left(G\times_{\! H}\g/\h\right)\simeq\XM$: both $\vartr$ and $[\cdot,\cdot]$ change under an isotropy change $x\mapsto x+x^\perp$ where $x,x^\perp\in \Omega^0_H(G,\g)$ and  $x^\perp(g)\in \h$. Hence this post-Lie structure does not carry over to $\XM$, except when the action is free. 
\end{remark}

\begin{remark} 
Most of the work on numerical integration on homogeneous spaces over the last
15 years follow the formulation of~\cite{munthe-kaas1997nio}, which can be cast into the post-Lie algebra structure of the right MC form discussed in Section~\ref{sec:lgi}.
In the present section, we have detailed an alternative post-Lie algebra structure on $\M$, derived from the left MC form. Combined with moving frame algorithms for choosing the gauge, we believe that this presents an interesting new alternative formulation, which may incorporate the geometry of the underlying problem into the choice of gauge (isotropy) in numerical integration schemes. 
\end{remark}

\begin{remark}The vector bundle $G\times_H\g\rightarrow\M$ with the  "anchor map" ${\rho_{\! M}}_*\colon G\times_H\g\rightarrow T\M$ is a mathematical structure known as the
 \emph{Atiyah Lie algebroid}~\cite{mackenzie2005general}. We have shown that there is a natural post-Lie algebra structure associated with this structure.
This motivates a general definition of a \emph{post-Lie algebroid}.
\end{remark}

\subsubsection{Post-Lie algebroids}\label{sec:postliealgebroid} All the examples above fit into a common structure we will discuss in the language of linear connections on 
Lie algebroids~\cite{degeratulinear,mackenzie2005general}. 
 
 \begin{definition} A \emph{Lie algebroid} is a triple $\{E,\llbracket\cdot,\cdot\rrbracket_E,\rho_*\}$ consisting of a vector bundle $E\rightarrow \M$, a Lie bracket $\llbracket\cdot,\cdot\rrbracket_E$ on the module of sections
 $\Gamma(E)$ and a morphism of vector bundles $\rho_*\colon E\rightarrow\TM$ called the \emph{anchor}. The anchor and the bracket must satisfy the Leibniz rule
  \begin{equation}\llbracket x,\phi y\rrbracket_E = \phi \llbracket x,y\rrbracket_E + (\rho_*\opr x)(\phi) y, \label{eq:laleib}
 \end{equation}
 for all $x,y\in \Gamma(E)$ and all $\phi\in C^\infty(\M,\RR)$, where $(\rho_*\opr x)(\phi)$ denotes the Lie derivative of $\phi$ along of the vector field $\rho_*\opr x\in \XM$.
 \end{definition}
 
 The definition implies that the anchor $\rho_*$ is a Lie algebra morphism, sending the bracket $\llbracket\cdot,\cdot\rrbracket_E$ on $\Gamma(E)$ to the Jacobi--Lie bracket $\llbracket\cdot,\cdot\rrbracket_J$ on $\XM$.
 
 A \emph{linear connection} on a Lie algebroid is defined as a bilinear product $\tr\colon \Gamma(E)\times \Gamma(E)\rightarrow\Gamma(E)$
 such that
 \begin{eqnarray}
 (\phi x)\tr y & = & \phi (x\tr y)\\
 x\tr (\phi y) & = & (\rho_*\opr x)(\phi) y
 \end{eqnarray}
 for all $x,y\in \Gamma(E)$ and $\phi\in C^\infty(\M,\RR)$. A Lie bracket $[\cdot,\cdot]$ on $\Gamma(E)$ is called \emph{tensorial} if it is $C^\infty(\M,\RR)$-linear, i.e.\ 
 \begin{equation}\label{eq:tensorial}[x,\phi y] = \phi [x,y] = [\phi x,y], 
 \end{equation}
 for all $x,y\in \Gamma(E)$, $\phi\in C^\infty(\M,\RR)$. The following definition is novel.
 
 \begin{definition} A \emph{post-Lie algebroid} is a four-tuple $\{E,[\cdot,\cdot],\tr,\rho_*\}$, where $E\rightarrow \M$ is a vector bundle, $\rho_*\colon E\rightarrow \TM$ is a morphism of vector bundles, 
 $[\cdot,\cdot]$ is a tensorial Lie bracket on $\Gamma(E)$ and $\tr$ is a linear connection on $\Gamma(E)$ such that $\{\Gamma(E),[\cdot,\cdot],\tr\}$ is a post-Lie algebra.
 \end{definition}
 
 \begin{proposition} Let $\{E,[\cdot,\cdot],\tr,\rho_*\}$ be a post-Lie algebroid. Define the Lie bracket $\llbracket x,y\rrbracket_E:= x\tr y-y\tr x + [x,y]$ on $\Gamma(E)$, then $\{E,\llbracket\cdot,\cdot\rrbracket_E,\rho_*\}$
 is a Lie-algebroid.
 \end{proposition}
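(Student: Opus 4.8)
The plan is to verify directly the two conditions in the definition of a Lie algebroid for the triple $\{E,\llbracket\cdot,\cdot\rrbracket_E,\rho_*\}$: first that $\llbracket\cdot,\cdot\rrbracket_E$ is a Lie bracket on the module of sections $\Gamma(E)$, and second that it satisfies the Leibniz rule~(\ref{eq:laleib}) with respect to the given anchor $\rho_*$. The first of these is free of charge: by hypothesis $\{\Gamma(E),[\cdot,\cdot],\tr\}$ is a post-Lie algebra, so Proposition~\ref{prop:jlbracket} applies verbatim and already tells us that $\llbracket x,y\rrbracket_E = x\tr y - y\tr x + [x,y]$ is skew-symmetric and satisfies the Jacobi identity. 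Hence the whole statement reduces to checking the Leibniz rule, and for that we only need the module-theoretic axioms of a post-Lie algebroid, not the post-Lie relations themselves.

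For the Leibniz rule, fix $x,y\in\Gamma(E)$ and $\phi\in C^\infty(\M,\RR)$ and expand
\[
\llbracket x,\phi y\rrbracket_E = x\tr(\phi y) - (\phi y)\tr x + [x,\phi y].
\]
Now feed in the three structural properties of a post-Lie algebroid one at a time: the Leibniz axiom of the linear connection, $x\tr(\phi y) = (\rho_*\opr x)(\phi)\,y + \phi\,(x\tr y)$; the $C^\infty(\M,\RR)$-linearity of $\tr$ in its first slot, $(\phi y)\tr x = \phi\,(y\tr x)$; and the tensoriality~(\ref{eq:tensorial}) of the torsion bracket, $[x,\phi y] = \phi\,[x,y]$. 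Substituting and collecting the three $\phi$-linear terms gives
\[
\llbracket x,\phi y\rrbracket_E = (\rho_*\opr x)(\phi)\,y + \phi\bigl(x\tr y - y\tr x + [x,y]\bigr) = \phi\,\llbracket x,y\rrbracket_E + (\rho_*\opr x)(\phi)\,y,
\]
which is precisely~(\ref{eq:laleib}). This establishes that $\{E,\llbracket\cdot,\cdot\rrbracket_E,\rho_*\}$ is a Lie algebroid; in particular $\rho_*$ then automatically intertwines $\llbracket\cdot,\cdot\rrbracket_E$ with the Jacobi--Lie bracket on $\XM$, consistently with the note following Proposition~\ref{prop:jlbracket}, where in the concrete geometric examples this bracket was identified with the Jacobi--Lie bracket.

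There is no genuine obstacle here: every step above is a single substitution, and the only point that needs care is bookkeeping — tracking the single non-tensorial term $(\rho_*\opr x)(\phi)\,y$ produced by $x\tr(\phi y)$ and confirming that the remaining terms recombine into $\phi\,\llbracket x,y\rrbracket_E$ exactly, with no leftover torsion- or curvature-type correction. Because $[\cdot,\cdot]$ is tensorial it contributes nothing to the derivation term, so the verification is insensitive to the Lie-algebra structure of $[\cdot,\cdot]$ beyond skew-symmetry and $C^\infty(\M,\RR)$-linearity; all the analytic content of the Leibniz rule is carried by the connection part $x\tr y - y\tr x$ of $\llbracket\cdot,\cdot\rrbracket_E$.
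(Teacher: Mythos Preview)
Your proof is correct and follows exactly the same approach as the paper: invoke Proposition~\ref{prop:jlbracket} for the Lie bracket property, then verify the Leibniz rule~(\ref{eq:laleib}) directly. The paper merely asserts that the Leibniz rule ``is easily verified'', whereas you spell out the computation using the connection axioms and the tensoriality of $[\cdot,\cdot]$; this is precisely the intended verification.
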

 
 \begin{proof} Proposition~\ref{prop:jlbracket} shows that $\llbracket\cdot,\cdot\rrbracket_E$ is a Lie bracket. The Leibniz rule~(\ref{eq:laleib}) is easily verified.
 \end{proof}
 
 The definition of torsion~(\ref{eq:torsion}) and curvature~(\ref{eq:curvature}) is valid also for linear connections on a Lie algebroid, with $\llbracket\cdot,\cdot\rrbracket_J$ replaced by $\llbracket\cdot,\cdot\rrbracket_E$.
 
 \begin{proposition} Let $\{E,\llbracket\cdot,\cdot\rrbracket_E,\rho_*\}$ be a Lie algebroid and $\tr$ a linear connection on $\Gamma(E)$ with zero curvature $R=0$ and constant torsion $\nabla T=0$. Let
 $[x,y] = -T(x,y) = \llbracket x,y\rrbracket_E - x\tr y +y\tr x$. Then  $\{E,[\cdot,\cdot],\tr,\rho_*\}$ is a post-Lie algebroid.
 \end{proposition}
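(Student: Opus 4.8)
The plan is to transcribe, in the algebroid setting, the derivation of post-Lie algebras from flat, constant-torsion connections carried out in Section~\ref{subsec:conn}; the point is that the torsion~(\ref{eq:torsion}), the curvature~(\ref{eq:curvature}) and the Bianchi identities~(\ref{eq:bianchi1})--(\ref{eq:bianchi2}) all make sense verbatim for a linear connection $\tr$ on a Lie algebroid once the Jacobi--Lie bracket $\llbracket\cdot,\cdot\rrbracket_J$ is replaced by $\llbracket\cdot,\cdot\rrbracket_E$, as already remarked. Three things then have to be checked: that $[\cdot,\cdot]=-T$ is a Lie bracket, that it is tensorial in the sense of~(\ref{eq:tensorial}), and that the pair $(\tr,[\cdot,\cdot])$ satisfies the post-Lie axioms~(\ref{eq:postlie1})--(\ref{eq:postlie2}).

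First I would note that $[\cdot,\cdot]$ is skew-symmetric because $T$ is, and that the Jacobi identity is precisely the first Bianchi identity~(\ref{eq:bianchi1}) specialised to $R=0$ and $\nabla T=0$: it collapses to $\csum(T(T(x,y),z))=0$, which upon substituting $T(x,y)=-[x,y]$ becomes $\csum([[x,y],z])=0$. For tensoriality I would expand $T(x,\phi y)=x\tr(\phi y)-(\phi y)\tr x-\llbracket x,\phi y\rrbracket_E$ and apply the Leibniz rules for the linear connection $\tr$ together with the Lie algebroid Leibniz rule~(\ref{eq:laleib}); the derivation terms of the form $(\rho_*\circ x)(\phi)\,y$ cancel and one is left with $\phi\,T(x,y)$, while $T(\phi x,y)=\phi\,T(x,y)$ follows by skew-symmetry. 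Hence $[\cdot,\cdot]=-T$ is a tensorial Lie bracket on $\Gamma(E)$.

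Finally, axiom~(\ref{eq:postlie2}) is immediate from the curvature formula~(\ref{eq:curvature}) with $R=0$, which rearranges to $[x,y]\tr z=a_\tr(x,y,z)-a_\tr(y,x,z)$, and axiom~(\ref{eq:postlie1}) follows from the fact that, $T$ being a tensor and $\tr$ a connection, $x\tr T(y,z)=(\nabla_xT)(y,z)+T(x\tr y,z)+T(y,x\tr z)$; under $\nabla_xT=0$ and negation this gives $x\tr[y,z]=[x\tr y,z]+[y,x\tr z]$. Thus $\{\Gamma(E),[\cdot,\cdot],\tr\}$ is a post-Lie algebra and, $\rho_*$ being a vector bundle morphism by hypothesis, $\{E,[\cdot,\cdot],\tr,\rho_*\}$ is a post-Lie algebroid. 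I do not expect a genuine obstacle here; the only point deserving care is the bookkeeping verification that the Bianchi identities and the covariant-derivative formula for $T$ really do transfer to the algebroid setting, i.e.\ that their derivations use of $\llbracket\cdot,\cdot\rrbracket_J$ only its Jacobi identity and its Leibniz compatibility with $\rho_*$, both of which $\llbracket\cdot,\cdot\rrbracket_E$ possesses.
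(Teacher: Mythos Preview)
Your proposal is correct and follows essentially the same route as the paper: verify the post-Lie relations~(\ref{eq:postlie1})--(\ref{eq:postlie2}) from $R=0$ and $\nabla T=0$, check tensoriality of $-T$ by expanding and cancelling the Leibniz terms, and establish the Jacobi identity for $-T$. The only cosmetic difference is that the paper phrases the Jacobi step as a direct ``lengthy computation'' rather than packaging it as the first Bianchi identity in the algebroid setting, but the content is the same, and your caveat about verifying that Bianchi transfers (using only the Jacobi identity and Leibniz rule of $\llbracket\cdot,\cdot\rrbracket_E$) is exactly the right thing to flag.
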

 
 \begin{proof} $R=0$ and $\nabla T=0$ imply the post-Lie relations~(\ref{eq:postlie1})-(\ref{eq:postlie2}). A straightforward computation shows that $[\cdot,\cdot]$ satisfies~(\ref{eq:tensorial}), so it is tensorial.
 The Jacobi rule $\csum([[x,y],z])=0$ is verified by a lengthy computation.
 \end{proof}

\subsection{Moving frames} Cartan's method of moving frames provides an important tool for choosing gauges that are naturally derived from the geometry of e.g.\ a differential equation or other geometric objects such as curves and surfaces in a homogeneous space (Klein geometry).  Peter Olver and his co-workers have developed  moving frames into a powerful  tool in applied and computational mathematics~\cite{mansfield2010apg,olver2005aso}. See also~\cite{gardner1989method} for applications of moving frames in control theory.

Let $\M$ be a manifold and $G$ a Lie group, with algebra $\g$, that acts on $\M$ on the left. We do not require the action to be transitive, so $\M$ needs not be a homogeneous space.

\begin{definition}A left moving frame is a map $\sigma\colon \M\rightarrow G$ such that
\[\sigma(g\cdot u) = g\sigma(u)\quad\mbox{for all $g\in G$ and $u\in\M$,}\]
a right moving frame is a map $r\colon \M\rightarrow G$ such that
\[r(g\cdot u) = r(u)g^{-1}\quad\mbox{for all $g\in G$ and $u\in\M$.}\]         
\end{definition}
If $r$ is a right moving frame then $\sigma(u) = r(u)^{-1}$ (inverse in $G$) is a left moving frame. 
Moving frames exist if and only if the $G$ action on $\M$ is free and regular. In that case, moving frames can be constructed (locally) as follows~\cite{olver2005aso}:
\begin{enumerate}
\item Choose a submanifold $\K\subset\M$ which is transverse to the $G$ orbits and of the maximal dimension $p=\dim(\M)-\dim(G)$. Locally, there is one point in $\K$ for each orbit, and each
orbit intersect $\K$ in one point. In coordinates, $\K$ is often chosen by setting $d=\dim(G)$ of the coordinates to constant values. 
\item A right moving frame $r$ is found by solving the normalization equations
$r(u)u \in \K$ for $r(u)$. 
\item A left moving frame is obtained by inverting $r$.
\end{enumerate}

If the action is not free, there is a standard procedure of obtaining a free and regular action  by \emph{prolongation} of the group action, i.e.\ we extend $\M$ to a jet-space $J^k(\M)$, which is the geometrical way of saying that we consider the space of all curves in $\M$ represented by Taylor expansions up to order $k$. The prolongation of the group action is the natural induced action of $G$ on (Taylor expansions of) curves. Coordinates on the jet-space are given by the (higher order)  derivatives of curves. We can always obtain a free regular action (and thus a moving frame) by prolongation. Thus, by this construction we find a left moving frame
$\sigma\colon J^k(\M)\rightarrow G$.

Moving frames are closely related to Cartan gauges on a homogeneous space $\M$. Let $G$ act transitively on $\M$ with an isotropy  subgroup $H$, and
let $\sigma\colon\M\rightarrow\G$ be a local section of the bundle $\pi\colon \G\rightarrow\M$. Note that $\sigma$ is a section if and only if
\begin{equation}\sigma(g\dpr u) = g\sigma(u)h(u)\quad\mbox{for  $h\colon \M\rightarrow H$.}
\end{equation}
Thus it is a left moving frame, up to isotropy.
Such a map is also called a \emph{partial}  moving frame~\cite{lewis05cfg}. 

Thus, the theory of moving frames (full and partial) provides geometric ways of constructing sections $(\sigma,U)$ of $G\rightarrow\M$, hence also
 geometric ways of fixing isotropy through the map $\sigma_*\colon \TM\rightarrow  \Gamma(G\times_{\! H}\g)\simeq \Omega^0_H(G,\g)$ in~(\ref{eq:sigmastar}).
On $\Omega^0_H(G,\g)$ we have all the tools we need to do numerical integration and analysis of numerical integration schemes. The details of such algorithms are
subject to future research. We see at least two useful ways to proceed in choosing $\sigma$.
\begin{itemize}
\item By prolongation of the group action we can obtain a full moving frame $\sigma\colon J^k(\M)\rightarrow G$. To solve a differential equation $u'=F(u)$, $F\in\XM$, we must also prolong $F$ to the jet-bundle. This should be a very attractive numerical method in cases where we can compute the $k$-th derivatives of $F$, either by computer algebraic means, or by automatic differentiation.
\item In the case where $\M$ is a symmetric space, there is a canonical choice of a section $\sigma\colon \M\rightarrow G$. In this case there exists a canonical splitting $\g = \h\oplus \kk$, where $\h$ is a subalgebra and $\kk$ is a Lie triple system (LTS), see~\cite{loos1969symmetric}. The infinitesimal gauge $\omega_\sigma$ takes values in $\kk$, and thus exponentials need only be computed on the LTS. Efficient algorithms for computing exponentials on an LTS are discussed in~\cite{zanna2002generalized}. This theory opens up the possibility of new classes of numerical integration on symmetric spaces.
\end{itemize}

\section{The algebraic structure of post-Lie and D-algebras}
In this section we discuss the algebraic structure of general post-Lie algebras $\{\A,[\cdot,\cdot],\tr\}$. Various aspects of this theory can also be found in~\cite{lundervold2011bea,lundervold2009hao,munthe-kaas2003oep,munthe-kaas2008oth}. Unlike previous work, we here develop the core theory from the axiomatic definition of a post-Lie algebra, and establish the functorial relationship between post-Lie algebras and their enveloping D-algebras. Moreover, we find that a magmatic view of planar trees and forests gives rise to new recursive formulas for various algebraic operations, which simplify computer implementations.

\subsection{Free post-Lie algebras}
In \cite{chapoton2001pla} Chapoton and Livernet gave an explicit description of the free pre-Lie algebra in terms of decorated rooted trees and grafting. In this section we will see that there is a similar description of the free post-Lie algebra. In fact, we will show that the free post-Lie algebra can be described as the free Lie algebra over planar rooted trees, extended with a connection given by left grafting of trees. Furthermore, we will relate post-Lie algebras to D-algebras, studied in connection with numerical Lie group integration~\cite{ lundervold2009hao,munthe-kaas2008oth}. The universal enveloping algebra of a post-Lie algebra is a D-algebra, and the post-Lie algebra is recovered as the derivations in the D-algebra.


\paragraph{Trees.} The algebraic definition of a \emph{magma} is a set $\C$ with a binary operation $\star$ without any algebraic relations imposed. The free magma over $\C$ consists of all possible ways to parenthesize binary operations on $\C$. There are several isomorphic ways of representing a free magma in terms of trees: as binary trees with colored leaves or, as we will do in the sequel, as planar trees with colored nodes. See~\cite{ebrahimi2012magnus} for an isomorphism between these representations.
The set $\C$ will henceforth be called the set of \emph{colors}. We let $\OTC$ be the set of all planar (or ordered)\footnote{Trees with different orderings of the branches are considered different, as when pictured in the plane.} rooted trees with nodes colored by $\C$. Formally, we define it as the free magma 
\[\OTC := \operatorname{Magma}(\C).\]

On trees we interpret $\star$ as the \emph{Butcher product}~\cite{butcher1972aat}: $\tau_1\star\tau_2=\tau$ is a tree where the root of the tree $\tau_1$ is attached to the left part of the root of the tree $\tau_2$. For example: 
\[\AabB \star \aaababbb = \, \aAabBaababbb \ \ =\  (\ab\star\AB)\star((\ab\star(\ab\star\ab))\star\ab) .\]
If $\C=\{\ab\}$ has only one element, we write $\OT:=\OT_{\{\ab\}}$. The first few elements of $\OT$ are:
\[\OT = \left\{\ab, \aabb,\aaabbb, \aababb, \aaaabbbb,\aaababbb,\aaabbabb,
 \aabaabbb, \aabababb,\ldots
\right\}.\]
Note that any $\tau\in\OTC$ has a unique maximal right factorization 
\[\tau = \tau_1\star(\tau_2\star(\cdots(\tau_k\star c))),\quad \mbox{where $c\in \C$ and $\tau_1,\ldots,\tau_k\in \OTC$.}\]
Here $c$ is the root, $k$ is the \emph{fertility} of the root and  $\tau_1,\ldots,\tau_k$ are the {branches} of the root. Let $\k$ be a field of characteristic zero and write $\kOTC$ for the free $\k$-vector space over the set $\OTC$, i.e.\ all $\k$-linear combinations of  trees. We define \emph{left grafting}\footnote{Various notations for similar grafting products are found in the literature, e.g.\ $u\tr v = u[v] =u\curvearrowright v$.}
 $\tr\colon \OTC\xpr\OTC\rightarrow \kOTC$ by the recursion
\begin{equation}\label{eq:graft} 
\begin{split}
\tau \tr c &:=  \,\,\tau\star c, \quad\mbox{for $c\in \C$}\\
\tau\tr (\tau_1\star(\tau_2\star(\cdots(\tau_k\star c))))  &:=\,\,  
\tau\star(\tau_1\star(\tau_2\star(\cdots(\tau_k\star c))))\\ 
&\,\,+(\tau\tr \tau_1)\star (\tau_2\star(\cdots(\tau_k\star c)))\\ 
&\,\,+\tau_1\star((\tau\tr\tau_2)\star(\cdots(\tau_k\star c)))\\
&\,\,+\cdots  \\
&\,\,+\tau_1\star(\tau_2\star(\cdots((\tau\tr\tau_k)\star c))) .
\end{split}
\end{equation}
Thus $\tau_1\tr\tau_2$ is the sum of all the trees resulting from attaching the root of $\tau_1$ to all the nodes of the tree $\tau_2$ from the left. Example:
\begin{equation*}
  \AabB\tr\aababb=\aAabBababb +\aaAabBbabb + \aabaAabBbb.
\end{equation*}

%
%
%

\paragraph{Free Lie algebras of trees.}
Let $\g=\fla(\OTC)$ denote the free Lie algebra over the set $\OTC$ \cite{reutenauer93fla}. For $\C=\{\ab\}$, a Lyndon basis is given up to order four as~\cite{munthe-kaas2003oep}:
\[\fla(\OTC) = \k\left\{\ab, \aabb,\aaabbb, \aababb, \left[\aabb,\ab\right], \aaaabbbb,\aaababbb,\aaabbabb,
\left[ \aaabbb,\ab \right], \aabaabbb, \aabababb,\left[\aababb,\ab\right], \left[\left[\aabb,\ab\right],\ab\right],\ldots
\right\}.
\]

\begin{proposition}\label{prop:fct}
Let the free Lie algebra $\g = \fla(\OTC)$ be equipped with a product $\tr\colon \g\times\g\rightarrow \g$, extended from the left grafting defined on $\OTC$ in (\ref{eq:graft}) as
\begin{eqnarray}
u\tr [v,w] & = & [u\tr v,w] + [v,u\tr w]\label{eq:tr1}\\
\left[u,v\right]\tr w & = &  a(u,v,w)-a(v,u,w) \label{eq:tr2}
\end{eqnarray}
for all $u,v,w \in \g$. Then $\{\fla(\OTC),[\cdot,\cdot],\tr\}$ is post-Lie.
\end{proposition}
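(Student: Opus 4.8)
The plan is to show that the two post-Lie axioms~(\ref{eq:postlie1})--(\ref{eq:postlie2}) hold for the product $\tr$ on all of $\g=\fla(\OTC)$, given that $\tr$ is already \emph{defined} on $\g$ by the recursive prescription~(\ref{eq:graft}) on $\OTC$ together with the extension rules~(\ref{eq:tr1})--(\ref{eq:tr2}). The first issue that must be dealt with is \textbf{well-definedness}: rules~(\ref{eq:tr1}) and~(\ref{eq:tr2}) purport to extend $\tr$ from the generating set $\OTC$ to all Lie polynomials, and one has to check this extension is consistent. Extending in the second argument via~(\ref{eq:tr1}) is the assertion that each operator $u\tr(-)$ is a derivation of the Lie bracket; since $\g$ is a \emph{free} Lie algebra, specifying a derivation is the same as specifying its values on generators, so $u\tr(-)$ is unambiguously determined by $u\tr c$ for $c\in\C$ — no consistency check is needed there. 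Extending in the first argument via~(\ref{eq:tr2}) requires checking that the right-hand side $a(u,v,w)-a(v,u,w)$ is alternating and satisfies the Jacobi-type compatibility needed for a Lie-algebra action; equivalently, one verifies that $(-)\tr w$ respects the defining relations of the free Lie algebra, i.e.\ is well defined as a map out of $\g$. I would phrase this as: the map $w\mapsto \big(u\mapsto u\tr w\big)$ extends to a linear map $\g\to\End(\g)$ compatible with brackets.

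Once $\tr$ is a well-defined bilinear product on $\g\times\g$, axiom~(\ref{eq:postlie1}) holds \emph{by construction}: it is literally~(\ref{eq:tr1}). So the real content is axiom~(\ref{eq:postlie2}), namely $[u,v]\tr w = a_\tr(u,v,w)-a_\tr(v,u,w)$ for all $u,v,w\in\g$. By construction~(\ref{eq:tr2}), this identity holds when $w$ ranges over the generators $c\in\C$ (indeed the associator in~(\ref{eq:tr2}), written $a$, is $a_\tr$). The strategy is then a \textbf{double induction}. First fix $w\in\C$ and argue by induction on the structure (length/degree) of $u$ and $v$ as Lie polynomials that~(\ref{eq:postlie2}) propagates; but actually for $w\in\C$ it is immediate from~(\ref{eq:tr2}), so the substantive induction is on $w$: assuming~(\ref{eq:postlie2}) holds for $w$ and $w'$, deduce it for $[w,w']$ and (if one works with the magma/PBW picture) for products. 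For the bracket step one expands both sides using~(\ref{eq:tr1}) (derivation property in the second slot) and the inductive hypothesis, and checks the Jacobi identity of $[\cdot,\cdot]$ collapses the difference to zero — this is the computation that Proposition~\ref{prop:jlbracket}'s proof alludes to as "verified by a direct computation."

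The cleanest route, and the one I would actually write, is to \emph{reduce to the enveloping algebra}. The recursion~(\ref{eq:graft}) is exactly the formula for the action of $\g$ on $\OTC\subset U(\g)$ by left grafting where $U(\g)$ is the free associative (magma/concatenation) algebra on $\OTC$; equivalently $\kOTC$ with Butcher product $\star$ and the grafting $\tr$ is a D-algebra in the sense of~\cite{munthe-kaas2008oth}, and~(\ref{eq:graft}) is the statement that elements of $\g$ act as $\star$-derivations. In that setting axioms~(\ref{eq:postlie1})--(\ref{eq:postlie2}) are the standard identities for the commutator/derivation structure of a D-algebra, already recorded in~\cite[Lemma~3]{munthe-kaas2008oth}, and Proposition~\ref{prop:homogen}'s proof already invokes exactly that lemma. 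So I would: (i) observe $\kOTC$ with $\star$ is the free associative algebra on $\OTC$ and $\tr$ extends to make it a D-algebra, citing~\cite{munthe-kaas2008oth}; (ii) invoke that the primitive elements / derivations of a D-algebra form a post-Lie algebra under the commutator bracket and the induced $\tr$; (iii) identify that sub-post-Lie algebra with $\fla(\OTC)$ since $\g$ is generated by $\OTC$ and closed under the commutator. The \textbf{main obstacle} is really step~(i)/(ii): one must be careful that the \emph{planar/noncommutative} grafting~(\ref{eq:graft}) genuinely satisfies the D-algebra axioms (associativity of $\star$ is automatic for a free magma algebra, but the interaction $\tr$/$\star$ encoded by the "Leibniz over $\star$" part of~(\ref{eq:graft}) must be checked to be consistent — in particular that $u\tr(-)$ is a well-defined $\star$-derivation, which again uses freeness), and that passing to derivations really lands one back in $\fla(\OTC)$ rather than something larger. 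If one prefers a self-contained argument avoiding the D-algebra machinery, the fallback is the explicit induction on $w$ sketched above, with the base case~(\ref{eq:tr2}) and the bracket-step powered by the Jacobi identity; that is routine but lengthy, which is presumably why the authors will defer to~\cite{munthe-kaas2008oth}.
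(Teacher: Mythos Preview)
Your direct approach---check well-definedness of the extension, then observe that the post-Lie axioms~(\ref{eq:postlie1})--(\ref{eq:postlie2}) are literally the extension rules~(\ref{eq:tr1})--(\ref{eq:tr2})---is exactly what the paper does, only the paper compresses it to one sentence: ``Since any $u,v,w\in\g$ can be written as a sum of trees and commutators of trees, the connection is well-defined on $\g$. It satisfies the axioms of a post-Lie algebra by construction.'' Your worry about having to \emph{prove}~(\ref{eq:postlie2}) for general $w$ by induction is largely unnecessary once you accept that~(\ref{eq:tr2}) is being imposed for \emph{all} $u,v,w\in\g$, not merely for $w\in\C$; the genuine work is entirely in well-definedness (checking that~(\ref{eq:tr1}) and~(\ref{eq:tr2}) are mutually compatible and respect antisymmetry and Jacobi in the first slot), which you identify correctly and which the paper simply asserts.

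Your ``cleanest route'' through D-algebras, however, contains a real error. You write that ``$\kOTC$ with Butcher product $\star$ and the grafting $\tr$ is a D-algebra'' and that ``associativity of $\star$ is automatic for a free magma algebra.'' Both claims are false: $\star$ is the \emph{magmatic} Butcher product on trees and is emphatically non-associative (a free magma has no relations at all), and $\kOTC$ has no unit. The D-algebra in question is not $\kOTC$ but $\k\{\OFC\}=\k\langle\OTC\rangle$, the free \emph{associative} algebra on the set $\OTC$, with concatenation of forests as the associative product and grafting extended to forests via~(\ref{eq:ex0})--(\ref{eq:ex2}); see Section~\ref{sec:Dalg}. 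The recursion~(\ref{eq:graft}) is phrased in terms of $\star$ on trees, not concatenation on forests, so it does not directly exhibit a D-algebra structure in the way you suggest. Your D-algebra strategy is salvageable---pass to $\k\{\OFC\}$, verify it is a D-algebra as in~\cite{munthe-kaas2008oth}, and identify $\Der$ with $\fla(\OTC)$---but as written the argument is circular relative to the paper, since the paper \emph{derives} the D-algebra structure on $U(\fFCT(\C))$ from the present proposition rather than the other way round.
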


\begin{proof}Since any $u,v,w\in \g$ can be written as a sum of trees and commutators of trees, the connection is well-defined on $\g$. It satisfies the axioms of a post-Lie algebra by construction.
\end{proof}

%

\paragraph{Free post-Lie algebras.}Proposition~\ref{prop:fct} shows that the free Lie algebra of ordered trees has naturally the structure of a post-Lie algebra  $\fFCT(\C) := \{\fla(\OTC),[\cdot,\cdot],\tr\}$. We call this the  \emph{free post-Lie algebra} over the set $\C$ for the following reason:

\begin{theorem}\label{th:freeFCT} For any post-Lie algebra $\{\A,[\cdot,\cdot],\tr\}$ and any
function $f\colon \C\rightarrow \A$, there exists a unique morphism of post-Lie algebras $\F\colon \fFCT(\C)\rightarrow \A$ such that $\F(c) = f(c)$ for all $c\in \C$.
\end{theorem}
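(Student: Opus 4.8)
The plan is to construct $\F$ in stages, mirroring the way $\fFCT(\C)$ is itself built up: first on colors, then on the free magma $\OTC$ by recursion on the left-grafting rule, then on the free Lie algebra $\fla(\OTC)$ by the universal property of free Lie algebras, and finally to check that the resulting map respects both post-Lie operations. Uniqueness will be almost automatic, since any post-Lie morphism is forced on the generators $\C$ and then forced on everything that can be produced from them by $[\cdot,\cdot]$ and $\tr$, and by Proposition~\ref{prop:fct} those generate all of $\fFCT(\C)$.

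First I would define $\F$ on $\OTC$ by recursion on the maximal right factorization. Set $\F(c) := f(c)$ for $c \in \C$. For a tree with factorization $\tau = \tau_1 \star (\tau_2 \star (\cdots (\tau_k \star c)))$, the Butcher product $\star$ is not one of the two structure operations of a post-Lie algebra, so I cannot map $\star$ directly; instead I exploit the recursion~(\ref{eq:graft}): reading that identity with $\tau_1 \star (\text{rest})$ on the right shows that $\tau_1 \star (\tau_2 \star (\cdots(\tau_k \star c)))$ equals $\tau_1 \tr (\tau_2\star(\cdots(\tau_k\star c)))$ minus a sum of trees of strictly smaller "size" in a suitable sense (fewer total grafts), so $\star$ is expressible via $\tr$ and trees built from lower data. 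Concretely I would induct on the number of nodes: $\F(\tau_1 \star \tau') := \F(\tau_1) \tr \F(\tau') - (\text{the correction terms of } (\ref{eq:graft}))$, each correction term involving $\F$ applied only to trees with fewer nodes, hence already defined. This unambiguously defines $\F$ on all of $\OTC$, and then $\k$-linearly on $\kOTC$.

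Next, since $\fla(\OTC)$ is the free Lie algebra on the set $\OTC$, the set map $\F\colon \OTC \to \A$ (followed by the inclusion of $\A$ as a Lie algebra under $\llbracket\cdot,\cdot\rrbracket$ — careful here, see below) extends uniquely to a Lie algebra morphism $\F\colon \fla(\OTC) \to \{\A,[\cdot,\cdot]\}$ with respect to the \emph{torsion} bracket $[\cdot,\cdot]$ on $\A$. This handles compatibility with $[\cdot,\cdot]$ by construction. It remains to verify that $\F$ also intertwines the connections, i.e.\ $\F(u \tr v) = \F(u) \tr \F(v)$ for all $u,v \in \fla(\OTC)$. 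I would prove this by a double induction. For $v$ a single tree and $u$ a tree this is exactly how $\F$ was defined via~(\ref{eq:graft}) — one checks that applying $\F$ to the right-hand side of~(\ref{eq:graft}) and using the axioms~(\ref{eq:postlie1}),~(\ref{eq:postlie2}) in $\A$ reproduces $\F(\tau)\tr\F(\tau')$. To pass from trees to Lie elements one uses that both sides are determined by their behaviour on brackets: the identity $u \tr [v,w] = [u\tr v, w] + [v, u\tr w]$ (axiom~(\ref{eq:postlie1}), which holds in both $\fFCT(\C)$ by~(\ref{eq:tr1}) and in $\A$ by definition of post-Lie) lets one reduce the second argument to trees, and $[u,v]\tr w = a(u,v,w) - a(v,u,w)$ (axiom~(\ref{eq:postlie2}), i.e.~(\ref{eq:tr2})) together with the already-established multiplicativity of $\F$ for $\tr$ in lower degrees lets one reduce the first argument to trees.

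The main obstacle — and the point that needs the most care — is the compatibility of the two brackets: the free Lie algebra functor must be applied with respect to the \emph{right} Lie structure on the target. In a post-Lie algebra $\A$ there are two Lie brackets floating around, the torsion $[\cdot,\cdot]$ and the derived bracket $\llbracket\cdot,\cdot\rrbracket$ of Proposition~\ref{prop:jlbracket}; a post-Lie morphism is required to respect $[\cdot,\cdot]$ (equivalently, by~(\ref{eq:jlb}), it then automatically respects $\llbracket\cdot,\cdot\rrbracket$ once it also respects $\tr$). One must be scrupulous that $\fla(\OTC)$ in the definition of $\fFCT(\C)$ is the free Lie algebra with $[\cdot,\cdot]$ as \emph{the} bracket, so the universal property of free Lie algebras is being invoked for $[\cdot,\cdot]$ on both sides; getting this matching right, and then checking that the $\tr$-multiplicativity verified on trees propagates through brackets using precisely~(\ref{eq:tr1})--(\ref{eq:tr2}) without circularity in the induction, is the delicate part. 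Everything else is a routine unwinding of the recursion~(\ref{eq:graft}) and the post-Lie axioms.
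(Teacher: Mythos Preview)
Your proposal is correct and follows essentially the same route as the paper: express $\star$ in terms of $\tr$ via~(\ref{eq:graft}), use this to define $\F$ on $\OTC$, extend to a Lie-algebra morphism on $\fla(\OTC)$ by universality, and then verify $\tr$-compatibility by inducting through brackets via~(\ref{eq:tr1})--(\ref{eq:tr2}). The only adjustment needed is the induction parameter in the first step: use the \emph{fertility} of the right $\star$-factor (as the paper does) rather than the number of nodes, since the correction terms in~(\ref{eq:graft}) preserve the total node count but strictly decrease the fertility of the right factor.
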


\begin{proof} We construct $\F$ in two stages. First we show, using $\tr$, that $f$ extends uniquely to a function $\F_{\OTC}\colon \OTC \rightarrow\A$. Then by universality of the free Lie algebra, there is a unique Lie algebra homomorphism $\F\colon\fla(\OTC)\rightarrow\A$. We show that this is also a homomorphism for the connection product $\tr$.

To construct the extension to $\OTC$ we first observe that the magmatic product  $\tau\star\tau'$ on $\OTC$ (the Butcher product of two trees) can be expressed in terms of left grafting $\tr$. This is done by induction on the fertility of $\tau'$. For fertility 0, i.e.\ $\tau'=c\in \C$, we have $\tau\star c = \tau\tr c$.
For fertility $k$ we write $\tau' =  \tau_1\star(\tau_2\star(\cdots(\tau_k\star c)))$ and find from~(\ref{eq:graft})
\[\tau\star \tau' = \tau\tr\tau' - (\tau\tr\tau_1)\star(\tau_2\star(\cdots(\tau_k\star c)))
-\cdots -  (\tau_1\star(\tau_2\star(\cdots(\tau\tr\tau_k\star c))) .\]
In the right hand side of the equation, the fertility of any term to the right of a $\star$-product is smaller than $k$, which completes the induction. The fact that $\OTC$ is freely generated from $\C$ by the product $\star$ ensures that $\F_{\OTC}$ is uniquely defined by
\begin{eqnarray*}\F_{\OTC}(c) = f(c)\quad \mbox{for all $c\in \C$}\\
\F_{\OTC}(\tau\tr\tau') = \F_{\OTC}(\tau)\tr\F_{\OTC}(\tau'),
\end{eqnarray*}
and hence that $\F\colon\fla(\OTC)\rightarrow\A$ is uniquely defined as a Lie algebra homomorphism.

Finally, by induction on the length of iterated commutators, we see that $\F(u\tr v) = \F(u)\tr\F(v)$ for all $u,v\in \fla(\OTC)$: If $u,v\in \OTC$ this holds by construction. Assuming that $\F(u\tr v) = \F(u)\tr\F(v)$ whenever $u$ and $v$ are iterated commutators of length at most $k$, we find by using~(\ref{eq:tr1})--(\ref{eq:tr2}) that $\F([u,\tau_1]\tr [v,\tau_2]) = \F([u,\tau_1])\tr \F([v,\tau_2])$ for all $\tau_1,\tau_2\in \OTC$.
\end{proof}

\begin{proposition}Let $\fFCT(\C)$ be graded by the number $n$ counting the number of nodes in the trees. Then 
\[\mbox{dim}(\fFCT(\C)_n) = \frac{1}{2n}\sum_{d|n}\mu(\frac{n}{d})\binom{2d}{d} n^{|\C|},\]
where $\mu$ is the M{\"o}bius function. For $|\C|=1$ the dimensions are $1, 1, 3, 8, 25, 75, 245,\ldots$, see \cite{oeisA022553}.
\end{proposition}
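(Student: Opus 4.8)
The plan is to identify $\fFCT(\C)$ with the free Lie algebra over $\OTC$ and then count the dimension of its degree-$n$ component, where the grading records the total number of nodes in the trees appearing. The key observation is that a basis for $\OTC$ in node-degree $n$ is given by the planar rooted trees with $n$ nodes, each colored by one of the $|\C|$ colors, and these sit as free generators of the free Lie algebra $\fla(\OTC)$. So the first step is to determine $\dim (\kOTC)_n$, the number of $\C$-colored planar rooted trees with $n$ nodes. The number of planar (ordered) rooted trees with $n$ nodes is the Catalan number $C_{n-1} = \frac{1}{n}\binom{2n-2}{n-1}$; coloring each of the $n$ nodes independently multiplies this by $|\C|^n$, so the generating set in degree $n$ has size $\frac{1}{n}\binom{2n-2}{n-1}|\C|^n$. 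I would record this as the sequence of ranks of the free generators.

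Next I would invoke the standard formula for the dimension of the homogeneous components of a free Lie algebra on a graded set of generators. If $V = \bigoplus_{n\ge 1} V_n$ is a graded vector space with $\dim V_n = a_n$, then the free Lie algebra $\fla(V)$ has $\dim \fla(V)_n$ computed by the (generalized) Witt / necklace formula. Here, because all generators of $\OTC$ in degree $n$ genuinely have degree $n$ (a tree with $n$ nodes contributes degree $n$, and a Lie bracket adds degrees), the relevant specialization is: the Hilbert series of $\fla(\OTC)$ is governed by $\prod_{n\ge 1}(1-t^n)^{-\dim \fla(\OTC)_n} = \frac{1}{1 - \sum_{m} a_m t^m}$ when the generating function of the $a_m$ has the right closed form. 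In fact the generating function $\sum_{m\ge 1} a_m t^m = \sum_{m\ge 1}\frac{1}{m}\binom{2m-2}{m-1}|\C|^m t^m$ — one should check that this telescopes against the Catalan generating function so that $1 - \sum_m a_m t^m$ has a clean form — and then Witt's formula
\[
\dim \fla(\OTC)_n = \frac{1}{n}\sum_{d\mid n}\mu\!\left(\frac{n}{d}\right) c_d,
\]
where $c_d$ is the coefficient of $t^d$ in the formal power series $\bigl(1 - \sum_m a_m t^m\bigr)^{-1}$ minus $1$ (the "number of words" count), gives the result. The arithmetic that makes the stated answer appear — getting $c_d = \binom{2d}{d} \cdot (\text{something})$ and the final $n^{|\C|}$ factor — is the part I would carry out by the generating-function manipulation; in particular one expects $1 - \sum_{m\ge 1}\frac{1}{m}\binom{2m-2}{m-1}t^m = \sqrt{1-4t}\,\big/\,(\text{correction})$ type identity, since $\sum \frac{1}{m}\binom{2m-2}{m-1}t^m$ is essentially $\frac{1-\sqrt{1-4t}}{2}$ up to reindexing.

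Concretely I would proceed: (1) establish $\dim(\kOTC)_n = \frac{1}{n}\binom{2n-2}{n-1}|\C|^n = $ (colored Catalan), using the magma/planar-tree description of $\OTC$; (2) recall that as a graded vector space, $\fla(\OTC)$ is the free Lie algebra on this generating space, hence its Hilbert series satisfies $\prod_{n}(1-t^n)^{\dim\fFCT(\C)_n} = 1 - H(t)$ where $H(t) = \sum_n \dim(\kOTC)_n t^n$; (3) apply Möbius inversion (Witt's formula) to extract $\dim\fFCT(\C)_n = \frac{1}{n}\sum_{d\mid n}\mu(n/d)\,w_d$ where $w_d$ is the coefficient of $t^d$ in $\frac{H(t)}{1-H(t)}$-type series; (4) compute these coefficients in closed form, obtaining $w_d = \binom{2d}{d}d^{|\C|-1}$ or the equivalent, and simplify to match the claimed $\frac{1}{2n}\sum_{d\mid n}\mu(n/d)\binom{2d}{d}n^{|\C|}$; (5) check the first few values $1,1,3,8,25,\ldots$ against OEIS A022553 as a sanity check. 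The main obstacle is step (4): verifying that the generating-function bookkeeping — reconciling the Catalan generating function $\frac{1-\sqrt{1-4t}}{2}$ with the central binomial coefficients $\binom{2d}{d}$ and producing the factor $n^{|\C|}$ rather than $|\C|^n$ — actually yields exactly the stated formula. The appearance of $n^{|\C|}$ (as opposed to $|\C|^n$, which is what naive coloring would suggest) signals that the intended grading or the intended statement bundles the color count into an exponent via a substitution; I would need to track carefully whether the paper grades only by node count with colors contributing multiplicatively (giving $|\C|^n$) or whether an additional reindexing produces $n^{|\C|}$, and I expect that sorting out this discrepancy — quite possibly flagging it as a typo for $|\C|^n$, since $n^{|\C|}$ does not reduce to the $|\C|=1$ sequence in an obvious way beyond coincidentally giving $n^1$ — is where the real care is required.
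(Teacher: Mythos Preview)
The paper does not actually prove this proposition; it simply cites two earlier papers. So there is nothing to compare against at the level of argument, and your outline is in fact the standard (and correct) route: identify $\fFCT(\C)=\fla(\OTC)$, count the graded generators, and apply the Witt/necklace formula via the Catalan generating function.

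Your suspicion about the formula is well-founded and worth stating plainly: as printed, the factor $n^{|\C|}$ is a typo. For $|\C|=1$ it would give an extra factor of $n$, and already at $n=2$ one would get $\frac{1}{4}(6-2)\cdot 2 = 2$ instead of $1$. The correct expression is
\[
\dim\fFCT(\C)_n \;=\; \frac{1}{2n}\sum_{d\mid n}\mu\!\left(\frac{n}{d}\right)\binom{2d}{d}\,|\C|^{d},
\]
with $|\C|^d$ \emph{inside} the sum. This is exactly what your steps (1)--(4) produce once the generating-function computation is carried out: with $P(t)=\sum_{m\ge1}C_{m-1}|\C|^{m}t^{m}=\tfrac{1}{2}\bigl(1-\sqrt{1-4|\C|t}\bigr)$ one finds
\[
[t^d]\,\log\bigl(1-P(t)\bigr)\;=\;-\frac{|\C|^{d}}{2d}\binom{2d}{d},
\]
and M\"obius inversion of $\prod_n(1-t^n)^{\ell_n}=1-P(t)$ then gives $\ell_n=\frac{1}{2n}\sum_{d\mid n}\mu(n/d)\binom{2d}{d}|\C|^d$. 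So your ``main obstacle'' in step~(4) dissolves: the central binomial coefficient $\binom{2d}{d}$ arises directly from differentiating $\log\frac{1+\sqrt{1-4t}}{2}$, not from any further manipulation of the Catalan numbers, and the color contribution is indeed $|\C|^d$ rather than $n^{|\C|}$. Your instinct to flag this was right; the rest of your plan is sound and complete.
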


\begin{proof}See~\cite{munthe-kaas1999cia} and~\cite{munthe-kaas2003oep}.
\end{proof}

\begin{remark}The same dimensions also appear for the primitive Lie algebra of the Hopf algebra CQSym (Catalan Quasi-Symmetric functions)~\cite{novelli2007parking}. 
\end{remark}

\subsection{Universal enveloping algebras}\label{sec:Dalg}
\paragraph{D-algebras.}
In Section~\ref{sec:LBseries} we will describe certain algebraic structures that occur naturally in the study of numerical integration methods on manifolds \cite{munthe-kaas2008oth}. Central to this work are algebras of derivations, called $D$-algebras. We will see that post-Lie algebras relate to D-algebras similarly to how Lie algebras relate to their universal enveloping algebras.

\begin{definition}[D-algebra \cite{munthe-kaas2008oth}]\label{Dalg}
Let $B$ be a unital associative algebra with product $u,v \mapsto uv$, unit $\one$ and equipped with a non-associative product $\cdot\tr\cdot \colon B \otimes B \rightarrow B$ such that $\one\tr v = v$ for all $v\in B$. Write $\Der(B)$ for the set of all $u \in B$ such that $u\tr \cdot$ is a derivation:
$$\Der(B) = \{u\in B \,\,|\,\, u\tr(vw) = (u\tr v)w + v(u\tr w) \hspace{0.2cm} \text{for all } v,w \in B\}.$$  $B$ is called a \emph{D-algebra} if the product $u,v\mapsto uv$ generates $B$ from $\{\one,\Der(B)\}$ and, furthermore, for any $u \in\Der(B)$ and any $v,w\in B$ we have
\begin{eqnarray}
v\tr u &\in& \Der(B)\\
(uv)\tr w & =&    u\tr(v\tr w) - (u\tr v)\tr w.\label{eq:assocB}
\end{eqnarray}
\end{definition}

\begin{proposition}
 If $B$ is a $D$-algebra then the derivations $\Der(B)$ form a post-Lie algebra, with torsion $[u,v] = uv-vu$ and connection $\tr$.
 \end{proposition}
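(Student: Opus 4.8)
The plan is to verify the two post-Lie axioms~(\ref{eq:postlie1})--(\ref{eq:postlie2}) directly for the subspace $\Der(B)$ of a $D$-algebra $B$, using the commutator bracket $[u,v] = uv - vu$ and the restriction of $\tr$. The first thing to check is that $\Der(B)$ is closed under both operations: closure under the commutator is immediate, since a sum and difference of compositions of derivations is again a derivation (this is the standard fact that derivations form a Lie subalgebra of $\End(B)$ transported to $B$ via the action $u\tr\cdot$, combined with the hypothesis that $uv\in B$ makes sense and that $(uv)\tr w = u\tr(v\tr w) - (u\tr v)\tr w$); closure under $\tr$, i.e.\ $u\tr v\in\Der(B)$ whenever $u,v\in\Der(B)$, is not automatic but follows from the axiom $v\tr u\in\Der(B)$ in Definition~\ref{Dalg} (with the roles of the first two slots read correctly), so $\{\Der(B),[\cdot,\cdot],\tr\}$ is at least a well-defined candidate structure.

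Next I would verify~(\ref{eq:postlie1}), $u\tr[v,w] = [u\tr v,w] + [v,u\tr w]$ for $u,v,w\in\Der(B)$. The key observation is that for $u\in\Der(B)$ the map $u\tr\cdot$ is a derivation of the associative product, hence $u\tr(vw) = (u\tr v)w + v(u\tr w)$; applying this to $vw$ and to $wv$ and subtracting gives exactly $u\tr(vw - wv) = (u\tr v)w + v(u\tr w) - (u\tr w)v - w(u\tr v)$, which is $[u\tr v,w] + [v,u\tr w]$. So~(\ref{eq:postlie1}) is essentially a restatement of the derivation property and requires no real work.

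The substantive step is~(\ref{eq:postlie2}), $[u,v]\tr w = a_\tr(u,v,w) - a_\tr(v,u,w)$, where $a_\tr(u,v,w) = u\tr(v\tr w) - (u\tr v)\tr w$. Here I would use the defining relation~(\ref{eq:assocB}): $(uv)\tr w = u\tr(v\tr w) - (u\tr v)\tr w = a_\tr(u,v,w)$. Hence $(uv)\tr w - (vu)\tr w = a_\tr(u,v,w) - a_\tr(v,u,w)$, and since $(uv)\tr w - (vu)\tr w = (uv - vu)\tr w = [u,v]\tr w$ by bilinearity of $\tr$ in its first argument (which follows because $\tr$ is $\k$-bilinear as part of the $D$-algebra data), we get~(\ref{eq:postlie2}) immediately. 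The only point needing a little care is that~(\ref{eq:assocB}) is stated for $u\in\Der(B)$ and arbitrary $v,w\in B$, so both $uv$ and $vu$ lie in $B$ and the relation applies to each; since $u,v\in\Der(B)$ here, this is fine. I expect no genuine obstacle: the proposition is really the observation that the $D$-algebra axioms were reverse-engineered precisely so that $\Der(B)$ carries a post-Lie structure, and the proof amounts to unwinding~(\ref{eq:assocB}) and the derivation property; the only thing one must be slightly attentive to is keeping track of which slots of $\tr$ the closure axiom $v\tr u\in\Der(B)$ refers to, to conclude that $\tr$ restricts to a map $\Der(B)\times\Der(B)\to\Der(B)$.
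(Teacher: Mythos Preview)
Your proposal is correct and follows the same route as the paper, with the emphasis inverted: the paper's proof is devoted almost entirely to showing $[u,v]=uv-vu\in\Der(B)$, dismissing the post-Lie axioms as ``easy'', whereas you spell out~(\ref{eq:postlie1}) and~(\ref{eq:postlie2}) and treat closure under the commutator as ``immediate''. It is not quite immediate---the map $u\mapsto u\tr\cdot$ is not an algebra homomorphism $B\to\End(B)$, so one cannot simply invoke the standard fact about derivations of an associative algebra. What the paper does is exactly the computation you carry out for~(\ref{eq:postlie2}): from~(\ref{eq:assocB}) one gets
\[
(uv-vu)\tr\cdot \;=\; u\tr(v\tr\cdot)-v\tr(u\tr\cdot)\;-\;(u\tr v)\tr\cdot\;+\;(v\tr u)\tr\cdot,
\]
and then observes that the first two terms together form an operator commutator of derivations (hence a derivation), while each of the last two is a derivation because $u\tr v,\,v\tr u\in\Der(B)$ by the $D$-algebra axiom. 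You have all these ingredients in hand, so there is no genuine gap; just be aware that this closure step is where the content lies, and your parenthetical gloss undersells it.
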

 
 \begin{proof}If $u,v\in \Der(B)$ we note that
 \[(uv-vu)\tr\cdot = u\tr(v\tr\cdot)-  v\tr(u\tr\cdot) +(u\tr v)\tr\cdot - (v\tr u)\tr\cdot .\]
 The first two terms on the right is a commutator of two derivations and is therefore a derivation. The last two terms are derivations separately. Hence, $[u,v]\in \Der(B)$ and $\{\Der(B),[\cdot,\cdot]\}$ is a Lie algebra. The other axioms of being post-Lie follows easily from the definition of a
 D-algebra.
 \end{proof}

\paragraph{Universal enveloping algebras.} Let $\{A,[\cdot,\cdot],\tr\}$ be a post-Lie algebra, and let $\{U(A),\ass\}$ be the universal enveloping algebra of the Lie algebra $\{A, [\cdot, \cdot]\}$, where $\ass$ denotes the unital associative product $u,v\mapsto uv$ in $U(A)$. By the Poincar\'{e}--Birkhoff--Witt
 theorem we can embed $A$ as a linear subspace of $U(A)$, 
such that $[u,v]=uv-vu$. The embedding of $A$ is also denoted by $A$. The product $\tr$ on $A$ can be extended to $U(A)$:
\begin{eqnarray}
\label{eq:ex0} \one\tr v & = & v\\
\label{eq:ex1} u \tr (vw)& =& (u \tr v)w + v(u\tr w)\\
\label{eq:ex2} (uv) \tr w & =&  u\tr (v\tr w) - (u\tr v)\tr w,
\end{eqnarray}
for all $u\in A$ and $v,w\in U(A)$.
\begin{proposition} Equations~(\ref{eq:ex0})--(\ref{eq:ex2}) define a unique extension of $\tr$ from $A$ to $U(A)$. With the non-associative product $\tr$, $\{U(A),\ass,\tr\}$ is a $D$-algebra, with $A\subset\Der(U(A))$.
\end{proposition}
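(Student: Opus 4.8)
The plan is to prove this in two stages: first establish that equations~(\ref{eq:ex0})--(\ref{eq:ex2}) determine $\tr$ on all of $U(A)$ and that the recursion is consistent, then verify the D-algebra axioms.

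For existence and uniqueness of the extension, I would argue as follows. By PBW, $U(A)$ is spanned by ordered monomials $u_1 u_2\cdots u_n$ with $u_i\in A$. Equation~(\ref{eq:ex2}) lets one strip off the leftmost factor: $(u_1(u_2\cdots u_n))\tr w = u_1\tr((u_2\cdots u_n)\tr w) - (u_1\tr(u_2\cdots u_n))\tr w$, which recursively reduces the left slot to a single element of $A$, while~(\ref{eq:ex1}) together with~(\ref{eq:ex0}) reduces the right slot: $u\tr(v_1\cdots v_m)$ is determined by the $u\tr v_i$ (all in $A$, where $\tr$ is already given) via the Leibniz rule, and $u\tr\one = 0$ (a consequence of~(\ref{eq:ex1}) with $v=w=\one$). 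So the formulas force a unique value. The subtlety is \emph{well-definedness}: a priori the left slot of $u\tr v$ is not literally in $A$ but in $U(A)$, so one must check that the recursion respects the defining relations of $U(A)$ --- i.e. that $(uv - vu - [u,v])\tr w = 0$ and the associativity relations among the $u_i$ --- and that the two reductions (stripping left factors versus expanding right factors) agree. The cleanest route is to build the extension concretely: define $L\colon U(A)\to \operatorname{End}(U(A))$ on generators by declaring $L(u)$ for $u\in A$ to be the unique derivation of the associative algebra $U(A)$ extending $v\mapsto u\tr v$ on $A$ (this exists and is unique because $U(A)$ is freely generated as an associative algebra modulo the PBW relations, and one checks $u\tr[v,w] = [u\tr v,w]+[v,u\tr w]$ from~(\ref{eq:postlie1}), so the map $A\to\operatorname{Der}(U(A))$ is well-defined), then extend $L$ to a linear map on $U(A)$ by the formula $L(uv) = L(u)L(v) - L(u\tr v)$ dictated by~(\ref{eq:ex2}), and verify by induction on word length that this is consistent with the associative relations in $U(A)$. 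Setting $u\tr v := L(u)(v)$ then gives the extension; (\ref{eq:ex0})--(\ref{eq:ex2}) hold by construction.

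Once the extension exists and is unique, the D-algebra axioms are mostly bookkeeping. The associative product $uv$ generates $U(A)$ from $\{\one, A\}$ by PBW, and $A\subseteq\Der(U(A))$ because~(\ref{eq:ex1}) says precisely that $u\tr\cdot$ is a derivation for $u\in A$. Conversely one should check $\Der(U(A)) $ is exactly what is needed --- actually the definition only requires $A\subseteq\Der(U(A))$, generation, and the two closure conditions $v\tr u\in\Der(U(A))$ for $u\in A$, $v\in U(A)$, together with~(\ref{eq:assocB}) for $u\in\Der(U(A))$. For the latter, since $uv$ generates from $\Der$, and~(\ref{eq:ex2}) is stated for $u\in A$, I would bootstrap: (\ref{eq:ex2}) extends from $u\in A$ to all $u\in U(A)$ by induction using the same stripping argument, hence holds for $u\in\Der(U(A))$ in particular. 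For $v\tr u\in\Der(U(A))$ when $u\in A$: one computes $(v\tr u)\tr(wz)$ using~(\ref{eq:ex2}) to move $v\tr u$ --- write $(v\tr u)\tr \cdot$, and if $v\in A$ this follows from post-Lie relation~(\ref{eq:postlie1})/(\ref{eq:tr1}) transported into $U(A)$, namely $v\tr u\in A\subseteq\Der$; for general $v\in U(A)$ one inducts on the length of $v$, using~(\ref{eq:ex2}) to reduce $v\tr u$ and the fact that a commutator of derivations is a derivation and $u\tr(\cdot)$ of a derivation-valued thing stays in $\Der$.

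The main obstacle I expect is the well-definedness of the extension in the first stage --- verifying that the recursive formulas~(\ref{eq:ex1})--(\ref{eq:ex2}) are compatible with the PBW/ideal relations of $U(A)$, so that $u\tr v$ does not depend on how an element of $U(A)$ is written as a word. This is exactly the kind of "coherence" check that is genuinely the content of a universal-property-style theorem, and it is where the post-Lie axioms~(\ref{eq:postlie1}) and~(\ref{eq:postlie2}) get used in an essential way (the former to get $A\to\operatorname{Der}(U(A))$ well-defined, the latter to match the $(uv)\tr w$ relation with the Lie bracket $[u,v]=uv-vu$). Everything after that --- the generation statement, $A\subseteq\Der$, and the two remaining D-algebra axioms --- is a routine induction on word length once the extension is in hand. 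I would also remark that this result together with the preceding proposition (derivations of a D-algebra form a post-Lie algebra) is what will eventually give the functorial adjunction between post-Lie algebras and D-algebras, though that is presumably pursued later in the section.
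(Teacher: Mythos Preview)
Your proposal is correct and follows essentially the same route as the paper's proof: extend on the right by invoking that each $u\tr\cdot$ for $u\in A$ is a derivation of the Lie algebra (using~(\ref{eq:postlie1})) and hence extends uniquely to a derivation of $U(A)$, then extend on the left via~(\ref{eq:ex2}) and check compatibility with $[u,v]=uv-vu$ using the flatness condition~(\ref{eq:postlie2}), with PBW giving uniqueness. The paper is considerably terser --- it cites Jacobson for the derivation-extension step and does not spell out the remaining D-algebra axioms --- whereas you work through the well-definedness and the closure conditions by induction on word length; but the underlying argument and the placement of the two post-Lie axioms are the same.
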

\begin{proof}See~\cite[Theorem V.1]{jacobson1979la} for a proof that a derivation on a Lie algebra $A$ extends uniquely to a derivation on $U(A)$. This justifies the extension on the right~(\ref{eq:ex1}). The extension on the left, given by (\ref{eq:ex0}) and (\ref{eq:ex2}), is compatible with the the embedding $[u,v]\mapsto uv-vu$ due to the flatness condition (\ref{eq:postlie2}) for post-Lie algebras. From the PBW basis on $U(A)$ it follows that these equations extend $\tr$ uniquely to all of $U(A)$ also on the left. From~(\ref{eq:ex1}) we see that $A\subset \Der(U(A))$. 
\end{proof}

\begin{definition}[Universal enveloping algebras]
We call $\{U(A),\ass,\tr\}$ the \emph{universal enveloping algebra} of the \emph{post-Lie} algebra $A$. 
\end{definition}

A D-algebra morphism is a linear map between D-algebras $\F\colon B'\rightarrow B$ such that $\F(\one) = \one$, $\F(uv) = \F(u)\F(v)$ and $\F(u\tr v) = \F(u)\tr\F(v)$.
Obviously $\F$ restricts to a post-Lie morphism $\Der(\F)\colon \Der(B')\rightarrow \Der(B)$ by $\Der(\F)([u,v]) = \F(uv-vu)$ and $\Der(\F)(u\tr v) = \F(u\tr v)$.
The following result, which is very similar to the corresponding result for Lie algebras, justifies naming $\{U(A),\ass,\tr\}$ the universal enveloping algebra of the post-Lie algebra $A$.

\begin{proposition}\label{prop:dalgUni}Let $A$ be post-Lie and $\iota\colon A\hookrightarrow \Der(U(A))$ the inclusion. For any D-algebra $B$ and any post-Lie morphism $f\colon A\rightarrow \Der(B)$ there exists a unique D-algebra morphism $\F\colon U(A)\rightarrow B$ such that $\Der(\F)\opr \iota  =  f$. 
\end{proposition}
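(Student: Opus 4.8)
The plan is to mimic the classical proof of the universal property of enveloping algebras of Lie algebras, using the functoriality of $U(-)$ as a Lie-enveloping functor together with the extension formulas (\ref{eq:ex0})--(\ref{eq:ex2}). First I would observe that a post-Lie morphism $f\colon A\rightarrow\Der(B)$ is, in particular, a Lie algebra morphism from $\{A,[\cdot,\cdot]\}$ into the Lie algebra $\{\Der(B),[\cdot,\cdot]\}$ (where the bracket on $\Der(B)$ is $uv-vu$, which lands in $\Der(B)$ by the first D-algebra axiom). Composing with the inclusion $\Der(B)\hookrightarrow B$ gives a Lie morphism $A\rightarrow B_{\mathrm{Lie}}$, so by the universal property of the ordinary enveloping algebra $U(A)$ there is a unique unital associative algebra morphism $\F\colon U(A)\rightarrow B$ extending $f$ on $A$. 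This pins down $\F$ uniquely as a map of associative algebras, and since $U(A)$ is generated as an associative algebra by $A\subset\Der(U(A))$ (and $\one$), and a D-algebra morphism must respect $\ass$, this $\F$ is the only possible candidate; it remains only to check that it is a D-algebra morphism, i.e.\ that $\F(u\tr v)=\F(u)\tr\F(v)$ for all $u,v\in U(A)$.

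Next I would prove $\F(u\tr v)=\F(u)\tr\F(v)$ in stages, exploiting that $\tr$ on $U(A)$ is built from $\tr$ on $A$ by the three recursions. Step one: for $u\in A$ and $v\in A$, the identity $\F(u\tr v)=\F(u)\tr\F(v)$ holds because $f$ is assumed to be a post-Lie morphism (it respects $\tr$). Step two: for fixed $u\in A$ and general $v,w\in U(A)$, I would show that the set of $v\in U(A)$ for which $\F(u\tr v)=\F(u)\tr\F(v)$ is a subalgebra of $\{U(A),\ass\}$: if it holds for $v$ and for $w$, then using (\ref{eq:ex1}) on the left and the D-algebra derivation axiom on the right (valid since $\F(u)\in\Der(B)$, because $u\in A$ and $f(A)\subset\Der(B)$) one gets it for $vw$; since it holds on the generators $A$ and trivially on $\one$ by (\ref{eq:ex0}), it holds on all of $U(A)$. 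So $\F(u\tr v)=\F(u)\tr\F(v)$ for all $u\in A$, $v\in U(A)$. Step three: extend in the first argument. I would show the set of $u\in U(A)$ satisfying $\F(u\tr v)=\F(u)\tr\F(v)$ for all $v$ is again a subalgebra: it contains $A$ by step two, contains $\one$ by (\ref{eq:ex0}), and is closed under $\ass$ using (\ref{eq:ex2}) on the $U(A)$ side and the D-algebra axiom (\ref{eq:assocB}) on the $B$ side — here one needs that $\F(u)\in\Der(B)$ when $u$ ranges over the generators, which is why the two-variable recursion (\ref{eq:ex2}) only needs to be checked with the left factor a derivation, exactly matching (\ref{eq:assocB}). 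Since $U(A)$ is generated by $A$ and $\one$, this gives the identity for all $u,v\in U(A)$, so $\F$ is a D-algebra morphism, and finally $\Der(\F)\opr\iota=f$ holds by construction on $A$.

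The main obstacle, and the point deserving the most care, is the bookkeeping in step three: one must be sure that when closing the set $\{u : \F(u\tr v)=\F(u)\tr\F(v)\ \forall v\}$ under associative multiplication, every application of (\ref{eq:ex2}) on the $U(A)$-side is matched by a legitimate application of (\ref{eq:assocB}) on the $B$-side, which requires the inner left factors to be derivations — this works because we build up from $A=\Der$-generators, but it means the induction must be organized as "$u$ a product of elements of $A$" rather than an unstructured induction, and one should note the subtlety that in $(uv)\tr w$ the element $u$ must be a single generator in $A$ (a derivation) for (\ref{eq:ex2}) to apply, so the associativity/normal-form argument has to peel off generators from the left one at a time. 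A secondary point worth a sentence is uniqueness: it follows immediately from the associative-algebra part, since any D-algebra morphism restricting to $f$ on $A$ must agree with the classical enveloping-algebra extension, which is unique.
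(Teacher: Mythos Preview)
Your proposal is correct and follows essentially the same approach as the paper: define $\F$ via the classical universal property of $U(A)$ as a Lie enveloping algebra, then verify $\F(u\tr v)=\F(u)\tr\F(v)$ by induction using (\ref{eq:ex0})--(\ref{eq:ex2}) on the $U(A)$ side and the D-algebra axioms on the $B$ side. The paper's proof is a one-line sketch (``induction on the PBW grading''), and your three-step organization --- first $u,v\in A$, then extend in $v$, then extend in $u$ peeling off generators from the left --- is exactly the detailed execution of that sketch, including the care you take that (\ref{eq:ex2}) and (\ref{eq:assocB}) only apply when the left factor is a derivation.
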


\begin{proof}$\F$ is uniquely defined as a unital associative algebra morphism. It remains to verify that $\F(u\tr v) = \F(u)\tr \F(v)$. $U(A)$ can be graded by the length of the monomial basis of PBW. Using~(\ref{eq:ex0})--(\ref{eq:ex2}), it follows by induction on the grading that $\F(u\tr v) = \F(u)\tr \F(v)$.
\end{proof}

\begin{remark}
The preceding results establish a pair of adjoint functors between the categories of D-algebras and post-Lie algebras:
\begin{diagram}
U(\cdot):  \text{post-Lie}: &\pile{\rTo \\ \lTo}&\text{D-alg} :\Der(\cdot).
\end{diagram}
In other words, there is a natural isomorphism
\[\Hom_{\operatorname{postLie}}(\Der(A), B) \rightarrow \Hom_{\operatorname{D}}(A, U(B)).\]
%
\end{remark}

\paragraph{Free D-algebras.} A direct consequence of Theorem~\ref{th:freeFCT} and Proposition~\ref{prop:dalgUni} is the following characterization of a free D-algebra:
\begin{corollary}[{\cite[Proposition 1]{munthe-kaas2008oth}}]\label{univDalg}
The algebra $\D_{\C} := U(\fFCT(\C))$ is the free D-algebra over the set $\C$. That is, for any D-algebra $B$ and any function $f\colon \C\rightarrow \Der(B)$ there exists a unique D-algebra morphism $\F\colon \D_{\C}\rightarrow B$ such that $\F(c) = f(c)$ for all $c\in\C$.
\end{corollary}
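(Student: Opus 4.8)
The plan is to obtain Corollary \ref{univDalg} as a formal consequence of the two functorial facts already established: Theorem \ref{th:freeFCT}, which says $\fFCT(\C)$ is the free post-Lie algebra over $\C$, and Proposition \ref{prop:dalgUni}, which gives the adjunction $U(\cdot) \dashv \Der(\cdot)$ between post-Lie algebras and D-algebras. Composing a free object with a left adjoint yields a free object, so $\D_\C = U(\fFCT(\C))$ should be the free D-algebra over $\C$; the only real work is to check that the universal property unwinds correctly, since ``free D-algebra over the set $\C$'' must be interpreted as free with respect to maps $\C \to \Der(B)$, not maps into all of $B$.

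First I would fix a D-algebra $B$ and an arbitrary function $f\colon \C \to \Der(B)$. Since $\Der(B)$ is a post-Lie algebra (by the proposition preceding the definition of universal enveloping algebras), Theorem \ref{th:freeFCT} applies with $\A = \Der(B)$: there is a unique post-Lie morphism $\widetilde{f}\colon \fFCT(\C) \to \Der(B)$ with $\widetilde{f}(c) = f(c)$ for all $c \in \C$. Next, apply Proposition \ref{prop:dalgUni} to the post-Lie morphism $\widetilde{f}$: there is a unique D-algebra morphism $\F\colon U(\fFCT(\C)) \to B$ such that $\Der(\F)\opr\iota = \widetilde{f}$, where $\iota\colon \fFCT(\C)\hookrightarrow\Der(U(\fFCT(\C)))$ is the canonical inclusion. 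In particular, for each $c \in \C$ the element $c \in \C$ sits inside $\fFCT(\C)$, hence inside $\D_\C$, and $\F(c) = \Der(\F)(\iota(c)) = \widetilde{f}(c) = f(c)$. This establishes existence of the required D-algebra morphism.

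For uniqueness, suppose $\F'\colon \D_\C \to B$ is any D-algebra morphism with $\F'(c) = f(c)$ for all $c \in \C$. Restricting $\F'$ to derivations gives a post-Lie morphism $\Der(\F')\colon \Der(\D_\C) \to \Der(B)$; precomposing with $\iota$ gives a post-Lie morphism $\fFCT(\C) \to \Der(B)$ that sends each $c \in \C$ to $f(c)$. By the uniqueness clause of Theorem \ref{th:freeFCT}, this equals $\widetilde f$, i.e. $\Der(\F')\opr\iota = \widetilde f$. But then the uniqueness clause of Proposition \ref{prop:dalgUni} forces $\F' = \F$. This completes the argument.

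The step I expect to require the most care is the bookkeeping in the uniqueness argument, specifically the claim that a D-algebra morphism is determined by its values on $\C$ once we know it agrees with $\widetilde f$ on derivations: one must use that, by Corollary's hypothesis and the definition of a D-algebra, $\D_\C = U(\fFCT(\C))$ is generated as an associative algebra by $\{\one\}\cup\Der(\D_\C)$, and that $\fFCT(\C)$ — being the free Lie algebra on $\OTC$ equipped with $\tr$, with $\OTC$ itself generated from $\C$ under the magmatic product, which by the proof of Theorem \ref{th:freeFCT} is expressible via $\tr$ — is generated inside $\Der(\D_\C)$ from $\C$ using the bracket and $\tr$. Thus agreement on $\C$ propagates to all of $\Der(\D_\C)$ via the post-Lie structure, and then to all of $\D_\C$ via the associative product; a D-algebra morphism respects all of this, so uniqueness is forced. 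No genuinely new computation is needed beyond invoking the two cited results.
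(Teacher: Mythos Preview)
Your proposal is correct and follows exactly the route the paper indicates: the paper simply states that the corollary is ``a direct consequence of Theorem~\ref{th:freeFCT} and Proposition~\ref{prop:dalgUni}'' without spelling out the details. Your argument faithfully unpacks this, composing the freeness of $\fFCT(\C)$ with the universal property of $U(\cdot)$ to obtain existence and uniqueness of $\F$; the extra paragraph on generation of $\D_\C$ from $\C$ is more care than the paper takes, but is in the same spirit.
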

The unital associative algebra of $\D_{\C}$ is  $U(\fla(\OTC))$, 
which by the Cartier--Milner--Moore theorem is the free associative algebra over $\OTC$. I.e.\ it is the noncommutative polynomials over rooted trees: $\D_{\C} = \k \langle \OT_{\C} \rangle= \k\{\OFC\}$, where 
$\k\{\OFC\}$ denotes the free vector space over the set of \emph{ordered forests}.
$\OFC := \OTC^*$ consist of all words of finite length over the alphabet $\OTC$, including the empty word $\one$. For $\C=\{\ab\}$, these are
\[\OF = \left\{\one, \,\, \ab, \,\, \ab\,\ab, \,\, \aabb, \,\, \ab\,\ab\,\ab, \,\, \aabb\,\ab, \,\, \ab\,\aabb, \,\, \aababb, \,\, \aaabbb, \,\, \cdots\right\}.\]   
We can create a tree from a forest $\omega$ by applying the operator $\Bplus_c: \OFC \rightarrow \OTC$, attaching the trees in $\omega$ onto a common root labelled by $c\in \C$, and we can create a forest from a tree using the operator $\Bminus: \OTC \rightarrow \OFC$ removing the root. The concatenation product $\omega_1,\omega_2\mapsto \omega_1\omega_2$ is the associative operation of sticking shorter words together to create longer words. 

To summarize, the free D-algebra $\D_\C$ is the vector space of forests $\k\{F_\C\}$ with unit $\one$, concatenation product and the left grafting product $\tr$ defined on trees in~(\ref {eq:graft}) and extended to forests by~(\ref{eq:ex0})--(\ref{eq:ex2}).
This free D-algebra carries a Hopf algebra structure, closely related to the Butcher-Connes--Kreimer Hopf algebra, to be discussed in the sequel.




\paragraph{The composition product $\opr$ on D-algebras.}
A \emph{dipterous} algebra \cite{loday2010cha} is a triple $\{B,\opr,\tr\}$, where $B$ is a vector space and $\opr$ and $\tr$ are two binary operations on $B$ satisfying:
  \begin{eqnarray}
  x\opr(y\opr z) & = & (x\opr y)\opr z
 \label{eq:dipt1}\\
 x\tr(y\tr z) &= &(x\opr y)\tr z\label{eq:dipt2}
 \end{eqnarray}
for all $x,y,z\in B$. Let $B$ be a D-algebra with concatenation $x,y\mapsto xy$ and connection product $x\tr y$. Define a product $\opr\colon B\times B\rightarrow B$ as 
\begin{equation}\label{eq:composition} 
\begin{split}
\one\opr y &= y\\
x\opr y &:=  \,\, xy + x\tr y\\
(xy)\opr z  &:=\,\, x\opr (y\opr z) - (x\tr y)\opr z\quad \mbox{for all $x\in \Der(B), y,z\in B$.}  
\end{split}
\end{equation}

\begin{proposition}If $B$ is a D-algebra then $\{B,\opr,\tr\}$ is a dipterous algebra.
\end{proposition}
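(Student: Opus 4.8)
The plan is to verify the two dipterous axioms~(\ref{eq:dipt1}) and~(\ref{eq:dipt2}) directly from the defining recursion~(\ref{eq:composition}), using the D-algebra axioms~(\ref{eq:ex1})--(\ref{eq:ex2}) (equivalently, the axioms in Definition~\ref{Dalg}). Since $B$ is generated as an associative algebra by $\{\one,\Der(B)\}$, it suffices to prove both identities when the relevant arguments lie in $\Der(B)$ or equal $\one$, and then bootstrap to arbitrary elements of $B$ by induction on word length in the concatenation product, exactly as the recursion~(\ref{eq:composition}) itself is set up.

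First I would record the base cases: when $x=\one$, both axioms are immediate from $\one\opr y = y$ and $\one\tr y = y$. Then, for~(\ref{eq:dipt2}), I would first treat $x\in\Der(B)$: here $x\opr y = xy + x\tr y$, so $(x\opr y)\tr z = (xy)\tr z + (x\tr y)\tr z$, and by~(\ref{eq:assocB}) (i.e.~(\ref{eq:ex2})) the term $(xy)\tr z$ equals $x\tr(y\tr z) - (x\tr y)\tr z$; the two $(x\tr y)\tr z$ terms cancel, leaving $x\tr(y\tr z)$, which is~(\ref{eq:dipt2}). To extend~(\ref{eq:dipt2}) to $x = x_1 x_2$ with $x_1\in\Der(B)$ and $x_2$ shorter, I would compute $(x_1 x_2)\tr(y\tr z)$ two ways — once via~(\ref{eq:ex2}) applied to the pair $(x_1,x_2)$ and once by writing $x_1 x_2 = x_1 \opr x_2 - x_1\tr x_2$ from the second line of~(\ref{eq:composition}) — and match them against $((x_1 x_2)\opr y)\tr z$ expanded via the third line of~(\ref{eq:composition}); the inductive hypothesis on $x_2$ (which lies in $\Der(B)$ or is a shorter product) closes the step.

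For~(\ref{eq:dipt1}), associativity of $\opr$, the cleanest route is induction on the length of $x$. For $x\in\Der(B)$ I would expand both sides using $x\opr w = xw + x\tr w$ and the third line of~(\ref{eq:composition}), reducing everything to the already-known associativity of concatenation, the Leibniz/derivation identity~(\ref{eq:ex1}), and~(\ref{eq:dipt2}); concretely, $x\opr(y\opr z) = x(y\opr z) + x\tr(y\opr z)$ and $(x\opr y)\opr z = (xy + x\tr y)\opr z = (xy)\opr z + (x\tr y)\opr z$, where $(xy)\opr z$ is rewritten by the third defining line of~(\ref{eq:composition}) precisely so as to cancel the discrepancy. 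The inductive step $x = x_1 x_2$ then follows by applying the third line of~(\ref{eq:composition}) to peel off $x_1\in\Der(B)$ and invoking the hypothesis on $x_2$.

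The main obstacle I anticipate is purely bookkeeping: the recursion~(\ref{eq:composition}) defines $\opr$ by a "remainder" formula $(xy)\opr z = x\opr(y\opr z) - (x\tr y)\opr z$, so one must be careful that the induction is genuinely well-founded — each reduction must strictly decrease the length of the left argument — and that the several cancellations of $(x\tr y)\tr z$-type and $(x\tr y)\opr z$-type terms are matched correctly. I would also double-check that $\opr$ is well-defined at all (independence of the chosen factorization $x = x_1 x_2$), though this is implicitly guaranteed because $U(A)$, hence any D-algebra built the same way, has the PBW/free-associative structure used earlier; in a self-contained write-up one would note that~(\ref{eq:composition}) matches the known recursion for the composition product in~\cite{munthe-kaas2008oth}, where well-definedness is established. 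No genuinely new idea is needed beyond the axioms already in hand.
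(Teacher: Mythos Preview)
Your proposal is correct and follows essentially the same approach as the paper, which gives only the one-line hint ``Proof by induction on the grading on $B$ provided by the PBW basis.'' You have in fact supplied the details that the paper omits: the base case $x\in\Der(B)$ for~(\ref{eq:dipt2}) via~(\ref{eq:assocB}), the observation that~(\ref{eq:dipt1}) for $x\in\Der(B)$ is immediate from the third line of~(\ref{eq:composition}), and the inductive peeling-off of a leftmost derivation factor. One small point worth making explicit in a clean write-up: the induction on the length of the left argument is well-founded because for $x_1\in\Der(B)$ the element $x_1\tr x_2$ has PBW filtration degree at most that of $x_2$, strictly less than that of $x_1x_2$; this is what justifies applying the inductive hypothesis to $x_1\tr x_2$ as well as to $x_2$. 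Your remark about ``computing $(x_1x_2)\tr(y\tr z)$ two ways'' is slightly muddled and unnecessary---a single expansion via~(\ref{eq:ex2}) already suffices---but this does not affect correctness.
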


\begin{proof}Proof by induction on the grading on $B$ provided by the PBW basis.
\end{proof}
\noindent The product $x,y\mapsto x\opr y$ will be referred to as the \emph{composition product}, while $x,y\mapsto xy$ is called either concatenation or \emph{frozen composition}, due to its interpretation for differential operators on manifolds. Let $A=\Omega^0(\M,\g)$ be the post-Lie algebra defined in Proposition~\ref{prop:homogen}, and let $B=U(A)=\Omega^0(\M,U(\g))$. For $f,g\in B$ the frozen composition is $(fg)(p) = f(p)g(p)$, where we `freeze' the value of $f$ and $g$ at a point $p\in\M$ and obtain the product from $U(\g)$. The composition $f,g\mapsto f\opr g$, on the other hand, corresponds to the fundamental operation of composing two differential operators on $\M$. For $f,g\in \Der(B)$ we have
$f\opr g = fg + f\tr g$, splitting up the composition into a term $fg$, where $g$ is `frozen' (constant), and a term $f\tr g$ where the variation of $g$ along $f$ is taken into account.

In the free D-algebra $D_\C$, the composition of two forests $\omega_1,\omega_2\in \OFC$ is computed as~(\cite{munthe-kaas2008oth} Definition 2):
\begin{equation}\label{eq:grossmanprod}
\omega_1\opr\omega_2 = B^-(\omega_1\tr B^+(\omega_2)).
\end{equation}
The color of the added root $B^+$ does not matter, since the root is subsequently removed by $B^-$.
We call this the planar Grossman--Larson product, since it is a planar forest analogue of the Grossman--Larson product~\cite{grossman89has} of unordered trees appearing in the Connes--Kreimer Hopf algebra.

\subsection{Hopf algebras}
Hopf algebraic structures related to the free D-algebra $\D_\C= U(\fFCT(\C))$ have been studied in~\cite{ lundervold2011bea,lundervold2009hao,munthe-kaas2008oth}. These Hopf algebras can both be seen as generalizations of the shuffle--concatenation Hopf algebras of free Lie algebras as well as of the Connes--Kreimer Hopf algebra, which is closely related to pre-Lie algebras \cite{chapoton2001pla}.

\paragraph{Shuffle product.} From the classical theory of free Lie algebras, it follows that the derivations $\Der(\D_\C)$ can be characterized in terms of shuffle products. Define the shuffle product $\sh: \D_{\C}\tpr\D_{\C} \rightarrow \D_{\C}$ on the free D-algebra $\D_{\C}$ by $\one \sh \omega = \omega = \omega \sh \one$ and $$(\tau_1\omega_1) \sh (\tau_2 \omega_2) = \tau_1(\omega_1 \sh \tau_2 \omega_2) + \tau_2(\tau_1\omega_1 \sh \omega_2)$$ for $\tau_1, \tau_2 \in \OT$, $\omega_1, \omega_2 \in \OF$.
Let $(\cdot,\cdot)$ be an inner product on $\D_\C$ defined such that the forests form an orthonormal basis, and let the coproduct $\cpds\colon \D_{\C}\rightarrow  \D_{\C}\tpr\D_{\C}$ be the adjoint of $\sh$.

\begin{proposition}\label{prop:hnstar} The free D-algebra $\D_\C$ has the structure of a cocommutative Hopf algebra $\Hn'=\{\k\{\OFC\},\epsilon,\opr,\eta,\cpds,S\}$, whose product is the planar Grossman--Larson product $\opr$ defined in~(\ref{eq:grossmanprod}), coproduct $\cpds$ is the adjoint of the shuffle, and unit $\eta$ and counit $\epsilon$ are given as 
\begin{eqnarray*}
\eta(1)&=& \one\\
\epsilon(\one)&=& 1,\quad
\epsilon(\omega) = 0\quad\mbox{ for all $\omega\in \OFC\backslash\{\one\}$.}
\end{eqnarray*}
 The primitive elements are
$\mbox{Prim}(\Hn') = \Der(\D_\C)$.
The antipode $S$ is defined in~\cite{munthe-kaas2008oth}.
\end{proposition}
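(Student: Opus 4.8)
The plan is to check that $\Hn'$ is a connected graded cocommutative bialgebra, to deduce the antipode from connectedness, and then to read off the primitives from the coalgebra alone; conceptually this recovers the Hopf algebra of~\cite{munthe-kaas2008oth} and exhibits it as the post-Lie analogue of the Oudom--Guin construction, with $(\D_\C,\opr)$ in the role of an enveloping algebra of $\fFCT(\C)$ equipped with the Lie bracket $\llbracket\cdot,\cdot\rrbracket$ of Proposition~\ref{prop:jlbracket}, which is exactly the $\opr$-commutator on $\Der(\D_\C)$. First I would grade $\D_\C=\k\{\OFC\}$ by the number of nodes, so that $(\D_\C)_0=\k\one$ and $\ass$ (concatenation), $\tr$, $\opr$ and $\cpds$ are all degree-preserving. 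The coalgebra side is classical: since $\sh$ is associative, commutative and unital (unit $\one$), its adjoint $\cpds$ is a coassociative, cocommutative, counital coproduct with counit $\epsilon$; in fact $(\D_\C,\ass,\cpds)$ is the standard cocommutative Hopf algebra on the tensor algebra $\k\langle\OTC\rangle$ over the alphabet of planar trees, so $\cpds$ is multiplicative for concatenation and $\Prim(\D_\C,\cpds)=\fla(\OTC)$. On the algebra side, associativity of $\opr$ is the dipterous relation~(\ref{eq:dipt1}); $\one\opr y=y$ is part of~(\ref{eq:composition}), and $y\opr\one=y$ follows from $y\tr\one=0$ (apply~(\ref{eq:ex1}) to $\one\cdot\one$), so $(\D_\C,\opr,\eta)$ is unital associative. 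Compatibility of $\eta$ and $\epsilon$ with both structures is immediate from the grading.

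The substantial step is bialgebra compatibility: that $\cpds$ is a morphism of algebras $(\D_\C,\opr)\to(\D_\C\otimes\D_\C,\opr\otimes\opr)$. I would isolate it in the lemma that, for each single tree $\tau\in\OTC$, the left-grafting operator $L_\tau:=\tau\tr(-)$ is a \emph{coderivation} of $(\D_\C,\cpds)$, i.e.\ $\cpds\circ L_\tau=(L_\tau\otimes\Id+\Id\otimes L_\tau)\circ\cpds$. Expanding $\tau\opr y=\tau y+\tau\tr y$, using $\cpds(\tau)=\tau\otimes\one+\one\otimes\tau$, multiplicativity of $\cpds$ for concatenation, and the definition of $\opr$ on the tensor square, one sees that this lemma is precisely equivalent to $\cpds(\tau\opr y)=\cpds(\tau)\opr\cpds(y)$ for all $y$. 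The lemma itself I would prove by induction on the number of trees in the forest argument: the base case $\cpds(\tau\tr c)=\cpds(\tau\star c)$ holds because $\tau\star c$ is a single tree, hence primitive, together with $L_\tau\one=0$; in the inductive step one writes the argument as $\tau'\omega$ with $\tau'$ a tree, invokes $\tau\tr(\tau'\omega)=(\tau\tr\tau')\omega+\tau'(\tau\tr\omega)$ from~(\ref{eq:ex1}), uses that $\tau\tr\tau'$ is a sum of (primitive) trees, and applies the hypothesis to $\omega$. Granting the lemma, the set $\mathcal G:=\{x\in\D_\C:\cpds(x\opr y)=\cpds(x)\opr\cpds(y)\ \text{for all }y\}$ contains $\one$ and every tree of $\OTC$, and it is closed under $\opr$ by associativity of $\opr$ and of $\opr\otimes\opr$. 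Finally $(\D_\C,\opr)$ is generated by $\one$ and $\OTC$: for a forest of $k\ge2$ trees one has $\tau_1\cdots\tau_k=\tau_1\opr(\tau_2\cdots\tau_k)-\tau_1\tr(\tau_2\cdots\tau_k)$, and $\tau_1\tr(\tau_2\cdots\tau_k)$ is, by~(\ref{eq:ex1}), a sum of forests of $k-1$ trees, so induction on $k$ finishes. Hence $\mathcal G=\D_\C$, $\cpds$ is $\opr$-multiplicative, and $\Hn'$ is a connected graded cocommutative bialgebra.

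The remaining two points are then quick. A connected graded bialgebra has a unique antipode, given by $S(\one)=\one$ and, in positive degree, $S(x)=-x-\sum S(x_{(1)})\opr x_{(2)}$ on the reduced coproduct; hence $\Hn'$ is a Hopf algebra, with the explicit formula for $S$ on forests as in~\cite{munthe-kaas2008oth}. And the primitive elements of a bialgebra depend only on the coproduct, so $\Prim(\Hn')=\Prim(\D_\C,\cpds)=\fla(\OTC)=\fFCT(\C)=\Der(\D_\C)$, the last two equalities being the identification of the free post-Lie algebra with the derivations of its enveloping $D$-algebra recorded above. I expect the only real obstacle to be the coderivation lemma together with the bookkeeping that turns it into bialgebra compatibility; everything else is either classical shuffle / free-associative-algebra theory or immediate from~(\ref{eq:composition})--(\ref{eq:ex2}) and the grading. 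The bialgebra content largely reproduces~\cite{munthe-kaas2008oth}; what is new here is the identification of $\Prim(\Hn')$ with the derivations $\Der(\D_\C)$, equivalently with the free post-Lie algebra $\fFCT(\C)$.
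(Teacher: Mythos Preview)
Your argument is correct and considerably more detailed than the paper's. The paper does not give a direct proof at all: it simply cites \cite{munthe-kaas2008oth} for the Hopf algebra structure (established there for the dual $\Hn$) and \cite{reutenauer93fla} for the identification of the primitives with $\fla(\OTC)$. You instead give a self-contained verification, isolating the essential point as the coderivation lemma for $L_\tau=\tau\tr(-)$ with respect to the deshuffle coproduct, and then bootstrapping to full bialgebra compatibility via the $\opr$-generating set $\{\one\}\cup\OTC$. This is precisely the Oudom--Guin style argument transported to the post-Lie setting, and it makes transparent \emph{why} $\opr$ is compatible with $\cpds$: trees are primitive, and grafting by a primitive is a coderivation because it is a derivation for concatenation and sends trees to (sums of) trees.

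One small expository point: your stated base case ``$\cpds(\tau\tr c)=\cpds(\tau\star c)$ for $c\in\C$'' is too narrow for an induction on the number of trees in the forest argument. The correct base case is any single tree $\tau'\in\OTC$; there $\tau\tr\tau'$ is a $\k$-linear combination of single trees and hence primitive, so the coderivation identity holds for the same reason you give. The remaining steps (closure of $\mathcal G$ under $\opr$, generation of $\D_\C$ by $\one$ and $\OTC$ under $\opr$ via $\tau_1\cdots\tau_k=\tau_1\opr(\tau_2\cdots\tau_k)-\tau_1\tr(\tau_2\cdots\tau_k)$, antipode from connected grading, primitives from the tensor-algebra coalgebra) are all sound.
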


\begin{proof} The Hopf algebraic structure (for the dual of $\Hn'$) is proven in~\cite{munthe-kaas2008oth}. The characterization of the primitive elements follows from the free Lie algebra structure~\cite{reutenauer93fla}.
\end{proof}

\paragraph{The Hopf algebra $\Hn$, a magmatic view}  In the study of numerical integration on manifolds, it is important to characterize flows and parallel transport on manifolds with connections algebraically. It is convenient to base this on the dual Hopf algebra of $\Hn'$. Let $\Hn=\{\k\{\OFC\},\epsilon,\sh,\eta,\cpg,S\}$ be the commutative Hopf algebra of planar forests, where the product is the shuffle product $\sh$ and the coproduct $\cpg$ the adjoint of the planar Grossman--Larson product. Various expressions for $\cpg$ and the antipode $S$ are derived in~\cite{munthe-kaas2008oth}.
Our definition of $\OFC$ and $\Hn$ is rather involved, going via trees and enveloping algebras, extending $\tr$ from derivations, introducing
the dipterous composition $\opr$ and dualizing to obtain $\cpg$. However, both $\OFC$ and the Hopf algebra $\Hn$ can alternatively be defined in a compact, recursive manner. We will review this definition, which is the foundation for a computer implementation of $\Hn$ currently under construction.

\begin{definition}[Magmatic definition of $\OFC$]
Given a set $\C$ we let $\{\times_c\}_{c\in\C}$ be  a collection of magmatic products, i.e.\ a collection of products without any defining relations. Letting $\one$ denote the unity, we define $\OFC$ as the free magma generated from $\one$ by the magmatic products.
\end{definition}

This definition can be related to our previous definition of $\OFC$ by interpreting $\omega_1\times_c \omega_2$ in terms of forests as
\begin{equation}\omega_1\times_c\omega_2 = \omega_1 B^+_c (\omega_2)\end{equation}
for all $\omega_1,\omega_2\in \OFC$, $c\in \C$.
E.g., for a white colored node $c=\AB$, we have $\one\times_c \one = \AB$ and
\[\ab\times_c \aabb\ab = \ab \AaabbabB.\]
Any $\omega\in \OFC\backslash\{\one\}$ can be written uniquely as $\omega = \omega_L\times_c\omega_R$, where $c\in \C$ is the root of the rightmost tree in the forest. We call $\omega_L$ and $\omega_R$ the left and right parts of $\omega$ and $c$ the right root.

\begin{definition}[Shuffle product] The shuffle product $\sh\colon\k\{\OFC\}\tpr \k\{\OFC\}\rightarrow\k\{\OFC\}$ is defined by $\k$-linearity and the recursion
\begin{equation}\label{eq:shuffle} 
\begin{split}
\one\sh \omega &= \omega\sh \one = \omega, \quad \mbox{for all $\omega\in \OFC$,}\\
v\sh\omega &= (v_L\sh\omega)\times_c v_R+ (v\sh\omega_L)\times_d\omega_R, \quad\mbox{ for $v = v_L\times_c v_R$, 
$\omega = \omega_L\times_d \omega_R$.}
\end{split}
\end{equation}
\end{definition}

\begin{definition}[Coproduct] The coproduct $\cpg\colon\k\{\OFC\}\rightarrow\k\{\OFC\}\tpr \k\{\OFC\}$  is defined by $\k$-linearity and the recursion
\begin{equation}\label{eq:coprod} 
\begin{split}
\cpg(\one) &= \one\tpr\one \\
\cpg(\omega) &= \omega\tpr \one + \cpg(\omega_L){\sh\!\times_d}\cpg(\omega_R),\quad\mbox{for $\omega= \omega_L\times_d \omega_R$,}
\end{split}
\end{equation}
where $\sh\!\times_d$ is the shuffle product on the left and the magmatic product $\times_d$ on the right: \[(u_1\tpr u_2){\sh\!\times_d} (v_1\tpr v_2) := (u_1\sh v_1)\tpr(u_2\times_d v_2).\]
\end{definition}

\begin{proposition}[\cite{munthe-kaas2008oth}]$\Hn=\{\k\{\OFC\},\epsilon,\sh,\eta,\cpg,S\}$ is a commutative Hopf algebra.
\end{proposition}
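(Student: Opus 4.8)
The plan is to verify each Hopf algebra axiom for $\Hn = \{\k\{\OFC\}, \epsilon, \sh, \eta, \cpg, S\}$ by induction on the magmatic structure of $\OFC$, using that $\OFC$ is freely generated from $\one$ by the products $\{\times_c\}_{c \in \C}$. The key observation is that every axiom we must check is an identity between multilinear maps, so it suffices to verify it on generators of the relevant free magmas and then propagate it through the defining recursions~(\ref{eq:shuffle}) and~(\ref{eq:coprod}). Throughout, the induction variable will be the total number of nodes (or the number of magmatic products used to build the forest), so that whenever $\omega = \omega_L \times_c \omega_R$, both $\omega_L$ and $\omega_R$ are strictly smaller.

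First I would check that $\sh$ is associative and commutative. Commutativity is immediate from the symmetry of the recursion~(\ref{eq:shuffle}) once one observes $v \sh \omega$ and $\omega \sh v$ produce the same two-term expansion with the roles of $(v_L, v_R, c)$ and $(\omega_L, \omega_R, d)$ interchanged; associativity $(u \sh v) \sh w = u \sh (v \sh w)$ follows by expanding all three via~(\ref{eq:shuffle}), reducing to smaller instances and matching the six resulting terms. Next I would verify that $\cpg$ is coassociative and cocommutative: cocommutativity again follows from the symmetric form of~(\ref{eq:coprod}) combined with the already-established commutativity of $\sh$, while coassociativity $(\cpg \tpr \Id)\cpg = (\Id \tpr \cpg)\cpg$ is an induction using~(\ref{eq:coprod}) together with the compatibility of the $\sh\!\times_d$ operation with $\cpg$ applied componentwise. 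Then I would check that $\cpg$ and $\epsilon$ are algebra morphisms for $\sh$, i.e.\ $\cpg(u \sh v) = \cpg(u) \sh \cpg(v)$ (shuffle on each tensor leg) and $\epsilon(u \sh v) = \epsilon(u)\epsilon(v)$; the latter is trivial from the recursion since a nontrivial shuffle always has a nonempty leading magmatic product, and the former is the central bialgebra compatibility, proved by the same node-count induction after unwinding both $\cpg$ and $\sh$ via their recursions and using that $\sh\!\times_d$ intertwines correctly. The counit and unit axioms are immediate from the definitions of $\epsilon$, $\eta$ and the base cases of the recursions.

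Finally, for the antipode: since $\Hn$ is a graded connected bialgebra (graded by number of nodes, with the degree-zero part $\k\one$), the antipode exists automatically and is given by the usual recursive formula $S(\one) = \one$, $S(\omega) = -\omega - \sum S(\omega') \sh \omega''$ over the reduced coproduct; one can simply cite that a graded connected bialgebra is a Hopf algebra, or equivalently note that~\cite{munthe-kaas2008oth} already supplies explicit formulas for $S$. So the only genuine content is the bialgebra structure, and among those checks the main obstacle is the compatibility $\cpg(u \sh v) = \cpg(u) \sh \cpg(v)$: it requires carefully matching the terms produced by first shuffling then coproducting against those produced by coproducting each factor and then shuffling componentwise, and the bookkeeping of which magmatic product $\times_c$ versus $\times_d$ sits at the root in each branch of the recursion is delicate. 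Everything else reduces to routine, if somewhat lengthy, structural inductions — and indeed, since $\Hn$ is asserted in the excerpt to be the dual of the already-established Hopf algebra $\Hn'$ from Proposition~\ref{prop:hnstar}, an alternative and shorter route is simply to observe that dualizing a Hopf algebra yields a Hopf algebra and that the recursions~(\ref{eq:shuffle})--(\ref{eq:coprod}) are exactly the transposes of the planar Grossman--Larson product and the shuffle coproduct on $\Hn'$, so this proposition follows from Proposition~\ref{prop:hnstar} together with the finite-dimensionality of each graded piece.
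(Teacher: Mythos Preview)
The paper does not give a proof here; it simply cites~\cite{munthe-kaas2008oth}, where the Hopf structure is established on the dual side $\Hn'$. Your closing remark---that $\Hn$ is the graded dual of the Hopf algebra $\Hn'$ of Proposition~\ref{prop:hnstar}, and that the recursions~(\ref{eq:shuffle})--(\ref{eq:coprod}) are the transposes of the Grossman--Larson product and the deshuffle coproduct---is exactly the intended argument, and is a complete proof once you invoke finite-dimensionality of each graded piece. Your longer direct-verification sketch is also a legitimate route and would work, with one exception noted below.

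There is a genuine error in your sketch: $\cpg$ is \emph{not} cocommutative, and the recursion~(\ref{eq:coprod}) is not symmetric. The term $\omega\tpr\one$ has no companion $\one\tpr\omega$ at the top level, and the operation $\sh\!\times_d$ treats the two tensor legs asymmetrically (shuffle on the left, magmatic product on the right). Concretely, using~(\ref{eq:coprod}) one computes
\[
\cpg(\aababb) = \aababb\tpr\one + \ab\,\ab\tpr\ab + \ab\tpr\aabb + \one\tpr\aababb,
\]
which contains $\ab\tpr\aabb$ but not $\aabb\tpr\ab$. Equivalently, on the dual side $\opr$ is not commutative: $\ab\opr\aabb \neq \aabb\opr\ab$. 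The proposition only claims that $\Hn$ is \emph{commutative} (because $\sh$ is), not cocommutative, so this mistake does not invalidate the statement you are proving---but you should delete the cocommutativity claim and the appeal to a ``symmetric form'' of~(\ref{eq:coprod}), since there is none.
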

The Hopf algebra $\Hn$ is the natural setting for \emph{Lie--Butcher series}.

\section{Lie--Butcher series and moving frames}\label{sec:LBseries}
Lie--Butcher series are formal power series for flows and vector fields on manifolds, which combine Butcher series with Lie series. An extensive overview can be found in~\cite{lundervold2009hao}. We will briefly review the basic definitions and present some new relations between LB-series and moving frames. In particular, we will see that one form of LB-series can be interpreted as a Taylor-type series for the development of a curve on a manifold. This interpretation is closely related to  moving frames, and can provide new geometric insight into the design of numerical integration algorithms.

\subsection{Lie--Butcher series, basic definitions}
\begin{definition}[Lie--Butcher series] Let $\Hn^* = \operatorname{Hom}_{\k}(\Hn,\k)$ denote the linear dual space of $\Hn$.  An element $\alpha\in \Hn^*$ is called a Lie--Butcher series. We identify $\alpha$ with an infinite series
\[\alpha= \sum_{\omega\in \OFC}\alpha(\omega) \omega,\]
via a dual pairing $(\cdot,\cdot)\colon \Hn^*\times \Hn\rightarrow \k$ defined such that 
\[\alpha(\omega) = (\alpha,\omega)\quad\mbox{for all $\omega\in \OFC$.}\]
\end{definition}

Define  \emph{characters} $G(\Hn)\subset\Hn^*$ and \emph{infinitesimal characters} $\g(\Hn) \subset\Hn^*$ as
 \begin{eqnarray}
 G(\Hn) & =&\big\{\alpha\in \Hn^* \colon \alpha(\one)=1,\, \,\alpha(\omega_1\sh\omega_2) = \alpha(\omega_1)\alpha(\omega_2) \mbox{ for $\omega_1,\omega_2\in \OFC$}\big\}\\
 \g(\Hn) & = & \big\{\alpha\in \Hn^* \colon \alpha(\one)=0,\, \,\alpha(\omega_1\sh\omega_2) = 0 \mbox{ for $\omega_1,\omega_2\in \OFC\backslash\{\one\}$}\big\} .
 \end{eqnarray}
The convolution product on $\Hn^*$ is defined in the standard way for Hopf algebras:
 \begin{equation}\alpha\opr \beta (\omega) = \sh (\alpha\tpr \beta)\cpg(\omega).\end{equation}
For characters taking values in $k$, the shuffle product reduces to the scalar product in $k$. The convolution is the extension of the planar Grossman--Larson product from finite series to infinite series, obtained by considering $\Hn^*$ as the projective limit $\Hn^* =\underleftarrow{\lim} \, \N_k$, where $\N_k = \mbox{span}\{\omega\in \OF \colon |\omega|\leq k\}$. Note that the series are \emph{formal} power series. The question of convergence for concrete realisations is outside the scope of the algebraic theory. 

$G(\Hn)$ with the convolution product forms a group called the character group of $\Hn$, where the unit and the inverse is given by the unit and the antipode in the Hopf algebra $\Hn$, see~\cite{lundervold2009hao}. In the special case where the post-Lie algebra is  pre-Lie, this is the Butcher group, first introduced in~\cite{butcher1972aat} as a tool to study numerical integration. More generally, the elements in $\g(\Hn)$ can represent vector fields, and elements in $G(\Hn)$  diffeomorphisms  on a manifold $\M$. The convolution represents the composition of diffeomorphisms. Parallel transport of $g\in \Hn^*$ along the  $t=1$ flow of $f\in \g(\Hn)$ is represented by the exponential 
of the connection, which using~(\ref{eq:dipt2}) becomes
\[\exp(f\tr)g:=g+f\tr g + \frac12f\tr(f\tr g)+\cdots = (\one+f+\frac12 f\opr f+\cdots)\tr g = \exp^\opr f \tr g,\]
where $\exp^\opr$ is the exponential with respect to the composition product. The map $\exp^\opr \colon \g(\Hn)\rightarrow G(\Hn)$ is 1--1, with inverse given by $\log^\opr$, see~\cite{lundervold2009hao} for 
explicit formulas for the logarithm.

\subsection{Moving frames and Lie--Butcher series for developments}\label{sec:lbhom} We will in this final section briefly discuss  $LB$-series in numerical analysis of integration on a homogeneous space $\M$. 
By formulating the numerical flows in terms of the post-Lie algebra structure defined in Section~\ref{sec:leftist} and choosing gauge by a moving frame, we obtain a new geometric insight from the 
 LB-series. 

We recall the basic operations: Let $G\times_{\! H} \g\rightarrow \M$ be the vector bundle~(\ref{eq:atiyahbundle}) and $\Gamma(G\times_{\! H} \g)$ the space of sections. Let ${\rho_{\! M}}\colon G\times_{\! H}\g\rightarrow \M$ be the finite motion~(\ref{eq:rhomap}) and ${\rho_{\! M}}_*\colon G\times_{\! H}\g\rightarrow \TM$ its infinitesimal generator~(\ref{eq:rhostarmap}). As shown in Proposition~\ref{prop:atiyahpostlie}, the space of sections $\Gamma(G\times_{\! H} \g)$ forms a post-Lie algebroid $\{G\times_{\! H} \g, [\cdot,\cdot],\tr,{\rho_{\! M}}_*\}$, where $[\cdot,\cdot]$ and $\tr$ on $\Gamma(G\times_{\! H} \g)$ correspond
to $-[\cdot,\cdot]$ and $\vartr$ on $\Omega_H^0(G,\g)$. Let $\sigma\colon \M\rightarrow G$ be a Cartan gauge (left partial moving frame), with $\sigma_*\colon \TM\rightarrow G\times_{\! H}\g$  defined in~(\ref{eq:sigmastar}). Note that ${\rho_{\! M}}_*\opr \sigma_* = \mbox{Id}_{\TM}$. 

%
Suppose we want to integrate a differential equation on $\M$ \[y'(t) = F(y(t)), \quad y(0)=y_0\in \M,\] where $F\in \XM$. We represent the vector field
as $F = {\rho_{\! M}}_*\opr f$, where
\[f = \sigma_*\opr F\in \Gamma(G\times_{\! H} \g).\]
A numerical method with timestep $0<h\in \RR$ is a diffeomorphism $\Psi_{hf}\colon \M\rightarrow \M$, designed to approximate the exact flow $\Phi_{hf}$. Numerical methods are constructed by composing the basic operations  $\rho_M$, $[\cdot,\cdot]$ and parallel transport with $\tr$, which in many applications can all be computed fast by linear algebra on finite dimensional spaces. The fact that the torsion $[\cdot,\cdot]$ in a post-Lie algebroid is tensorial makes this bracket much easier to compute than  the Lie algebroid bracket $\llbracket\cdot,\cdot\rrbracket$, which requires Lie derivation. 

In this setting the simplest possible numerical method, corresponding to Eulers classical integration scheme, reads
\begin{equation}\Psi^{\mbox{euler}}_{hf} = \rho_M\opr hf.\end{equation}
Many other integrators are discussed in  the literature~\cite{iserles2000lgm}.
All numerical methods constructed by composing the basic operations above can be represented as LB-series, and the analysis of the LB-series of the
numerical solution is a fundamental tool used to answer many questions about accuracy and geometric properties of the numerical integrators. 

Consider  $\Hn^*$, where $\C = \{\ab\}$, and an identification $\ab\mapsto f\in\Gamma(G\times_{\! H} \g)$. This extends uniquely to a map
$\F_f\colon \Hn^*\rightarrow \Gamma(G\times_{\! H} U(\g))$ into the enveloping algebra. For a forest $\omega\in \OFC$, $\F_f(\omega)$ is called an \emph{elementary differential operator}. 
Note that
for $t\in \RR$ we have $\F_{tf}(\omega) = t^{|\omega|}\F_f(\omega)$. B-series and LB-series are traditionally considered as time-dependent series of differential operators on $\RR^n$ and $\M$, respectively, given for $\alpha\in \Hn^*$ as
\[{\mathcal B}_{tf}(\alpha) := \sum_{\omega\in \OFC}t^{|\omega|}\alpha(\omega)\F_f(\omega) = \sum_{\omega\in \OFC}\alpha(\omega)\F_{tf}(\omega). \]

In~\cite[Section 4.3.4]{lundervold2009hao} we discuss three different ways the diffeomorphism $\Psi_{hf}$ may be represented in terms of a LB-series, and conversion between these representations. We will summarize these  and show that,  in the present setting, the third of them has an interesting geometric interpretation as an expansion of the development of the flow.

\paragraph{Pullback series (parallel transport).} Find $\alpha\in G(\Hn)$ such that 
\begin{equation}\label{eq:lie}{\mathcal B}_{hf}(\alpha)\tr g = \Psi_{hf}^* g, \quad \mbox{ for all $g\in \Gamma(G\times_{\! H} \g)$,} \end{equation}
where $\Psi_{hf}^*g$ denotes parallel transport of $g$ along $\Psi_{hf}$.

\paragraph{Backward error.} Find $\beta\in \g(\Hn)$ such that $\Psi_{hf}$ is exactly the $t=1$ flow of an autonomous vector field $\widetilde{F}_h = {\rho_{\! M}}_*\opr \widetilde{f}_h$, where
\begin{equation}\widetilde{f}_h = {\mathcal B}_{hf}(\beta)\in \Gamma(G\times_{\! H} \g).\end{equation}

\paragraph{Time dependent equation of Lie type (development).} Find $\gamma\in \g(\Hn)$ such that $y(h)=\Psi_{hf}(y_0)$ is the $t=h$ solution of the following
 time dependent equation of \emph{Lie type}
 \begin{equation}\label{eq:lietype}y'(t) = {\rho_{\! M}}_*\opr\left(\frac{\partial}{\partial t}{\mathcal B}_{tf}(\gamma)(y_0)\right), \qquad y(0)=y_0.\end{equation}

The curve $t\mapsto\frac{\partial}{\partial t}{\mathcal B}_{tf}(\gamma)(y_0)\in G\times_{\! H} \g$ is closely related to the \emph{development} of $y(t)$. Given a curve $t\mapsto y(t)\colon I\subset \RR\mapsto \M$ and a left  moving frame $\sigma\colon \M\rightarrow G$ with Darboux derivative $\omega_\sigma\in \Omega^1(\M,\g)$, the development of $y(t)$ is the curve $\delta\colon I\subset\RR\rightarrow \g$ defined as
$\delta(t):=\omega_\sigma \opr y'(t)$.
\begin{proposition} Let $y(t)\in \M$ be the solution of a differential equation
\[y'(t) = \widetilde{F}(y(t)),\qquad y(0)=y_0.\]
Let $\widetilde{f} = \sigma_*\opr\widetilde{F}\in \Gamma(G\times_{\! H} \g)$ be identified with $\overline{f}\in \Omega^0_H(G,\g)$ as in Proposition~\ref{prop:11cor}. 
Then the development of $y(t)$ with respect to $\sigma$ is given as
\begin{equation}\label{eq:delta}\delta(t) = \overline{f}(\sigma(y(t))).\end{equation}
\end{proposition}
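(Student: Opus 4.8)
The plan is to unwind the definition of the development, $\delta(t) := \omega_\sigma\opr y'(t)$, and reduce the claimed identity to a pointwise statement in the vector bundle $G\times_{\! H}\g$, which then follows directly from the way $\widetilde f$ and $\overline f$ are linked through the map $\sigma_*$ of~(\ref{eq:sigmastar}) and the correspondence of Proposition~\ref{prop:11cor}.

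First I would substitute the differential equation into the definition. By~(\ref{eq:darboux}), $\omega_\sigma = \sigma^*\omega \in \Omega^1(\M,\g)$, so $\delta(t) = \omega_\sigma(y'(t))$ is a well-defined element of $\g$, and since $y'(t) = \widetilde F(y(t))$ we obtain $\delta(t) = \omega_\sigma(\widetilde F(y(t)))$. Hence it suffices to prove the identity $\omega_\sigma(\widetilde F(u)) = \overline f(\sigma(u))$ for all $u$ in the domain of the gauge (which we may assume contains the trajectory), and then evaluate at $u = y(t)$. To establish this, I would compute $\widetilde f(u)$ in two ways. On one hand $\widetilde f = \sigma_*\opr\widetilde F$ with $\sigma_*\colon \TM\rightarrow G\times_{\! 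H}\g$ the map~(\ref{eq:sigmastar}); since $\widetilde F(u)\in T_u\M$ has base point $u$, this gives $\widetilde f(u) = \sigma(u)\times_{\! H}\omega_\sigma(\widetilde F(u))$. On the other hand, the identification of $\widetilde f\in\Gamma(G\times_{\! H}\g)$ with $\overline f\in\Omega^0_H(G,\g)$ in Proposition~\ref{prop:11cor} is precisely $\widetilde f(u) = \sigma(u)\times_{\! H}\overline f(\sigma(u))$, using the same gauge $\sigma$. Comparing the two expressions, and using that a class $g\times_{\! H}v\in G\times_{\! H}\g$ determines its $\g$-component uniquely once the first coordinate is pinned to be exactly $g$, I conclude $\overline f(\sigma(u)) = \omega_\sigma(\widetilde F(u))$, and combining with the first step yields $\delta(t) = \overline f(\sigma(y(t)))$.

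The computation is short, and there is no genuine obstacle: the underlying geometric content — that developing $y(t)$ along the frame $\sigma$ amounts to reading off, in the trivialisation induced by $\sigma$, the $\g$-valued representative of the velocity field — is exactly what~(\ref{eq:sigmastar}) and Proposition~\ref{prop:11cor} encode. The one point requiring care is notational: the symbol $\sigma_*$ is used both for the ordinary differential $T\sigma\colon \TM\rightarrow TG$ (implicit in $\omega_\sigma = \omega\opr\sigma_*$) and for the bundle map $\TM\rightarrow G\times_{\! H}\g$ of~(\ref{eq:sigmastar}); one must check that the gauge $\sigma$ appearing in $\widetilde f = \sigma_*\opr\widetilde F$, in~(\ref{eq:sigmastar}), and in the identification of Proposition~\ref{prop:11cor} is one and the same left partial moving frame, so that the two formulas for $\widetilde f(u)$ are being compared legitimately. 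A brief remark on the local domain of $\sigma$ (so that $\sigma(y(t))$ is defined) would complete the argument.
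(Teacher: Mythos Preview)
Your proof is correct and follows essentially the same route as the paper: compute $\widetilde f(u)$ once via~(\ref{eq:sigmastar}) as $\sigma(u)\times_{\! H}\omega_\sigma(\widetilde F(u))$ and once via Proposition~\ref{prop:11cor} as $\sigma(u)\times_{\! H}\overline f(\sigma(u))$, compare, and then substitute $y'(t)=\widetilde F(y(t))$. The paper's version is just a terser rendition of exactly this argument.
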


\begin{proof} We have $\widetilde{f}(y(t)) = \sigma(y(t))\times_{\! H}\omega_\sigma\opr \widetilde{F}(y(t))$. Thus 
$\overline{f}(\sigma(y(t)) = \omega_\sigma\opr \widetilde{F}(y(t)) = \omega_\sigma \opr y'(t) =: \delta(t)$.
\end{proof}

The solution of~(\ref{eq:delta}) in terms of LB-series is exactly the third characterisation of a flow in~\cite[Section 4.3.4]{lundervold2009hao}. 
Under the identification $\Gamma(G\times_{\! H} \g)\simeq\Omega^0_H(G,\g)$, we find the development as
\[\delta(t) \simeq \frac{\partial}{\partial t}{\mathcal B}_{tf}(\gamma)(\sigma(y_0)).\]
We
end this section with an illustrative example.

\begin{example}The algebraic relationship between $\alpha$, $\beta$ and $\gamma$ is discussed in~\cite{lundervold2009hao}, where explicit formulas relating these three representations are presented:
We find $\alpha$ from $\beta$ by applying $\exp^\opr$ and $\beta$ from $\alpha$ via the eulerian idempotent in $\Hn$. From $\alpha$ we find $\gamma$ by applying the Dynkin idempotent in $\Hn$, and conversely 
$\alpha$ is found from $\gamma$ by a formula involving certain non-commutative Bell polynomials. 
 Consider the  \emph{exact}  flow $\Phi_{tf}$ of the differential equation $y'=F(y)$. In this case, obviously, $\beta = \ab$. From this we can compute explicitly the development $\delta(t)\in\g$ of the solution curve $y(t)\in \M$, see~\cite{lundervold2009hao},
\begin{align*}{\small
\delta(t) & = {\mathcal F}_f\Biggl(
\ab + t\ \aabb + \frac{t^2}{2!}\Biggl(\aababb+\aaabbb\Biggr) + \frac{t^3}{3!}\Biggl(\aabababb+\aaabbabb+2\aabaabbb
+\aaababbb+\aaaabbbb\Biggr)+ \frac{t^4}{4!}\Biggl(\aababababb \\
&  +\aaabbababb+2\aabaabbabb +3\aababaabbb+\aaababbabb+ \aaaabbbabb+3\aaabbaabbb+3\aabaababbb+3\aabaaabbbb+\aaabababbb+\aaaabbabbb\\
& +2\aaabaabbbb+\aaaababbbb+\aaaaabbbbb\Biggr)+\frac{t^5}{5!}\Biggl(\aabababababb+\cdots\Biggr)+\cdots \Biggr)(\sigma(y_0)),
}\end{align*}
where the LB-series is computed in $\Omega^0_H(G,\g)$.

This expression yields the Taylor expansions of invariants of the curve, obtained from the moving frame. 
\end{example}

\section*{Concluding remarks} 
The main goal of this essay has been to expose the importance of post-Lie algebras in the algebraic analysis of flows on Lie groups and homogeneous spaces. We have emphasized both geometric and algebraic aspects of the theory, and shown that Lie--Butcher series are closely related to moving frames. The paper opens up several interesting areas for further investigation. We are convinced that  post-Lie algebraic structures will find applications in many areas also outside numerical analysis, such as stochastic differential equations, control theory and sub-Riemannian geometry. The paper points out different ways of applying moving frame techniques in the choice of isotropy in Lie group integration. Many aspects of this still have to be investigated analytically and computationally.
 
\section*{Acknowledgements} We would like to thank Kurusch Ebrahimi-Fard, Dominique Manchon and  Olivier Verdier  for valuable discussions about the topics of this paper, and  thanks to Jon Eivind Vatne for explaining operads and for pointing us to the work of Bruno Vallette. Finally, many thanks to Matthias Kawski and the anonymous referees for their careful reading, and for their many suggested improvements of the original manuscript.


\bibliography{torsion}

\end{document}